    \Crefname{proposition}{Proposition}{Propositions}
    \Crefname{lemma}{Lemma}{Lemmas}
    \Crefname{remark}{Remark}{Remarks}
    \Crefname{corollary}{Corollary}{Corollaries}
    \Crefname{definition}{Definition}{Definitions}
\numberwithin{equation}{section}
\newtheorem*{corollary*}{Corollary}
\newtheorem*{theorem*}{Theorem}
\newtheorem{theorem}{Theorem}[section]
\newtheorem{proposition}[theorem]{Proposition}
\newtheorem{lemma}[theorem]{Lemma}
\theoremstyle{definition}
\newtheorem{definition}[theorem]{Definition}
\newtheoremstyle{named}{}{}{\itshape}{}{\bfseries}{.}{.5em}{#3}
\theoremstyle{named}
\theoremstyle{remark}
\newtheorem{remark}{Remark}[theorem]
\newtheorem*{remark*}{Remark}
\Crefname{proposition}{Proposition}{Propositions}
\Crefname{lemma}{Lemma}{Lemmas}
\Crefname{remark}{Remark}{Remarks}
\Crefname{corollary}{Corollary}{Corollaries}
\Crefname{definition}{Definition}{Definitions}
\newcommand{\ssum}{\mathop{\sum\sum}}
\renewcommand{\epsilon}{\varepsilon}
\renewcommand{\geq}{\geqslant}
\renewcommand{\leq}{\leqslant}
\title{Central Limit Theorems for random multiplicative functions over function fields}
\author{Declan Hoban}
\author{Jibran Iqbal Shah}
\author{Nadya-Catherine Ismail}
\author{William Verreault}
\author{Asif Zaman}
\address{Department of Mathematics \\
University of California, Berkeley \\
Berkeley, CA, 94720 \\
USA
} 
\email{declan.hoban@berkeley.edu}
\address{Department of Mathematical Sciences \\
Smith College \\
Northampton, MA\\
USA
} 
\email{nismail65@smith.edu}
\address{Department of Mathematics \\
University of Toronto   \\ 
Toronto, ON\\
Canada}
\email{jibraniqbal.shah@mail.utoronto.ca}
\email{william.verreault@utoronto.ca}
\email{asif.zaman@utoronto.ca}
\begin{document}

\vspace*{-4mm}

\begin{abstract}
We provide a sufficient characterization for subsets $\mathcal{A}$ of the polynomial ring $\mathbb{F}_q[t]$ for which partial sums of Steinhaus random multiplicative functions approach a complex standard normal distribution. This extends recent work of Soundararajan and Xu to the function field setting. We apply this characterization to deduce central limit theorems in four cases: polynomials in short intervals, polynomials with few prime factors, shifted primes, and rough polynomials. In doing so, we also establish an explicit Hildebrand inequality for smooth polynomials in short intervals, a function field form of Shiu's theorem for multiplicative functions, and an explicit Chebyshev bound for rough polynomials in short intervals.  
\end{abstract}

\maketitle

\section{Introduction} \label{sec:intro}

\subsection{Number field setting} \label{subsec:intro-nf} Given a sequence $(f(p))_{p}$ indexed by primes $p$ of independent random variables uniformly distributed on the complex unit circle $\mathbb{S}^1 = \{z \in \mathbb{C}: |z| =1\}$, a \textit{Steinhaus random multiplicative function} (over $\mathbb{N}$) is a random variable $f : \mathbb{N} \to \mathbb{C}$ defined by $f(1) = 1$ and
$f(n) = f(p_1)^{a_1} \cdots f(p_r)^{a_r},$
where $n = p_1^{a_1} \cdots p_r^{a_r}$ is the prime factorization of $n \geq 2$.  The study of these random functions originated with Wintner \cite{Wintner-1944} in 1944 and have re-emerged as a very active area of interest in the past decades; see a recent survey by Harper \cite{Harper-2024} for related discussion. 

Given any finite subset $\mathcal{A} \subseteq \mathbb{N}$, the complex random variable 
$|\mathcal{A}|^{-1/2} \sum_{n \in \mathcal{A}} f(n)$
has mean 0 and variance 1. Inspired by the central limit theorem (CLT), one might ask: does this random sum converge in distribution to the standard complex normal $\mathcal{CN}(0,1)$ as $|\mathcal{A}| \to \infty$? The answer depends on the subsets $\mathcal{A}$. Central limit theorems have been established in many cases such as  integers   with few prime factors, short intervals, and polynomial values. More precisely, denoting $\omega(n)$ as the number of distinct primes dividing $n$, the random sum over $\mathcal{A}$ satisfies a CLT for  
\begin{itemize}
	\item $\mathcal{A} = \{ 1 \leq n \leq x : \omega(n) = k\}$ with $k = o(\log \log x)$ as $x \to \infty$,
	\item $\mathcal{A} = \{ x-y \leq n \leq x \}$ with $y \leq x/(\log x)^{2\log 2-1+\epsilon}$ as $x \to \infty$,
	\item $\mathcal{A} = \{ Q(n) : 1 \leq n \leq x \}$ as $x \to \infty$, where $Q \in \mathbb{Z}[t]$ is not of the form $a (t+b)^c$ for $a,b,c \in \mathbb{Z}$. 
\end{itemize}
These results are respectively due to Harper \cite{Harper-2013},  Soundararajan--Xu \cite{SoundXu-2023}, and Klurman--Shkredov--Xu \cite{RandomChowla}. On the other hand, Harper   surprisingly showed that the natural choice 
$
\mathcal{A} = [1,x] \cap \mathbb{N}
$
does \textit{not} converge to the standard complex normal \cite{Harper-2013} and, in fact, the sum converges to zero in distribution \cite{Harper-2020}; see Gorodetsky--Wong \cite{GorodetskyWong-2025-2, GorodetskyWong-2025} for recent striking progress and Atherfold--Najnudel \cite{AtherfoldNajnudel-2025}  for a closely related breakthrough. A natural question, then, is over which subsets $\mathcal{A}$ does such a central limit theorem hold?   Soundararajan and Xu \cite{SoundXu-2023} recently established a flexible criterion. 

\hypertarget{customlabel}{\label{thm:SoundXu-integer}}
\begin{theorem*}[Soundararajan--Xu] 
    Let $x \geq 10$ be large. Let $\mathcal{A}=\mathcal{A}_x$ be a subset of $[1,x] \cap \mathbb{N}$ with size 
    \[
    |\mathcal{A}|\geq x \exp(-\tfrac13\sqrt{\log x\log\log x}).
    \]
   Assume there exists a subset $\mathcal{S}=\mathcal{S}_x\subset\mathcal{A}_x$ with size $|\mathcal{S}|=(1+o(1))|\mathcal{A}|$ as $x \to \infty$ satisfying 
    \begin{align*}
        |\{(s_1,s_2,s_3,s_4)\in\mathcal{S}^4:\,\,\, s_1s_2=s_3s_4\}|=(2+o(1))|\mathcal{S}|^2 \quad \text{ as } x \to \infty. 
    \end{align*}
   If $f$ is a Steinhaus random multiplicative function over $\mathbb{Z}$, then  
    \begin{align*}
        \frac{1}{\sqrt{|\mathcal{A}|}}\sum_{n\in\mathcal{A}}f(n) \xrightarrow{d} \mathcal{CN}(0,1) \quad \text{ as } x \to \infty. 
    \end{align*}
\end{theorem*}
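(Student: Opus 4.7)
The plan is to apply the method of moments. Since the complex standard normal $\mathcal{CN}(0,1)$ is determined by its mixed moments $\mathbb{E}[Z^k\overline{Z}^\ell] = k!\,\mathbf{1}[k=\ell]$, it suffices to show that for every fixed $k, \ell \geq 0$,
\[
\mathbb{E}\big[S^k \overline{S}^{\ell}\big] \longrightarrow k!\,\mathbf{1}[k = \ell], \qquad S := |\mathcal{A}|^{-1/2} \sum_{n \in \mathcal{A}} f(n).
\]
The Steinhaus orthogonality $\mathbb{E}[\prod_i f(n_i)\,\overline{\prod_j f(m_j)}] = \mathbf{1}[\prod_i n_i = \prod_j m_j]$ (as integer equality, by unique factorization) reduces this to a counting problem: the moment equals $|\mathcal{A}|^{-(k+\ell)/2} N_{k,\ell}(\mathcal{A})$, where $N_{k,\ell}(\mathcal{A}) := \#\{(\vec n, \vec m) \in \mathcal{A}^{k+\ell} : \prod_i n_i = \prod_j m_j\}$. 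Since $|\mathcal{S}| = (1+o(1))|\mathcal{A}|$, a direct $L^2$ argument allows me to replace $\mathcal{A}$ by $\mathcal{S}$ throughout without changing the limiting distribution.

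I would then split $N_{k,\ell}(\mathcal{S})$ into diagonal and off-diagonal parts. The \emph{diagonal} tuples — those for which $\vec m$ is a permutation of $\vec n$, which is only possible when $k = \ell$ — contribute exactly $k!\,|\mathcal{S}|^k + O_k(|\mathcal{S}|^{k-1})$ by inclusion–exclusion over coincidences $n_i = n_{i'}$, and after normalization this produces the desired Gaussian moment $k!$. The remaining task is to show that the off-diagonal count is $o(|\mathcal{S}|^{(k+\ell)/2})$ for every $(k, \ell)$, where every solution is off-diagonal when $k \neq \ell$. The base case $(k,\ell) = (2,2)$ is precisely the hypothesis. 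For $k \neq \ell$, the constraint $\prod n_i = \prod m_j$ with factors in $[1,x]$ forces the longer side to consist of atypically smooth factors; the size assumption $|\mathcal{A}| \geq x \exp(-\tfrac13\sqrt{\log x \log\log x})$ combined with de Bruijn-type smooth-number estimates then yields the required decay.

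The main obstacle is the higher off-diagonal bound for $(k,k)$ with $k \geq 3$. My plan here is to reduce to the $(2,2)$ hypothesis by isolating, within each off-diagonal tuple, a minimal sub-pair on each side whose products agree but which is not itself a permutation. Cancelling the ``trivially matched'' coordinates and conditioning on them should control ``2-out-of-$k$'' configurations by at most $O_k(|\mathcal{S}|^{k-2}) \cdot N^{\mathrm{off}}_{2,2}(\mathcal{S}) = o(|\mathcal{S}|^k)$; the remaining configurations, whose minimal sub-relation has length $\geq 3$, are rarer and I would handle them via divisor-function bounds $d_k(m) \ll_\epsilon m^\epsilon$ together with the size lower bound on $|\mathcal{A}|$ to control the number of $m$ with many $\mathcal{S}$-representations. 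Making this reduction precise — in particular, controlling ``partially degenerate'' configurations where some coordinates repeat — is the most delicate step, and is where I expect the specific quantitative threshold $\exp(-\tfrac13\sqrt{\log x\log\log x})$ to enter.
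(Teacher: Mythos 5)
There is a genuine gap, and it is structural rather than technical: the method of moments cannot prove this theorem from the stated hypotheses. The hypotheses control only the fourth moment (the multiplicative energy $N_{2,2}$), and say nothing about $N_{k,k}$ for $k\geq 3$. In fact, in the main applications (e.g.\ short intervals $[x,x+y]$ with $y=x/(\log x)^{c}$ for $c$ just above $2\log 2-1$, and integers with $k=o(\log\log x)$ prime factors) it is known that the CLT holds while the higher moments $\mathbb{E}|S|^{2k}$ \emph{diverge} for large $k$ — convergence in distribution holds without uniform integrability of higher powers. So your goal of showing $N^{\mathrm{off}}_{k,k}(\mathcal{S})=o(|\mathcal{S}|^{k})$ for all $k$ is not just underivable from the hypotheses; it is false for sets that satisfy them. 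Your proposed reduction to the $(2,2)$ case also breaks down on its own terms: a solution of $n_1n_2n_3=m_1m_2m_3$ need not contain any sub-pair with $n_in_j=m_km_l$ (take $n_1=ab,\ n_2=cd,\ n_3=ef$ against $m_1=ac,\ m_2=be,\ m_3=df$), and these irreducible length-$3$ relations are exactly the dominant contribution that makes the sixth moment blow up. The fallback via $d_k(m)\ll_\epsilon m^{\epsilon}$ gives only $N_{k,k}\ll |\mathcal{S}|^{k}x^{\epsilon}$, and $x^{\epsilon}$ swamps any saving when $|\mathcal{S}|$ is as small as $x\exp(-\tfrac13\sqrt{\log x\log\log x})$.

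The actual proof (Soundararajan--Xu over $\mathbb{Z}$, and the function field analogue in this paper) avoids moments entirely. One decomposes $S=\sum_p Z_p$, where $Z_p$ collects the terms $n\in\mathcal{S}$ whose largest prime factor is $p$; this is a martingale difference sequence with respect to the filtration generated by $\{f(q):q<p\}$, since $\mathbb{E}[f(p)^m]=0$. McLeish's martingale CLT (\cref{thm:mcleishclt}) then gives Gaussian convergence with error terms involving only $\sum_p\mathbb{E}|Z_p|^4$, the conditional-variance fluctuation $\mathbb{E}(\sum_p|Z_p|^2-1)^2$, and $\sum_{p,q}\mathbb{E}[Z_p^2\overline{Z_q}^2]$ — all of which reduce to fourth-moment counts controlled by the multiplicative energy hypothesis, together with the requirement that no single fibre $\{n\in\mathcal{S}:P^+(n)=p\}$ carries a positive proportion of $\mathcal{S}$. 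That last point is where the size threshold $|\mathcal{A}|\geq x\exp(-\tfrac13\sqrt{\log x\log\log x})$ and smooth-number estimates (Hildebrand-type bounds) enter, not in any higher-moment computation. If you want a proof, you should abandon moments and set up this martingale decomposition.
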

\begin{remark*}
    Here and throughout, the notation $\,\xrightarrow{d}\,$ indicates convergence in distribution, and $\mathcal{CN}(0,1)$ is the standard complex normal with mean 0 and variance 1.
\end{remark*}

The principal goal of this paper is to extend this central limit theorem criterion and some applications from the \textit{number field setting} over $\mathbb{Z}$ to the \textit{function field setting} over the polynomial ring $\mathbb{F}_q[t]$, where $q$ is a prime power and $\mathbb{F}_q$ is the unique finite field of $q$ elements (see, e.g., \cite{Gran-Harp-Sound-2015} for an introduction to multiplicative functions over $\mathbb{F}_q[t]$). 

\subsection{Function field setting} \label{subsec:intro-ff} Let $\mathcal{M}$ be the set of monic polynomials of $\mathbb{F}_q[t]$. Given a sequence  $(f(P))_{P}$ indexed by monic irreducible polynomials $P$ of independent random variables  uniformly distributed on $\mathbb{S}^1$, a \textit{Steinhaus random multiplicative function} (over $\mathbb{F}_q[t]$) is a random variable $f :  \mathcal{M} \to \mathbb{C}$ defined by $f(1) = 1$ and 
\begin{align*}
 	f(F)= f(P_1)^{a_1} \cdots f(P_r)^{a_r},
\end{align*}
where $F = P_1^{a_1} \cdots P_r^{a_r}$ is the unique factorization of $F \in \mathcal{M}$ into powers of distinct monic irreducibles $P_1,\dots,P_r$.  Given any finite subset $\mathcal{A} \subseteq \mathcal{M}$, the complex random variable
\begin{align}\label{eq:partialsums}
    \frac{1}{\sqrt{|\mathcal{A}|}} \sum_{F\in\mathcal{A}}f(F)
\end{align}    
has mean 0 and variance 1, so the same question arises: does this random sum converge in distribution to the standard complex normal $\mathcal{CN}(0,1)$ as $|\mathcal{A}| \to \infty$?  This question has strong parallels with the number field setting, but there are fewer examples over function fields. 

Corresponding to Harper's result \cite{Harper-2013,Harper-2020} over $\mathbb{Z}$ from \S\ref{subsec:intro-nf}, a theorem of Soundararajan--Zaman \cite{SoundZaman-2022} implies that the random sum \eqref{eq:partialsums} with the natural choice
	\[
	\mathcal{A} = \mathcal{M}_N := \{ F \in \mathcal{M} : \deg F = N \}
	\]
	does \textit{not} converge in distribution to the standard normal (and in fact, converges to zero) when taking first $q \to \infty$ and then $N \to \infty$. The moments of this example have recently been computed by Hofmann--Hoganson--Menon--Verreault--Zaman \cite{FUSRP-2023}. A central limit theorem was established by Aggarwal--Subedi--Verreault--Zaman--Zheng \cite{FUSRP2020} for Rademacher random multiplicative functions  over polynomials with few irreducible factors, namely when 
\[
\mathcal{A} = \{ F \in \mathcal{M}_N : \omega(F) = k\} \text{ with  } k = o(\log N)  \text{ as }N \to \infty.
\]
Here $\omega(F)$ is the number of distinct monic irreducible factors of $F$. This exactly matches Harper  \cite{Harper-2013} in \S\ref{subsec:intro-nf}. Otherwise, as far as we are aware, there are no other examples of CLTs over $\mathbb{F}_q[t]$.

\subsection{Results} Our main result is a function field analogue of Soundararajan and Xu's theorem. 
\begin{theorem}\label{thm:main-intro} Let $q$ be a prime power and $N \geq 1$ be an integer. Let $\mathcal{A} = \mathcal{A}_{N,q}$ be a subset of monic polynomials of  $\mathbb{F}_q[t]$ with degree $N$ with size
\begin{equation} \label{main-minimum-size} 
|\mathcal{A}| \gg  q^N \exp\big(-\tfrac{1}{3}\sqrt{N \log q}\big).
\end{equation}
Assume there exists a subset $\mathcal{S}=\mathcal{S}_{N,q} \subset \mathcal{A}$ satisfying 
\begin{equation} \label{main-asymptotic-subset}
	|\mathcal{S}| = (1+o(1)) |\mathcal{A}| \quad \text{ as } q^N \to \infty. 
\end{equation} 
and also  
\begin{equation} \label{main-mult-energy}
|\{(F_1,F_2,G_1,G_2)\in \mathcal{S}^4:\,\,\,F_1F_2=G_1G_2\}|=(2+o(1))|\mathcal{S}|^2 \quad \text{ as } q^N \to \infty. 
\end{equation}
If $f$ is a Steinhaus multiplicative function over $\mathbb{F}_q[t]$ then 
\begin{equation}   \label{eqn:CLT} \tag{$\star$}
\frac{1}{\sqrt{|\mathcal{A}|}} \sum_{F \in \mathcal{A}} f(F)  \xrightarrow{d} \mathcal{CN}(0,1) \quad \text{ as } q^N \to \infty. 
\end{equation}
\end{theorem}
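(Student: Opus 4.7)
The plan is to apply the method of moments. Since $\mathcal{CN}(0,1)$ is determined by its joint moments $\mathbb{E}[Z^a \overline{Z}^b] = \delta_{a,b}\cdot a!$, it suffices to prove, for each fixed pair of non-negative integers $(a,b)$, that
\[
\mathbb{E}[X_\mathcal{A}^a\,\overline{X_\mathcal{A}^b}]\longrightarrow\delta_{a,b}\cdot a!\quad\text{as }q^N\to\infty,\qquad X_\mathcal{A}:=\frac{1}{\sqrt{|\mathcal{A}|}}\sum_{F\in\mathcal{A}}f(F).
\]
Because $(f(P))_P$ are i.i.d.\ uniform on $\mathbb{S}^1$, the Steinhaus orthogonality $\mathbb{E}[f(F_1)\cdots f(F_a)\overline{f(G_1)\cdots f(G_b)}] = \mathbbm{1}[F_1\cdots F_a = G_1\cdots G_b]$ reduces this moment to the combinatorial count $|\mathcal{A}|^{-(a+b)/2}M_{a,b}(\mathcal{A})$, where
\[
M_{a,b}(\mathcal{B}):=\bigl|\{(F_i,G_j)\in\mathcal{B}^{a+b}:F_1\cdots F_a=G_1\cdots G_b\}\bigr|.
\]

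I would first transfer from $\mathcal{A}$ to $\mathcal{S}$. Since $|\mathcal{A}\setminus\mathcal{S}|=o(|\mathcal{A}|)$, the tuples contributing to the difference $M_{a,b}(\mathcal{A})-M_{a,b}(\mathcal{S})$ have at least one coordinate outside $\mathcal{S}$; Cauchy--Schwarz together with the divisor bound $d_k(F)\ll_k q^{\epsilon N}$ for $\deg F\leq kN$ yields error $o(|\mathcal{A}|^{(a+b)/2})$, which is absorbed by the lower bound \eqref{main-minimum-size}. For the $(k,k)$ moment, the \emph{diagonal} configurations where $(G_1,\ldots,G_k)$ is a permutation of $(F_1,\ldots,F_k)$ contribute exactly $(k!+o(1))|\mathcal{S}|^k$: the $(1+o(1))|\mathcal{S}|^k$ tuples with distinct coordinates each admit $k!$ permutations. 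Mixed moments $(a\neq b)$ are purely off-diagonal, and one may already handle the easy cases $\min(a,b)\leq 1$ by a direct divisor bound $M_{1,b}(\mathcal{S})\leq|\mathcal{S}|\cdot\max_F d_b(F)\ll|\mathcal{S}|q^{\epsilon N}=o(|\mathcal{S}|^{(1+b)/2})$ when $b\geq 2$, again using \eqref{main-minimum-size}.

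The heart of the argument, and the main obstacle, is showing that the \emph{non-permutation} contribution to $M_{a,b}(\mathcal{S})$ is $o(|\mathcal{S}|^{(a+b)/2})$ for the remaining cases with $a,b\geq 2$. For $(a,b)=(2,2)$ this is precisely the hypothesis \eqref{main-mult-energy}, which says the only non-negligible solutions to $F_1 F_2 = G_1 G_2$ in $\mathcal{S}^4$ come from the trivial pairings. For higher $(a,b)$, my plan is to iterate: condition on the last coordinates $F_a$ and $G_b$. If $F_a = G_b$, the equation reduces to $M_{a-1,b-1}(\mathcal{S})$, to which the inductive hypothesis applies. Otherwise, condition on four further coordinates to arrive at a quartic equation of the form $F_1 F_2 G_b = G_1 G_2 F_a\cdot H$ for some fixed polynomial $H$ depending on the conditioned data, and bound the resulting count via a shifted version of the quartic energy, extracted from \eqref{main-mult-energy} combined with the multiplicative structure of $\mathcal{S}\cdot\mathcal{S}$. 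The delicate technical point is that \eqref{main-mult-energy} directly controls only the unshifted case $H=1$; extracting uniform bounds across shifts while keeping the compounded divisor-function losses $q^{o(N)}$ negligible (precisely what \eqref{main-minimum-size} is calibrated to permit) is the main hurdle, and I expect the explicit Hildebrand inequality for smooth polynomials announced in the abstract to be the key tool for controlling configurations involving many smooth coordinates, where the naive divisor estimate is too coarse.
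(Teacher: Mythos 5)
Your proposal rests on the method of moments, but the hypotheses of the theorem are not strong enough to make that method work, and this is not merely a technical hurdle. The condition \eqref{main-mult-energy} controls only the fourth moment: it says the $4$-fold multiplicative energy of $\mathcal{S}$ is asymptotically diagonal. The step you identify as "the heart of the argument" --- that the non-permutation contribution to $M_{a,b}(\mathcal{S})$ is $o(|\mathcal{S}|^{(a+b)/2})$ for all $a,b$ --- is genuinely false for some families satisfying the hypotheses. For instance, in the short-interval application (\cref{thm:CLT-intervals}(b)) with $q^{h+1}$ just below the threshold $q^N/N^{2\log 2-1}$, the $4$-fold energy is asymptotically trivial, but the count of solutions to $F_1F_2F_3=G_1G_2G_3$ in $\mathcal{I}(K,h)^6$ exceeds the diagonal contribution by a positive power of $N$ (the $2k$-fold energies have strictly increasing logarithmic thresholds in $k$, mirroring the $x^k(\log x)^{(k-1)^2}$ growth of unrestricted $2k$-th moments over $\mathbb{Z}$). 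The CLT still holds there, but the normalized sixth moment diverges; convergence in distribution does not require convergence of moments, and here it genuinely fails. No induction reducing $M_{a,b}$ to the quartic case can repair this, because the statement being induced is false. Your proposed use of the Hildebrand inequality is also misplaced: in the paper it controls the thinness of a filtration, not smooth configurations in higher moments.

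The paper's actual route avoids all higher moments. It orders the primes by a degree-respecting total order $\prec$, partitions $\mathcal{S}$ into blocks $\mathcal{S}_P=\{F:P^+_\prec(F)=P\}$, and applies a complex-valued martingale CLT of McLeish (\cref{thm:mcleishclt}) to the martingale difference sequence $Z_P=|\mathcal{S}|^{-1/2}\sum_{F\in\mathcal{S}_P}f(F)$. The error terms in McLeish's theorem require only: (i) $\sum_P\mathbb{E}[|Z_P|^4]=o(1)$ and control of $\mathbb{E}[(\sum_P|Z_P|^2-1)^2]$, both of which follow from the fourth-moment hypothesis \eqref{main-mult-energy} together with the bound $\max_P|\mathcal{S}_P|=o(|\mathcal{A}|)$; and (ii) the latter bound is exactly where \eqref{main-minimum-size} and the new smooth-polynomial estimate (\cref{lem:filtration}, proved via the Hildebrand inequality and a Rankin bound) enter, giving $|\mathcal{S}_P|\ll q^N\exp(-\tfrac12\sqrt{N\log q})$. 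If you want to salvage a moments-style argument you would have to strengthen the hypotheses to asymptotic triviality of all $2k$-fold energies, which would exclude several of the paper's applications.
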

 
\begin{remark} \label[remark]{rem:limit-convention}
Here and throughout, any limit of $q$ or $N$ permits the parameters  to vary and depend on each other arbitrarily unless explicitly stated otherwise.  For example, the limit $q^N \to \infty$ includes the cases $q$ fixed with $N \to \infty$, $N$ fixed with $q \to \infty$, and more. 
\end{remark}

As remarked in \cite{SoundXu-2023}, the size constraints on $|\mathcal{A}|$ and $|\mathcal{S}|$ are mild, but \eqref{main-mult-energy} is  pivotal. This condition  concerns the \textit{multiplicative energy} $\mathsf{E}_{\times}(\mathcal{S})$ of a finite set $\mathcal{S} \subseteq \mathcal{M}$, defined by
\begin{equation} \label{eqn:mult-energy}
    \mathsf{E}_{\times}(\mathcal{S}) := |\{(F_1,F_2,G_1,G_2)\in \mathcal{S}^4:F_1F_2=G_1G_2\}| = \mathbb{E} \Big| \sum_{F \in \mathcal{S} } f(F) \Big|^4.
\end{equation}
We say a family of sets $\mathcal{S}$ have \textit{asymptotically trivial multiplicative energy} when
\begin{equation}\label{trivialenergy}
\mathsf{E}_{\times}(\mathcal{S}) =(2+o(1))|\mathcal{S}|^2 \quad \text{ as } |\mathcal{S}| \to \infty.
\end{equation}
The trivial (or diagonal) solutions $(F_1, F_2, F_1, F_2)$ and $(F_1, F_2, F_2, F_1)$ for \eqref{eqn:mult-energy} always exist, so it is immediate that $\mathsf{E}_{\times}(\mathcal{S}) \geq 2|\mathcal{S}|^2 - |\mathcal{S}|$. Statement \eqref{trivialenergy} therefore requires that the multiplicative energy is dominated by the trivial solutions in the limit. 

The proof of \cref{thm:main-intro} largely follows Soundararajan--Xu by invoking a central limit theorem for martingale difference sequences due to McLeish (\cref{thm:mcleishclt}). This approach was utilized in \cite{FUSRP2020} for function fields but they applied a coarse filtration depending only on
\begin{equation} \label{def:max-degree}
P^+(F) := \max\{ \deg P : P \mid F, P \text{ monic irreducible } \},
\end{equation}
the maximum degree of any monic irreducible dividing $F \in \mathcal{M}$. We require a more refined filtration for each individual prime $P$, necessitating a new estimate for smooth polynomials in short intervals (\cref{lem:filtration}) in addition to a related estimate of Gorodetsky \cite{OfirSmoothPolys23}; see \cref{rem:martingale} for details. 

From \cref{thm:main-intro}, we deduce  four central limit theorems: 
\begin{itemize}
	\item (\cref{thm:CLT-intervals})  CLT for short intervals 	
	\item (\cref{thm:CLT-kprimes}) CLT for restricted number of prime factors 
	\item (\cref{thm:CLT-shiftedprimes}) CLT for shifted primes 
	\item (\cref{thm:CLT-rough}) CLT for rough polynomials 
\end{itemize}
Each application requires a variety of function field number theoretic lemmas, many of which we could not find in the literature with sufficient uniformity. We established several new results which we expect to be of independent interest: a uniform Hildebrand bound for smooth polynomials in short intervals (\cref{prop:intervalshift}), a uniform Shiu's theorem for multiplicative functions (\cref{thm:shiu}), and a uniform Chebyshev bound for rough polynomials in short intervals (\cref{lem:chebyshev}). See  \S\ref{sec:CLTs} for details on these four central limit theorems and number theoretic lemmas.

\subsection*{Organization} \cref{sec:CLTs} describes the CLT applications deduced from \cref{thm:main-intro}  and other number theoretic results over $\mathbb{F}_q[t]$. \cref{sec:prelim} collects our notation and basic terminology regarding function field intervals, and then presents a  simplified version of the Selberg sieve for function fields. \cref{sec:smoothroughpolys} develops novel bounds on the numbers of smooth and rough polynomials in an interval (\cref{prop:intervalshift,lem:chebyshev}). \cref{sec:clts} establishes \cref{thm:main-intro} by applying these lemmas. 

The remaining subsections are dedicated to deducing the four central limit theorems described in \cref{sec:CLTs}.   \cref{sec:shiu} prepares a function field analogue of Shiu's theorem (\cref{thm:shiu}), so that \cref{sec:interval-CLT} can establish the short interval CLT (\cref{thm:CLT-intervals}). \cref{sec:almostprimesCLT,sec:shiftedprimesCLT} respectively establish the few prime factors CLT (\cref{thm:CLT-kprimes}) and shifted prime CLT (\cref{thm:CLT-shiftedprimes}). Finally, \cref{sec:roughpolyCLT} presents a short proof of the rough polynomial CLT (\cref{thm:CLT-rough}).

\subsection*{Acknowledgments} The authors thank Lior Bary-Soroker, Ofir Gorodetsky, Dimitris Koukoulopoulos, and Max Xu for helpful discussions and references. This research was conducted as part of the 2024 Fields Undergraduate Summer Research Program, and the authors are deeply grateful for the Fields Institute's support. AZ was partially supported by NSERC grant RGPIN-2022-04982.

\section{Central limit theorems over $\mathbb{F}_q[t]$} \label{sec:CLTs}

As promised, we describe the four central limit theorems deduced from \cref{thm:main-intro} and ingredients behind their proofs, including new estimates which we establish. Unless explicitly stated otherwise,  the finite field $\mathbb{F}_q$ and degree $N$ are arbitrary, and all implied constants are absolute.

\subsection{Short intervals}\label{subsec:CLTs-intervals}  Chatterjee--Soundararajan \cite{Chatt-Sound-2012} first  established a CLT in the number field setting for Steinhaus random multiplicative functions  in short intervals with the choice 
\begin{equation} \label{eqn:CLT-short-intervals-NF}
\mathcal{A} = [x,x+y] \cap \mathbb{Z} \quad \text{ provided $y \leq x/(\log x)^{c}$ as $x \to \infty$  }
\end{equation}
for any fixed $c > 1$. Soundararajan--Xu \cite{SoundXu-2023} improved this result to allow any fixed $c > 2\log 2-1$. Our first application extends these results to the function field setting; the proof appears in \S\ref{sec:interval-CLT}. 

\begin{theorem}\label{thm:CLT-intervals}
    Let $q \geq 2$ be a prime power and let $N\geq 3$ be an integer. Let $K \in \mathcal{M}_N$  and $1 \leq h \leq N-2$. If $f$ is a Steinhaus random multiplicative function over $\mathbb{F}_q[t]$, then \eqref{eqn:CLT} holds for 
    \[
    \mathcal{A} = \mathcal{I}(K,h) := \{ F \in \mathcal{M}_N : \deg(F-K) \leq h\}
    \]
    in both of the following cases:
    \begin{enumerate}[label=(\alph*)]
    	\item As $q^{h} \to \infty$ provided $q^{h+1} = o(q^N/N)$. 
    	\item As $h \to \infty$ provided $q^{h+1} = o(q^N/N^c)$ for some fixed $c > 2\log 2-1$. 
    \end{enumerate}

\end{theorem}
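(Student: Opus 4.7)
The plan is to deduce \cref{thm:CLT-intervals} from \cref{thm:main-intro} applied with $\mathcal{A} = \mathcal{I}(K,h)$, noting that $|\mathcal{A}| = q^{h+1}$ since $h \leq N-2$. The size hypothesis \eqref{main-minimum-size} should hold under either (a) or (b): the deficit $q^{N-h-1}$ is at most a polynomial in $N$ under the stated conditions, which is negligible compared to $\exp(\tfrac13\sqrt{N\log q})$ in every asymptotic regime with $q^N \to \infty$. For the subset required in \cref{thm:main-intro}, I would simply take $\mathcal{S} = \mathcal{A}$, so that \eqref{main-asymptotic-subset} is immediate.

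The heart of the argument is verifying the multiplicative energy bound \eqref{main-mult-energy}. After removing the $2|\mathcal{S}|^2 - |\mathcal{S}|$ diagonal contributions, every non-trivial quadruple $(F_1, F_2, G_1, G_2) \in \mathcal{S}^4$ satisfying $F_1 F_2 = G_1 G_2$ admits a unique parametrization $F_1 = ab$, $G_1 = ac$, $F_2 = ce$, $G_2 = be$ with $a = \gcd(F_1, G_1)$ monic, $\gcd(b, c) = 1$, and $a \neq e$. Degree equality forces $\deg a = \deg e = d$ and $\deg b = \deg c = N - d$, so the off-diagonal count becomes
\[
\sum_{d=1}^{N-1} \ \sum_{\substack{a, e \in \mathcal{M}_d \\ a \neq e}} \ \sum_{\substack{b, c \in \mathcal{M}_{N-d} \\ \gcd(b,c) = 1}} \mathbbm{1}\bigl[ab, ac, be, ce \in \mathcal{I}(K,h)\bigr].
\]
A useful simplification emerges by subtracting constraints: for $ab, be \in \mathcal{I}(K,h)$ to both hold, one needs $\deg((a-e)b) \leq h$, i.e., $\deg(a-e) \leq h - (N-d)$. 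This already forces $d \geq N - h$ and permits only $q^{h+d-N+1}$ valid choices of $e$ for each $a$.

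Bounding the remaining sum is the technical core. I would group terms by $d$ and invoke duality: for fixed $a$, the count of $b \in \mathcal{M}_{N-d}$ with $ab \in \mathcal{I}(K,h)$ equals the number of multiples of $a$ in $\mathcal{I}(K,h)$, which has typical size $\sim q^{h+1-d}$ when $d \leq h+1$ and degenerates sharply for larger $d$. Summing these counts across $a$ reduces to bounds on divisor-type sums over short intervals, handled by the uniform Shiu theorem (\cref{thm:shiu}). The delicate contribution comes from $a$ that are atypically smooth --- for such $a$, divisor counts alone are insufficient, and one instead controls the relevant sum by the explicit Hildebrand bound for smooth polynomials in short intervals (\cref{prop:intervalshift}). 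This is precisely the refinement that distinguishes case (a) from case (b): a crude divisor-type bound yields only the threshold $q^{h+1} \ll q^N/N$, while the Hildebrand saving of roughly $N^{2\log 2 - 1}$ pushes past this barrier, in direct analogy with Soundararajan--Xu's improvement over Chatterjee--Soundararajan.

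The main obstacle is maintaining uniformity in all parameters $q, N, h, K$ while simultaneously controlling the four coupled interval constraints; the estimates \cref{thm:shiu} and \cref{prop:intervalshift} are tailored precisely to this purpose. Once these combine to show the non-diagonal contribution to $\mathsf{E}_\times(\mathcal{S})$ is $o(|\mathcal{S}|^2)$, an application of \cref{thm:main-intro} delivers \eqref{eqn:CLT}.
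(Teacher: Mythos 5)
Your overall strategy (GCD parametrization, the observation that $F_1-G_2=b(a-e)$ forces $\deg(a-e)\leq h-(N-d)$, and reduction to divisor-type counts over interval quotients) matches the paper's, but there are two genuine gaps.

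First, you cannot apply \cref{thm:main-intro} with $\mathcal{A}=\mathcal{I}(K,h)$: the size hypothesis \eqref{main-minimum-size} requires $q^{N-h-1}\ll\exp(\tfrac13\sqrt{N\log q})$, and you have the direction of the constraint backwards. Conditions (a) and (b) impose a \emph{lower} bound on the deficit ($q^{N-h-1}\gg N$ or $\gg N^c$), not an upper bound; the interval may be far shorter than $q^N\exp(-\tfrac13\sqrt{N\log q})$ (e.g.\ $h=\sqrt{N}$ with $q$ fixed, or $h$ fixed with $q\to\infty$ in case (a)). The paper instead invokes \cref{thm:main} directly and verifies its condition (iii) by applying \cref{lem:filtration} \emph{to the interval itself}: $|\mathcal{S}_Q\cap\mathcal{I}(K,h)|\ll q^{h+1}\exp(-\tfrac12\sqrt{(h+1)\log q})=o(|\mathcal{A}|)$ as $q^h\to\infty$. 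This is where \cref{prop:intervalshift} actually enters the argument --- inside \cref{lem:filtration} --- not in the energy bound as you propose.

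Second, your choice $\mathcal{S}=\mathcal{A}$ cannot yield case (b). For the full interval the off-diagonal multiplicative energy is genuinely of order $h\,q^{3h-N+3}$ (the paper's \cref{prop:short-interval-mult-energy} is nearly sharp when $h=N-1$, by the exact computation cited in the remark following it), so with $\mathcal{S}=\mathcal{A}$ one can only reach the threshold $q^{h+1}=o(q^N/N)$ of case (a). The $N^{2\log 2-1}$ improvement requires passing to the subset $\mathcal{S}=\{F\in\mathcal{I}(K,h):\Omega(F)\leq(1+\epsilon)\log N\}$ (with $|\mathcal{A}\setminus\mathcal{S}|=o(|\mathcal{A}|)$ by a Hardy--Ramanujan estimate deduced from \cref{thm:shiu}), inserting the weight $2^{2m-\Omega(GABH)}\geq 1$ with $m=(1+\epsilon)\log N$, and applying \cref{thm:shiu} to $g=2^{-\Omega}$ on each of the three interval quotients; this produces the saving $d^{-1/2}(N-d)^{-1}$ per term in place of the trivial count. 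Your suggestion to handle ``atypically smooth'' divisors via the Hildebrand bound does not supply this mechanism, and note also that case (b) intrinsically requires $h\to\infty$ (not merely $q^h\to\infty$) precisely because the few-prime-factors restriction is only effective as $N\to\infty$.
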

\begin{remark} \label[remark]{rem:parameter-convention}
	Recall the sets $\mathcal{A} = \mathcal{A}_{N,q}$  are always chosen for each pair $(N,q)$, so the chosen polynomial $K$ and integer $h$ are allowed to depend on the parameters $N$ and $q$.  Unless explicitly stated otherwise, we will always allow auxiliary parameters to depend on $N$ and $q$.  
\end{remark}

For a given polynomial $K \in \mathcal{M}$ and integer $h \geq 1$, we refer to the set $\mathcal{I}(K,h)$ defined in \cref{thm:CLT-intervals} (and also  \S\ref{sec:intervals}) as the \textit{interval centred at $K$ of radius $h$}.  Observe that
\[
|\mathcal{I}(K,h)| = q^{h+1} \quad \text{ and } \quad |\mathcal{I}(K,N-1)| = |\mathcal{M}_N| = q^{N}
\]
when $\deg K = N$. Thus, $\mathcal{I}(K,h)$ is a ``short'' interval when $h \leq N-2$ and a ``long'' interval when $h = N-1$. By comparing with \eqref{eqn:CLT-short-intervals-NF}, these observations illustrate that \cref{thm:CLT-intervals}(a) and (b) closely parallel \cite{Chatt-Sound-2012} and \cite{SoundXu-2023} respectively. Note (a) permits $h$ to be fixed with $q \to \infty$ whereas (b) does not. This discrepancy occurs because, similar to \cite{SoundXu-2023}, the argument leading to (b) crucially  relies on the fact that, for any fixed $\epsilon > 0$, the proportion of monic polynomials of degree $N$ with $> (1+\epsilon) \log N$ irreducible factors decays to $0\%$ as $N \to \infty$. This same proportion does not necessarily tend to $0\%$ as $q \to \infty$ with $N$ fixed (cf. \cref{lem:short-interval-hardy-ramanujan}). 

\cref{thm:CLT-intervals} may be viewed as tight when $N$ is fixed and $q \to \infty$. Observe the theorem only applies if $h \leq N-2$ but not when $h=N-1$. As noted in \S\ref{subsec:intro-ff}, a result of Soundararajan--Zaman \cite{SoundZaman-2022} implies  that the long interval choice $\mathcal{A} = \mathcal{M}_N = \mathcal{I}(K,N-1)$ does \textit{not} satisfy a central limit theorem when taking $q \to \infty$ and then $N \to \infty$. In other words, the conclusion of \cref{thm:CLT-intervals} is therefore \textit{false} when $N$ is fixed, $q\to\infty$, and $h=N-1$.

\subsection{Restricted number of prime factors} Improving upon Hough \cite{Hough-2011}, Harper \cite{Harper-2013} established a CLT in the number field setting for integers with a fixed number of prime factors $k$, so long as $k = o(\log\log x)$ as $x \to \infty$. Harper also demonstrated that this range of $k$ was optimal.  Soundararajan--Xu \cite{SoundXu-2023} deduced the same CLT   via their methods. As noted in \S\ref{subsec:intro-ff}, this work  was extended to the function field setting for Rademacher random multiplicative functions  by Aggarwal--Subedi--Verreault--Zaman--Zheng \cite{FUSRP2020}. 

We extend their work to Steinhaus random multiplicative functions. The Rademacher case is supported on squarefree polynomials, whereas the Steinhaus case is supported on all polynomials. The number of prime factors of a polynomial can therefore take several notions. Define for any polynomial $F \in \mathcal{M}_N$ with prime decomposition $F = P_1^{k_1} P_{2}^{k_2} \cdots P_j^{k_j}$ the two arithmetic functions
\[
\Omega(F) := k_1 + \cdots + k_j \quad \text{ and } \omega(F) := j,
\]
so that $\Omega(F)$ counts the number of prime factors of $F$ with multiplicity while  $\omega(F)$ counts the number of distinct prime factors of $F$. The following central limit theorem is proved in \S\ref{sec:almostprimesCLT}. 

\begin{theorem}\label{thm:CLT-kprimes} Let $q \geq 2$ be a prime power and let $N \geq 1$ be an integer. Assume  $k=o(\log N)$ as $N \to \infty$. If $f$ is a Steinhaus random multiplicative function over $\mathbb{F}_q[t]$, then \eqref{eqn:CLT} holds:
\begin{enumerate}[label=(\alph*)]
	\item As $N \to \infty$, for degree $N$ polynomials with $k$ irreducible factors, namely
	\[
	\mathcal{A} = \mathcal{P}_k(N) := \{ F \in \mathcal{M}_N : \Omega(F) = k \}.  
	\]
	\item As $N \to \infty$, for   degree $N$ squarefree polynomials   with $k$ irreducible factors,  namely
	\[
	\mathcal{A} = \mathcal{S}_k(N) := \{ F \in \mathcal{M}_N : \Omega(F) = \omega(F) = k\}.
	\]
	\item As $N \to \infty$, for degree $N$  polynomials   with $k$ distinct irreducible factors,  namely
	\[
	\mathcal{A} = \mathcal{D}_k(N) := \{ F \in \mathcal{M}_N : \omega(F) = k\}.
	\]
\end{enumerate}
\end{theorem}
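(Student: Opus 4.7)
The plan is to deduce all three parts from \cref{thm:main-intro} via a single choice of auxiliary subset, namely $\mathcal{S} = \mathcal{S}_k(N)$. For each of (a), (b), (c), I take $\mathcal{A}$ to be $\mathcal{P}_k(N)$, $\mathcal{S}_k(N)$, or $\mathcal{D}_k(N)$ respectively, and verify the three hypotheses: the size bound \eqref{main-minimum-size}, the approximation \eqref{main-asymptotic-subset}, and the asymptotic triviality \eqref{main-mult-energy} of the multiplicative energy of $\mathcal{S}_k(N)$. Note that since $q \geq 2$, the outer limit $q^N \to \infty$ required by \cref{thm:main-intro} is automatic as $N \to \infty$, so no further coupling of $q$ and $N$ is needed.

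The first two hypotheses rest on a function field Landau--Selberg--Delange asymptotic for polynomials with a prescribed number of prime factors. By adapting the classical complex-analytic proof using $\zeta_{\mathbb{F}_q[t]}(s) = (1-q^{1-s})^{-1}$, or by an elementary induction on $k$ starting from the Prime Polynomial Theorem, one expects
\[
|\mathcal{S}_k(N)| \;=\; (1+o(1))\,|\mathcal{P}_k(N)| \;=\; (1+o(1))\,|\mathcal{D}_k(N)| \;\sim\; \frac{q^N}{N}\frac{(\log N)^{k-1}}{(k-1)!}
\]
uniformly for $k = o(\log N)$ as $N \to \infty$. The asymptotic equalities come from noting that every element of $\mathcal{P}_k(N) \setminus \mathcal{S}_k(N)$ or $\mathcal{D}_k(N) \setminus \mathcal{S}_k(N)$ must contain a repeated prime factor, costing a factor of $O(1/N)$. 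Since $k = o(\log N)$, the right-hand side is at least $q^N / N^{O(1)}$, which comfortably dominates $q^N \exp(-\tfrac{1}{3}\sqrt{N\log q})$ once $N$ is large (using $q \geq 2$ so that $N\log q \to \infty$).

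For the multiplicative energy of $\mathcal{S}_k(N)$, I would exploit unique factorization in $\mathbb{F}_q[t]$. Each solution of $F_1 F_2 = G_1 G_2$ with four squarefree factors admits a canonical parameterization: set $A = \gcd(F_1, G_1)$, $B = F_1/A$, $C = G_1/A$, yielding coprime squarefree $B, C$; then $F_2 = CD$ and $G_2 = BD$ for some squarefree $D$ coprime to both $B$ and $C$. The constraint that every factor lies in $\mathcal{S}_k(N)$ forces $\Omega(B) = \Omega(C)$ and $\deg B = \deg C$, together with matching conditions on $\deg A, \deg D$ and $\Omega(A), \Omega(D)$. The trivial quadruples, corresponding to $(G_1, G_2) \in \{(F_1, F_2), (F_2, F_1)\}$, contribute exactly $2|\mathcal{S}_k(N)|^2 - |\mathcal{S}_k(N)|$. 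The remaining non-trivial contribution, indexed by $j := \Omega(B) \in \{1, \dots, k-1\}$ together with a further split of $\gcd(A, D)$, can be bounded by inserting the Landau--Selberg--Delange asymptotic into each coordinate and summing over the admissible degrees. A careful estimate, mirroring the Rademacher calculation in \cite{FUSRP2020}, yields $o(|\mathcal{S}_k(N)|^2)$ precisely when $k = o(\log N)$. The Steinhaus versus Rademacher distinction is irrelevant here because $\mathsf{E}_{\times}(\mathcal{S}_k(N))$ is a purely arithmetic count.

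The main obstacle will be establishing the function field Landau--Selberg--Delange asymptotic with enough uniformity in $k$ to cover the full range $k = o(\log N)$, and with enough error control to separate $\mathcal{P}_k(N)$, $\mathcal{S}_k(N)$, and $\mathcal{D}_k(N)$. Once this analytic input is in place, the multiplicative energy estimate reduces to a clean combinatorial calculation, and all three parts follow from a single application of \cref{thm:main-intro}.
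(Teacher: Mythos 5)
Your proposal follows essentially the same route as the paper: apply \cref{thm:main-intro} with $\mathcal{S}=\mathcal{S}_k(N)$ in all three cases, verify \eqref{main-minimum-size} and \eqref{main-asymptotic-subset} from uniform asymptotics for $|\mathcal{P}_k(N)|$, $|\mathcal{S}_k(N)|$, $|\mathcal{D}_k(N)|$, and bound the off-diagonal multiplicative energy via the gcd parametrization. The analytic input you flag as the main obstacle is exactly what the paper imports by citation (Afshar--Porritt for $\mathcal{S}_k$ and $\mathcal{D}_k$, Elboim--Gorodetsky and Hwang for $\mathcal{P}_k$ — note the relative error there is $O(k/(\log N)^2)$ rather than your heuristic $O(1/N)$, which still suffices for $k=o(\log N)$), and the energy estimate is precisely the cited combinatorial lemma from \cite{FUSRP2020}.
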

\begin{remark}
    Following the convention in \cref{rem:limit-convention}, we emphasize that $q$ is permitted to vary arbitrarily in \cref{thm:CLT-kprimes} as $N \to \infty$. Indeed, since $k=o(\log N)$ is the only constraint, the parameter $q$ may depend on $k$ and $N$ as $N \to \infty$. \cref{thm:CLT-shiftedprimes,thm:CLT-rough} share similar features. 
\end{remark}

We suspect the constraint $k = o(\log N)$ as $N \to \infty$ is optimal in light of Harper's optimality result in the number field setting. We did not investigate it and leave this question open.  
 
\subsection{Shifted prime polynomials}

The classical central limit theorem implies \eqref{eqn:CLT} for the choice 
\[
\mathcal{A} = \{ P : P \text{ monic irreducible and } \deg P = N\}, 
\]
since $(f(P))_P$ are iid random variables uniformly distributed  on $\mathbb{S}^1$. 
Motivated by the idea that small shifts have uncorrelated prime factorizations, we establish a central limit theorem in \S\ref{sec:shiftedprimesCLT} for shifted primes in parallel with Soundararajan--Xu \cite[Corollary 1.4]{SoundXu-2023} over $\mathbb{Z}$.

\begin{theorem}\label{thm:CLT-shiftedprimes} 
    Let $q \geq 3$ be a prime power and let $N \geq 1$ be an integer. Let $Z \in \mathcal{M}$ satisfy $\deg Z \leq N-1$. If $f$ is a Steinhaus random multiplicative function over $\mathbb{F}_q[t]$, then \eqref{eqn:CLT} holds for
    \[
    \mathcal{A} = \{ P + Z : P \text{ monic irreducible and } \deg P = N \}
    \]
	 as $N \to \infty$. 
\end{theorem}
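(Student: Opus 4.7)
I will apply \cref{thm:main-intro} with $\mathcal{S} = \mathcal{A}$. The prime polynomial theorem gives $|\mathcal{A}| = q^N/N + O(q^{N/2})$, comfortably satisfying the size hypothesis \eqref{main-minimum-size} as $N \to \infty$, and \eqref{main-asymptotic-subset} is automatic. All the work goes into verifying the multiplicative energy condition \eqref{main-mult-energy}.

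Parameterize quadruples $(s_1, s_2, s_3, s_4) \in \mathcal{A}^4$ satisfying $s_1 s_2 = s_3 s_4$ via $d = \gcd(s_1, s_3)$, $a = s_1/d$, $b = s_3/d$, $c = s_2/b$, so that $(s_1, s_2, s_3, s_4) = (da, bc, db, ac)$ with $\gcd(a, b) = 1$. Degree balancing forces $\deg a = \deg b = m$ and $\deg c = \deg d = N - m$ for some $0 \leq m \leq N$, and the diagonal solutions correspond precisely to $m = 0$ or $c = d$, contributing $2|\mathcal{A}|^2 - |\mathcal{A}|$ quadruples. Imposing $s_i \in \mathcal{A}$ amounts to requiring $da - Z$, $bc - Z$, $db - Z$, and $ac - Z$ all to be monic irreducibles of degree $N$. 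Since, for fixed $(a, b)$, the primality conditions on $c$ and on $d$ decouple, the non-diagonal contribution is bounded by
\[
\mathrm{NT} \leq \sum_{m=1}^{N-1}\ \sum_{\substack{a, b \in \mathcal{M}_m \\ \gcd(a, b) = 1}} T(a, b)^2,
\]
where $T(a, b) := \#\{c \in \mathcal{M}_{N-m} : ac - Z \text{ and } bc - Z \text{ are both monic irreducible of degree } N\}$. Applying the function field Selberg sieve of \cref{sec:prelim} to the twin-prime-type pair $c \mapsto (ac - Z, bc - Z)$, together with a standard second moment bound for the associated singular series over the pairs $(a, b)$, one arrives at $\mathrm{NT} \ll q^{2N}/N^3 = o(|\mathcal{A}|^2)$, so that $\mathsf{E}_{\times}(\mathcal{A}) = (2 + o(1))|\mathcal{A}|^2$, and \cref{thm:main-intro} delivers the desired CLT.

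The main obstacle is securing a sieve estimate for $T(a, b)$ with the Hardy--Littlewood denominator $N^2$ (matching the prime tuples heuristic) uniformly over $1 \leq m \leq N - 1$. A direct Selberg sieve in the variable $c$ only yields the weaker bound $T(a, b) \ll q^{N-m}/(N-m)^2$, which degrades sharply as $m$ approaches $N$. To close the argument one combines this pointwise sieve bound (sharp in the range $m \lesssim N/2$) with a Brun--Titchmarsh-style mean-value estimate for $\sum_{(a, b)} T(a, b)$ coming from the prime polynomial theorem in arithmetic progressions (which handles $m$ close to $N$), balancing the two via a dyadic decomposition of $m$. The hypothesis $q \geq 3$ enters through the analysis of singular series local factors at degree-one primes, which exhibit degeneracies in characteristic $2$.
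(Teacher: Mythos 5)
Your overall strategy matches the paper's: take $\mathcal{S}=\mathcal{A}$, reduce everything to the multiplicative energy, parametrize off-diagonal solutions by GCDs so that the four primality conditions decouple into the square of a twin-prime-type count, and bound that count with the Selberg sieve of \cref{lem:selberg-sieve}. The genuine gap is your treatment of the range where the sieved variable is short. You correctly observe that sieving in $c\in\mathcal{M}_{N-m}$ only gives $T(a,b)\ll \frac{|ab(a-b)Z|}{\phi(ab(a-b)Z)}\,q^{N-m}/(N-m)^2$, which degenerates as $m\to N$; but your proposed repair --- a Brun--Titchmarsh mean-value bound for $\sum_{a,b}T(a,b)$ balanced dyadically against the pointwise bound --- does not close. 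The natural implementation is $\sum_{a,b}T(a,b)^2\leq(\max_{a,b}T)\sum_{a,b}T(a,b)\ll \frac{q^{N-m}}{(N-m)^2}\cdot\frac{q^{N+m}(\log N)^2}{m^2}$, and summing over $m>N/2$ produces $q^{2N}(\log N)^2/N^2$, which is \emph{not} $o(|\mathcal{A}|^2)$. The paper sidesteps the short-sieve-variable range entirely with a one-line symmetry you are missing: after dropping the coprimality of $(a,b)$ for an upper bound, $\sum_{a,b\in\mathcal{M}_m}T(a,b)^2$ is exactly the number of quadruples $(a,b,c,d)$ with $ac-Z,\ ad-Z,\ bc-Z,\ bd-Z$ all prime, which equals $\sum_{c,d\in\mathcal{M}_{N-m}}S(c,d)^2$ with the sieve now run over $a\in\mathcal{M}_m$. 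Hence one may assume the sieved variable has degree $\geq N/2$, the sieve denominator is always $\gg N^2$, and summing the resulting bound over the $O(N)$ admissible degrees gives $O(q^{2N}(\log N)^4/N^3)=o(q^{2N}/N^2)$. You should either import this symmetry step or supply a genuinely uniform estimate for $\sum_{a,b}T(a,b)^2$ when $N-m$ is small; as written, neither is in place.

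Two secondary points. First, your explanation of the hypothesis $q\geq 3$ ("singular series local factors at degree-one primes") does not match where it is actually needed: the paper uses it to prove that $G\mapsto(GA-Z)(GB-Z)$ is injective on $\mathcal{M}_d$ (so that the sifted set has exact size $q^d$), which rests on $\deg(A+B)=N-d$ for monic $A,B$ of equal degree, i.e., on the leading coefficient $1+1$ being nonzero. If you set up the sieve differently you must still identify concretely where characteristic $2$ breaks your argument. Second, no second-moment bound for the singular series is required: the pointwise bound $|M|/\phi(M)\ll(\log N)^2$ from \cref{lem:totient} applied to $M=ab(a-b)Z$ (a polynomial of degree $O(N)$) already suffices.
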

\begin{remark}
	We exclude $q=2$ due to one step in the proof; see the footnote in \S\ref{sec:shiftedprimesCLT}.  
\end{remark}

\subsection{Rough polynomials}

We refer to the subset of polynomials whose prime factors all have degree exceeding an integer $z \geq 1$ as \textit{$z$-rough polynomials}. In other words, polynomials $F$ such that 
\begin{equation} \label{def:min-degree}
P^-(F) := \min\{ \deg P : P \mid F, \, P \text{ monic irreducible} \}
\end{equation}
exceeds $z$. 
 As suggested by Xu \cite{Xu-2022}, one can expect a CLT when summing over sufficiently rough polynomials. As a brief final application of \cref{thm:main-intro}, we confirm this observation in \S\ref{sec:roughpolyCLT}. 
\begin{theorem}\label{thm:CLT-rough}
    Let $q \geq 2$ be a prime power and $N\geq 1$ an integer. Assume $N^{1/2} \leq z \leq N-1$ is an integer satisfying $N^{1/2} = o(z)$  as $N \to \infty$. If $f$ is a Steinhaus random multiplicative function over $\mathbb{F}_q[t]$, then \eqref{eqn:CLT} holds  for
    \[
    \mathcal{A} = \{ F \in \mathcal{M}_N : P^-(F) > z \}
    \]
    as $N \to \infty$. 
\end{theorem}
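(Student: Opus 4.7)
The plan is to apply \cref{thm:main-intro} with the choice $\mathcal{S} = \mathcal{A}$. This makes \eqref{main-asymptotic-subset} automatic, so the task reduces to verifying the size bound \eqref{main-minimum-size} and the multiplicative energy condition \eqref{main-mult-energy}. For \eqref{main-minimum-size}, every monic irreducible of degree $N$ belongs to $\mathcal{A}$ (since $z \leq N-1 < N$), so the prime polynomial theorem forces $|\mathcal{A}| \gg q^N/N$, which easily dominates $q^N \exp(-\tfrac{1}{3}\sqrt{N\log q})$ for $N$ large. Later I will also need the sharper lower bound $|\mathcal{A}| \gg q^N/z$, which follows from the function field Mertens estimate (equivalently, the fundamental lemma of the Selberg sieve).

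For \eqref{main-mult-energy}, I will use the standard parameterization of solutions to $F_1F_2 = G_1G_2$ in $\mathcal{A}^4$: setting $D = \gcd(F_1, G_1)$, $A = F_1/D$, $B = G_1/D$, there is a unique monic $C$ with $F_2 = BC$ and $G_2 = AC$, and one has $\gcd(A,B) = 1$, $\deg A = \deg B =: a$, and $\deg C = \deg D = N-a$. Since every factor of a $z$-rough polynomial is itself $z$-rough, the four polynomials $A, B, C, D$ are all $z$-rough. The diagonal quadruples---those with $A = B$ (equivalently $A = B = 1$ by coprimality) or $C = D$---account for exactly $2|\mathcal{A}|^2 - |\mathcal{A}|$ elements. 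Any off-diagonal quadruple has $A \neq B$ and $C \neq D$, so each of $A, B, C, D$ is a non-constant $z$-rough polynomial and therefore has degree strictly greater than $z$; hence $z+1 \leq a \leq N-z-1$. When $z \geq (N-1)/2$ this range is empty and the off-diagonal contribution vanishes outright. Otherwise, writing $\rho(n) := |\{F \in \mathcal{M}_n : P^-(F) > z\}|$, the off-diagonal count is at most
\[
\sum_{a=z+1}^{N-z-1} \rho(a)^2 \, \rho(N-a)^2.
\]

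To finish, I will invoke the uniform upper bound $\rho(n) \ll q^n/z$ valid for all $z < n \leq N$. For $n \leq 2z$ this is immediate from the prime polynomial theorem, and for larger $n$ it follows from the function field Selberg sieve (applicable here since $n \leq N \leq z^2$ under the hypothesis $z \geq N^{1/2}$). The sum above is then $\ll N q^{2N}/z^4$, and dividing by $|\mathcal{A}|^2 \gg q^{2N}/z^2$ yields a ratio $\ll N/z^2 = o(1)$ by the hypothesis $N^{1/2} = o(z)$. Combined with the diagonal contribution, this gives $\mathsf{E}_{\times}(\mathcal{A}) = (2 + o(1))|\mathcal{A}|^2$, confirming \eqref{main-mult-energy}, so \cref{thm:main-intro} delivers the desired CLT. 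The main obstacle will be cleanly securing the uniform Chebyshev-type bound $\rho(n) \ll q^n/z$ across all relevant $q$, $z$, and $n$; a short-interval version of exactly this estimate already appears in the paper as \cref{lem:chebyshev}, so the remaining work should be routine adaptation rather than substantive new input.
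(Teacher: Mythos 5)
Your proposal is correct and follows essentially the same route as the paper: take $\mathcal{S}=\mathcal{A}$, verify the size condition via the count of rough polynomials, and bound the off-diagonal multiplicative energy through the GCD parametrization together with the Chebyshev-type bound $\Phi(n,z)\ll q^n/z$ (the paper's \cref{lem:rough-bsg}, which also supplies the lower bound $|\mathcal{A}|\gg q^N/z$ you need), yielding an error $\ll Nq^{2N}/z^4=o(|\mathcal{A}|^2)$ exactly as in the paper. The only cosmetic differences are that the paper dispatches $z>N/2$ by the classical CLT for irreducibles rather than by noting the off-diagonal range is empty, and your parenthetical condition $n\leq z^2$ for the sieve is unnecessary (only $n\geq 2z$ is needed).
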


\subsection{Ingredients for short intervals} \label{subsec:ingredients}
The proof of  \cref{thm:main-intro} and its applications  required three ingredients about short intervals, which are new as far as we are aware.  Similar questions  have been studied extensively, e.g.  \cite{Bankshortintervals, Keatingshortintervals, KeatingRudnickshortintervals, Sawin-2021}; see Rudnick \cite{RudnickSurvey} for a short survey. 
 
 The first ingredient is a uniform estimate for the number of \textit{$d$-smooth polynomials}, i.e. polynomials $F$ such that $P^+(F) \leq d$. Gorodetsky \cite{OfirSmoothPolys23} recently established strong uniform estimates for long intervals and gives a detailed survey of that literature. Our focus is on short intervals. Bank--Bary-Soroker--Rosenzweig \cite{Bankshortintervals} studied a more general quantity, namely polynomials of a given cycle structure in an interval, and  provided an essentially optimal result when $N$ is fixed and $q \to \infty$. On the other hand, Thorne \cite{Thorne-2008} showed their  conclusion may fail when $q$ is fixed and $N\to\infty$ due to  irregularities of distribution analogous to Maier's famous theorem for primes in arithmetic progressions. In another direction, M\'erai \cite{MeraiPrescribed2024} recently established an asymptotic for the number of smooth polynomials with prescribed coefficients (and hence in an interval). Unfortunately, their estimates appear insufficient for our purposes when $h +1 < N/2$.  These uniformity difficulties prompted us to carry out a different analysis and, to our knowledge, establish a novel bound on the number of $d$-smooth polynomials in an interval. 
 The proof appears in \S\ref{sec:smoothroughpolys}. 
  
\begin{proposition}[Explicit Hildebrand inequality] \label[proposition]{prop:intervalshift} Let $q \geq 2$ be a prime power and $N \geq 2$ be an integer. Fix a polynomial $A \in \mathcal{M}_N$. 
    For any integers $0 \leq h \leq N-1$ and $d \geq 0$,
    \[
    |\{ F \in  \mathcal{M}_N : \deg(F-A) \leq h, P^+(F) \leq d\}| \leq     |\{ F \in  \mathcal{M}_{h+1} :  P^+(F) \leq d\}|. 
    \]
\end{proposition}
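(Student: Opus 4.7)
The plan is to exhibit an explicit injection $\phi \colon \mathcal{A} \hookrightarrow \mathcal{B}$, where $\mathcal{A} = \{F \in \mathcal{M}_N : \deg(F-A) \leq h,\ P^+(F) \leq d\}$ is the left-hand set and $\mathcal{B} = \{G \in \mathcal{M}_{h+1} : P^+(G) \leq d\}$ is the right. The strategy is to exploit the $d$-smoothness of each $F$ by extracting a canonical divisor of degree close to $h+1$ and padding it out to degree exactly $h+1$ with residual data that pins $F$ down inside its interval.

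First I would establish an elementary linear-algebra lemma: for any monic $G$ of degree $e \leq h+1$, the set of multiples $\{F' \in \mathcal{M}_{N-e} : GF' \in \mathcal{I}(A, h)\}$ has cardinality at most $q^{h+1-e}$. Expanding $F = GF'$ and matching coefficients with $A$, the interval condition becomes $N-h-1$ linear equations in the $N-e$ free coefficients of $F'$; since $G$ is monic these equations are independent, so the solution set is empty or an affine space of the claimed size. The crucial special case $e = h+1$ says that each $G \in \mathcal{M}_{h+1}$ has at most one multiple lying in $\mathcal{I}(A,h)$.

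Next, for each $F \in \mathcal{A}$, a greedy procedure produces a distinguished $d$-smooth divisor $G_F \mid F$: list the prime factors of $F$ (with multiplicity) in ascending order of degree (with a fixed tie-breaker) and accumulate them until the next prime would push the running degree past $h+1$. Since every prime factor has degree at most $d$, this forces $\deg G_F \in [h+2-d,\, h+1]$. Writing $r(F) := h+1 - \deg G_F \in [0,\, d-1]$, the linear-algebra lemma says that $F$ is determined by $G_F$ together with $r(F)$ free coefficients of $F/G_F$; I encode these as a monic polynomial $R_F \in \mathcal{M}_{r(F)}$, which is automatically $d$-smooth because $\deg R_F < d$, and then set $\phi(F) := G_F R_F \in \mathcal{B}$.

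The main obstacle is verifying that $\phi$ is injective. Small examples (already over $\mathbb{F}_3$) show that a naive encoding---letting $R_F$ simply package the trailing coefficients of $F/G_F$---can produce genuine collisions, since the same product in $\mathcal{M}_{h+1}$ may factor as (greedy divisor)$\cdot$(padding) in more than one way. To rule these out I plan to exploit a structural feature of the greedy construction: $G_F$ must contain every prime factor of $F$ of degree at most $r(F)$ with its full multiplicity in $F$, while the constraint $\deg R_F = r(F) < d$ restricts which primes can appear in $R_F$; a degree-by-degree comparison of prime multiplicities inside $\phi(F)$ should then pin down $G_F$ uniquely, and the counting lemma recovers $R_F$. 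Should this refinement still admit collisions, the fallback is to abandon the pointwise injection in favour of a weighted double-counting argument that invokes the linear-algebra lemma at every $e \in [h+2-d,\, h+1]$ and combines the resulting bounds using the greedy existence of at least one divisor of $F$ in that degree range.
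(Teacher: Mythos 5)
Your proposal hinges entirely on the injectivity of $\phi(F)=G_FR_F$, which you acknowledge is unresolved, and neither of your two suggested repairs works. The ``degree-by-degree comparison of prime multiplicities'' cannot recover $G_F$ from the product $G_FR_F$: the padding polynomial $R_F$ has degree $r(F)<d$ and may therefore contain primes of exactly the same degrees as the genuine small primes already absorbed into $G_F$, so inside $\phi(F)$ the padding primes and the genuine primes are indistinguishable. Concretely, take $d=2$ and $h+1=4$. A polynomial $F$ with exactly three linear factors $L_1L_2L_3$ whose fourth-smallest prime is quadratic has greedy divisor $G_F=L_1L_2L_3$ of degree $3$ and a linear padding $R_F$, while a polynomial $F'$ with at least four linear factors has greedy divisor $G_{F'}=L_1'L_2'L_3'L_4'$ of degree $4$ and trivial padding; whenever the multiset $\{L_1,L_2,L_3,R_F\}$ equals $\{L_1',L_2',L_3',L_4'\}$ you get $\phi(F)=\phi(F')$ with no structural feature of the image distinguishing the two preimages. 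The fallback ``weighted double-counting over all $e\in[h+2-d,h+1]$'' is not an argument yet: summing the counting lemma over all admissible divisor degrees gives something like $\sum_e \Psi(e,d)\,q^{h+1-e}$, which overcounts each $F$ once per divisor in that degree range and each element of $\mathcal{M}_{h+1}$ many times, and the proposition is an exact inequality with constant $1$, so any such loss is fatal.

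For comparison, the paper avoids bijective reasoning altogether: it proves the bound by induction on $N$, summing the unique-factorization identity $\deg F=\sum_{P,m}\deg P\,\mathbbm{1}(P^m\mid F)$ over $d$-smooth $F$ in the interval, bounding the terms with $m\deg P>h+1$ trivially, applying the induction hypothesis to the interval quotients $\mathcal{I}(A,h)/P^m$ for $m\deg P\leq h+1$, and then recognizing the resulting double sum via a Chebyshev--Hildebrand identity $ (h+1)\Psi(h+1,d)=\sum_{k\leq d}\sum_{m\leq (h+1)/k}k\,\pi_q(k)\,\Psi(h+1-mk,d)$. If you want to salvage your approach, you would need either a genuinely canonical way to mark the padding inside the image (which the example above suggests does not exist) or to switch to an averaging identity of this kind.
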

Since $\mathcal{M}_{h+1} = \mathcal{I}(t^{h+1},h)$ for the monic polynomial $t^{h+1}$ in $\mathbb{F}_q[t]$ (using the notation from \S\ref{subsec:CLTs-intervals}), \cref{prop:intervalshift}  states that the number of $d$-smooth polynomials in \textit{any} interval of radius $h$ is at most the number lying  in the ``shifted interval" of radius $h$ about the polynomial $t^{h+1}$. Our result is a function field analogue of  a theorem of Hildebrand \cite[Theorem 4]{Hildebrand-1985}, who showed that 
\[
|\{ n \in [x,x+y] \cap \mathbb{N} : p \mid n \implies p \leq d\}| \leq |\{ n \in [0,y] \cap \mathbb{N} : p \mid n \implies p \leq d \}|
\]
for $d$ sufficiently large and $x,y\geq d$. Hildebrand's arguments do not appear straightforward to adapt to function fields, so we devised an alternate combinatorial proof. 

The second new ingredient is an analogue of  Shiu's theorem \cite[Theorem 1]{Shiu-1980} for non-negative multiplicative functions over $\mathbb{Z}$. Over $\mathbb{F}_q[t]$, Tamam \cite[Theorem 6.1]{Tamam-2014} established a special case for the divisor function.  Here we establish a general form  by closely following Shiu's strategy.  Below we record a simplified version; the full form  and its proof appear in \S\ref{sec:shiu}.  

\begin{theorem}[Uniform Shiu bound]\label{thm:shiu} Let $q \geq 2$ be a prime power and $N \geq 1$ be an integer. Let $g : \mathcal{M} \to [0,\infty)$ be a non-negative multiplicative function on $\mathcal{M}$ satisfying $\log g(P^{\ell}) \ll \ell $  for every integer $\ell$ and monic irreducible $P$, and $g(F) \ll_{\epsilon} 2^{\epsilon \deg F}$ for every $\epsilon > 0$ and any $F \in \mathcal{M}$.  If  $A \in \mathcal{M}_N$,  $0< \beta < 1/2$, $\beta N < h \leq N-1$, and $N$ is sufficiently large depending only on $\beta$, then  
    \begin{align*}
        \sum_{\substack{F\in \mathcal{I}(A,h)}}g(F)\ll_{\beta} \frac{q^{h+1}}{N}\exp\Big(\sum_{\substack{P\in\mathcal{P}_{\leq N}}}\frac{g(P)}{q^{\deg P}}\Big). 
    \end{align*}
\end{theorem}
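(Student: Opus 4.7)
The plan is to adapt Shiu's classical smooth--rough decomposition to $\mathbb{F}_q[t]$. Fix a threshold $y = \lfloor \beta N / C_0 \rfloor$ for a large absolute constant $C_0$ to be chosen later; in particular $y$ is a positive fraction of $N$. For each $F \in \mathcal{M}_N$, uniquely write $F = BD$ where $B$ collects all prime-power factors $P^k \| F$ with $\deg P \leq y$ and $D = F/B$ satisfies $P^-(D) > y$. Multiplicativity of $g$ gives $g(F) = g(B) g(D)$. Since $\Omega(D) \leq \deg D / (y+1) = O_\beta(1)$, the hypothesis $\log g(P^\ell) \ll \ell$ implies $g(D) = O_\beta(1)$, so
\[
\sum_{F \in \mathcal{I}(A,h)} g(F) \ll_\beta \sum_{\substack{B \in \mathcal{M} \\ P^+(B) \leq y}} g(B) \cdot |\{D \in \mathcal{M} : P^-(D) > y, \, BD \in \mathcal{I}(A,h)\}|.
\]

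For $B$ with $\deg B \leq (1 - \beta/2) N$, the condition $BD \in \mathcal{I}(A,h)$ confines $D$ to a short-interval-type subset of $\mathcal{M}_{N - \deg B}$ of cardinality at most $q^{h+1 - \deg B}$, and the uniform Chebyshev bound (\cref{lem:chebyshev}) counts the $y$-rough elements as $O_\beta(q^{h+1 - \deg B}/N)$ using $\deg D = N - \deg B \gg_\beta N$. Summing over such $B$ gives
\[
\ll_\beta \frac{q^{h+1}}{N} \sum_{\substack{B \in \mathcal{M} \\ P^+(B) \leq y}} \frac{g(B)}{q^{\deg B}} = \frac{q^{h+1}}{N} \prod_{\substack{P \text{ mon. irred.} \\ \deg P \leq y}} \sum_{k \geq 0} \frac{g(P^k)}{q^{k \deg P}},
\]
which is $\ll_\beta (q^{h+1}/N) \exp\bigl(\sum_{\deg P \leq N} g(P)/q^{\deg P}\bigr)$: the $k \geq 2$ terms are absorbed into an $O(1)$ factor using $g(P^k) \leq e^{O(k)}$ and $q^{\deg P} \geq 2$, while non-negativity of $g$ extends the outer sum beyond $\deg P \leq y$.

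The chief obstacle is the complementary ``nearly smooth'' regime $\deg B > (1 - \beta/2) N$, where $\deg D$ may be too small to apply \cref{lem:chebyshev} directly. Here $F$ is essentially $y$-smooth, and I would bound the total contribution by combining the explicit Hildebrand inequality (\cref{prop:intervalshift}) with the growth bound $g(F) \ll_\epsilon 2^{\epsilon \deg F}$: the former reduces counting $y$-smooth polynomials in $\mathcal{I}(A,h)$ to counting them in the shifted interval $\mathcal{M}_{h+1}$, and standard upper bounds on the density of $y$-smooth polynomials of degree $h+1$ (which decays like $u^{-u(1+o(1))}$ for $u = (h+1)/y$) render this contribution $o(q^{h+1}/N)$ once $C_0$ is chosen sufficiently large relative to $\beta^{-1}$.
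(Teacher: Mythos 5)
Your overall architecture (factor $F=BD$ into a smooth part and a rough part, apply \cref{lem:chebyshev} to count the rough cofactors in the interval quotient, then sum $g(B)/q^{\deg B}$ via an Euler product) is the right one and matches the paper's main term (Class~I of the proof of \cref{thm:shiu-general}). However, there are two genuine gaps. First, your ``main case'' $\deg B\leq(1-\beta/2)N$ is not the range in which \cref{lem:chebyshev} applies: the interval quotient $\mathcal{I}(A,h)/B$ is an interval of \emph{radius} $h-\deg B$, and the Chebyshev bound with roughness parameter $y$ requires $y\leq (h-\deg B+1)/2$, i.e.\ $\deg B\lesssim h-2y$. Since $h$ may be as small as $\beta N+1$ while your cutoff allows $\deg B$ up to $(1-\beta/2)N>h$, the quotient can be empty or a single point and the claimed $O_\beta(q^{h+1-\deg B}/N)$ count is unjustified (the $1/N$ saving comes from the interval radius versus the roughness parameter, not from $\deg D\gg_\beta N$). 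The paper sidesteps this by defining $B_F$ as a greedy prefix of the factorization with $\deg B_F\leq z=h/3$ forced by construction, so the quotient always has radius $\geq 2h/3$.

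Second, and more seriously, the complementary ``nearly smooth'' regime cannot be closed with a smoothness threshold $y\asymp_\beta N$. There $u=(h+1)/y=O_\beta(1)$ is \emph{bounded}, so $\rho(u)$ is a positive constant depending only on $C_0$ and $\beta$; the $y$-smooth polynomials occupy a positive proportion of $\mathcal{I}(A,h)$, which is already not $o(q^{h+1}/N)$ even for $g\equiv 1$, and the only uniform bound on $g$ there, namely $g(F)\ll_\epsilon 2^{\epsilon N}$, grows exponentially in $N$ and cannot be absorbed by a constant saving. This is precisely why the paper works with much smaller smoothness thresholds in the bad regimes: Class~III uses $\Delta\asymp\log_q(z\log z)$ so that the smooth count decays like $2^{-cz}$ (superexponential Dickman decay, via \cref{thm:gorodetsky}), which beats $g(F)\ll 2^{z/8}$; Class~IV interpolates over scales $z/r$ with a Rankin-type saving $r^{-r/8}$ against the growth $A_1^{6r/\beta}$ of $g$ on the rough cofactor; and Class~II separately removes large prime powers. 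To repair your argument you would need to replace the single cutoff $y\asymp_\beta N$ by this multi-scale decomposition (or an equivalent device), since no fixed constant value of $u$ suffices.
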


The third new ingredient is an explicit bound for the number of rough polynomials in an interval, which is used to prove \cref{thm:shiu}. The proof appears in \S\ref{sec:smoothroughpolys} via  standard Selberg sieve arguments. 
\begin{lemma}[Short interval Chebyshev] \label[lemma]{lem:chebyshev} Let $q \geq 2$ be a prime power and $N \geq 1$ be an integer. Fix a polynomial $A \in \mathcal{M}_N$. For any integers $0 \leq h \leq N-1$ and $1 \leq z \leq (h+1)/2$, 
    \[
    |\{ F \in \mathcal{M}_N : \deg(F-A) \leq h, P^-(F) > z \}| \leq \frac{q^{h+1}}{z \big( 1-1/q\big)}.
    \]
\end{lemma}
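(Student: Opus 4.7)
The plan is to apply the Selberg upper bound sieve—in the simplified function field form developed in \cref{sec:prelim}—to the interval $\mathcal{I}(A,h)$ with sifting primes $\{P : \deg P \leq z\}$. The crucial input is a perfect level of distribution for short intervals: for any monic $d \in \mathbb{F}_q[t]$ with $\deg d \leq h+1$, the reduction map from the $(h+1)$-dimensional $\mathbb{F}_q$-vector space $\{R \in \mathbb{F}_q[t] : \deg R \leq h\}$ to $\mathbb{F}_q[t]/(d)$ is a surjection of $\mathbb{F}_q$-vector spaces with uniform fibers, so $|\{F \in \mathcal{I}(A,h) : d \mid F\}| = q^{h+1-\deg d}$. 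Setting the sieve parameter $D := \lfloor (h+1)/2\rfloor$, the hypothesis $z \leq (h+1)/2$ and the integrality of $z$ give $z \leq D$, while any squarefree $d_1, d_2$ of degree at most $D$ satisfy $\deg[d_1,d_2] \leq 2D \leq h+1$, so the exact count applies to every least common multiple appearing in Selberg's quadratic form.

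Selberg's method then yields
\[
|\{F \in \mathcal{I}(A,h): P^-(F) > z\}| \leq \frac{q^{h+1}}{G(D)}, \qquad G(D) := \sum_{\substack{d \text{ squarefree monic} \\ \deg d \leq D,\, P\mid d \Rightarrow \deg P \leq z}} \prod_{P\mid d}\frac{1}{q^{\deg P}-1}.
\]
To bound $G(D)$ from below, I would expand each local factor geometrically via $(q^{\deg P}-1)^{-1} = \sum_{k\geq 1} q^{-k\deg P}$, swap the order of summation, and parametrize each resulting term by the monic polynomial $m = d\cdot e$ (where $e$ ranges over monics whose prime factors all divide $d$), so that $d$ is forced to be the squarefree radical of $m$. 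This recasts the sieve sum as
\[
G(D) = \sum_{\substack{m \in \mathcal{M},\, P \mid m \Rightarrow \deg P \leq z \\ \deg r(m) \leq D}} \frac{1}{q^{\deg m}},
\]
where $r(m)$ denotes the squarefree radical. Restricting to $m$ with $\deg m \leq z$—for which $\deg r(m) \leq \deg m \leq z \leq D$ and the prime-support constraint are automatic—each degree $j \in \{0,1,\dots,z\}$ contributes $q^j \cdot q^{-j} = 1$, yielding $G(D) \geq z+1 \geq z \geq z(1-1/q)$. Combining with the previous display completes the proof.

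The argument is a routine application of the Selberg sieve, so there is no substantial obstacle; the only delicate point is checking that the simplified sieve in \cref{sec:prelim} produces precisely the quadratic form whose optimum is $1/G(D)$ above. If that prepackaged version phrases the bound in terms of a slightly different sum, one substitutes it for $G(D)$ and performs the analogous geometric-series manipulation to recover the same lower bound $z(1-1/q)$.
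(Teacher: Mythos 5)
Your proposal is correct and follows essentially the same route as the paper: apply Selberg's sieve to $\mathcal{I}(A,h)$ using the exact, remainder-free divisor counts $q^{h+1-\deg d}$ valid for $\deg d\leq h+1$, and then lower-bound the sieve denominator $\sum_{d}1/\phi(d)$ by roughly $z$. The only (harmless) differences are that you take divisors up to degree $\lfloor(h+1)/2\rfloor$ rather than $z$ and lower-bound the denominator via the geometric expansion $1/\phi(d)=\sum_{r(m)=d}q^{-\deg m}$, whereas the paper restricts to $\deg d\leq z$ and uses $1/\phi(d)\geq 1/|d|$; both yield a denominator $\geq z(1-1/q)$.
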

\noindent A similar estimate for the long interval $\mathcal{M}_N = \mathcal{I}(A,N-1)$ was given by Bary-Soroker--Goldgraber \cite[Lemma 2.3]{BarySorokerGoldgraber-2023}, so this lemma extends that bound to all short  intervals $\mathcal{I}(A,h)$. While the arguments are routine, we highlight \cref{lem:chebyshev} since such bounds might be useful in other contexts.

\section{Preliminaries}\label{sec:prelim}

\subsection{Notation}
We collect notation and conventions used throughout the paper.

\begin{itemize}
\item $q \geq 2$ is a prime power and $N \geq 1$ is an integer.
\item $\mathbb{F}_q$ is the finite field with $q$ elements, and $\mathbb{F}_q[t]$ is the polynomial ring over $\mathbb{F}_q$.
\item $\mathcal M$ is the set of monic polynomials.
\item $\mathcal M_{\leq N}$ (resp.\ $\mathcal M_N$) is the set of monic polynomials of degree $\leq N$ (resp. degree $=N$).
\item $\mathcal P$ denotes the set of monic irreducible polynomials (``primes'').
\item $\mathcal P_{\leq N}$ (resp.\ $\mathcal P_N$) are the primes of degree $\leq N$ (resp.\ degree $=N$).
\item $\pi_q(N)$ is the number of monic irreducible polynomials of degree $N$. 
\item $P^+(F) = \max\{ \deg P : P \mid F, P \text{ monic irreducible } \}$ as defined in \eqref{def:max-degree}. 
\item $P^-(F) = \min\{ \deg P : P \mid F, P \text{ monic irreducible } \}$ as defined in \eqref{def:min-degree}. 
\item $(F,G)$ is the unique monic greatest common divisor of polynomials $F,G \in \mathbb{F}_q[t]$
 \item $[F,G]$ is the unique monic least common multiple of polynomials $F,G \in \mathbb{F}_q[t]$.
\item $\lvert F\rvert := q^{\deg F}$ is the norm of a polynomial $F \in \mathbb{F}_q[t]$ with the convention  $\lvert 0\rvert=0$.
\item $\phi(F) := |(\mathbb{F}_q[t]/(F))^{\times}| = |F| \prod_{P \mid F} (1-|F|^{-1})$ denotes the totient function for any $F \in \mathbb{F}_q[t]$.  

\item $\mathsf{E}_{\times}(\mathcal{S})$ is the multiplicative energy of a finite set $\mathcal{S} \subseteq \mathcal{M}$ defined by \eqref{eqn:mult-energy}. 
\item $\mathbbm{1}(E)$ denotes the indicator of the event $E$.
\end{itemize}
Recall that $|\mathcal{M}_N| = q^N$,  
$|\mathcal{P}_N| = \pi_q(N)$, and 
$$\frac{q^N}{N}-2\frac{q^{N/2}}{N} \leq \pi_q(N) = \frac{1}{N}\sum_{d |N} \mu(d) q^{N/d} \leq \frac{q^N}{N}.$$
The capital letters $F, G$ will always denote monic polynomials, and the capital letters $P, Q$ will always denote monic irreducible polynomials, which we interchangeably call primes. 

For variables $a$ and $b$, we write $a \ll b$ or $a=O(b)$ to say that there exists an absolute positive constant $C$ such that $|a| \leq C b$. If the constant $C$ depends on a parameter, say $k$, we shall write $a \ll_k b$ or $a=O_k(b)$. If $a$ and $b$ depend on a positive parameter $x$, then we say that $a=o(b)$ as $x \rightarrow \infty$ if the ratio $\frac{a}{b}$ converges to 0 as $x \rightarrow \infty$.

Unless otherwise stated, all estimates and limits will be uniform in the parameters $q$ and $N$, and other parameters will be permitted to depend on $q$ and $N$. See \cref{rem:limit-convention,rem:parameter-convention} for details.

\subsection{Intervals.}\label{sec:intervals}  
We present definitions for intervals in $\mathbb{F}_q[t]$ and basic properties.  For a polynomial $F \in \mathcal{M}_N$ and integer $-1 \leq h \leq N-1$, the \textit{interval of radius $h$ around $F$} is the set
\begin{equation}\label{def:interval}
	\mathcal{I}(F,h) :=\{ G \in \mathcal{M}_N : \deg(F-G) \leq h \}.
\end{equation}
Observe that $\mathcal{I}(F,N-1) = \mathcal{M}_N$,  $\mathcal{I}(F,-1) = \{ F\}$ and  $|\mathcal{I}(F,h)| = q^{h+1}$ for $-1 \leq h  \leq N-1$.  We record the number of elements in an interval divisible by a given polynomial $G$.

\begin{lemma}\label[lemma]{lem:interval-divisors} 
    Let $0 \leq d \leq N-1$ and $1 \leq h \leq N-1$ be integers. For $F\in\mathcal{M}_N$ and $G\in\mathcal{M}_d$,     
    \[
    |\{ A \in \mathcal{M}_{N-d} : GA \in \mathcal{I}(F,h) \}| \begin{cases}
 	= q^{h-d+1} & \text{ if } d \leq h+1, \\ 
 	\leq 1 & \text{ otherwise.}
	 \end{cases}
    \]
\end{lemma}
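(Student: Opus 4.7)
The plan is to reduce the divisibility/interval condition to a simple linear-algebraic statement via polynomial long division, where the two cases will naturally separate.

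First I observe that since $G$ and $A$ are both monic of degrees $d$ and $N-d$, the product $GA$ is automatically monic of degree $N$, so the condition $GA \in \mathcal{I}(F,h)$ is equivalent to $\deg(GA-F) \leq h$. To exploit this, perform polynomial division of $F$ by $G$: write
\[
F = G Q + R, \qquad Q \in \mathcal{M}_{N-d}, \qquad \deg R < d.
\]
(Here $Q$ is forced to be monic of degree $N-d$ because both $F$ and $G$ are monic.) Setting $B := A - Q$, we have $B \in \mathbb{F}_q[t]$ with $\deg B \leq N-d-1$, and
\[
GA - F = G(A-Q) - R = GB - R.
\]
Moreover the assignment $A \mapsto B$ is a bijection between $\mathcal{M}_{N-d}$ and $\{B \in \mathbb{F}_q[t] : \deg B \leq N-d-1\}$. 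So the count in question equals the number of such $B$ satisfying $\deg(GB - R) \leq h$.

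Now I split on the two cases. In the case $d \leq h+1$, we have $\deg R \leq d-1 \leq h$. If $\deg B \leq h-d$ (including $B=0$), then $\deg GB \leq h$ and hence $\deg(GB-R) \leq h$. Conversely, if $\deg B \geq h-d+1$, then $\deg GB \geq h+1 > \deg R$, so $\deg(GB-R) = \deg GB > h$. The count is therefore exactly the number of $B \in \mathbb{F}_q[t]$ with $\deg B \leq h-d$, which equals $q^{h-d+1}$ (the upper bound $h-d \leq N-d-1$ follows from $h \leq N-1$, so the constraint $\deg B \leq N-d-1$ is vacuous here).

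In the case $d \geq h+2$, for any $B \neq 0$ we have $\deg GB \geq d > h \geq \deg R$ (and actually $\deg R < d = \deg GB$), forcing $\deg(GB - R) = \deg GB > h$. Hence only $B = 0$ can possibly work, which corresponds to the single polynomial $A = Q$, and this works if and only if $\deg R \leq h$. Either way, the count is at most $1$, completing the proof. There is no serious obstacle here; the only subtlety is keeping track of the bookkeeping when $d = h+1$ and when $h - d < 0$, which the split into cases handles cleanly.
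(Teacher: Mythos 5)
Your proof is correct. It takes a slightly different route from the paper's: the paper writes out the coefficients of $G$, $A$, $F$ explicitly and observes that the condition $GA\in\mathcal{I}(F,h)$ prescribes the top coefficients of $GA$, yielding a lower-triangular linear system in the high coefficients of $A$ with nonzero diagonal entries $g_d$, which it then solves or observes to be overdetermined. You instead invoke the division algorithm $F=GQ+R$ and the bijection $A\mapsto B=A-Q$, reducing the count to the number of $B$ with $\deg B\leq N-d-1$ and $\deg(GB-R)\leq h$, which splits cleanly by comparing $\deg(GB)$ with $h$ and $\deg R$. The two arguments encode the same underlying fact (the triangular system in the paper is exactly the division algorithm unrolled), but yours is more structural and has the added benefit of exhibiting the quotient set $\{A: GA\in\mathcal{I}(F,h)\}$ explicitly as $Q$ plus the ball of radius $h-d$, i.e.\ the interval quotient $\mathcal{I}(F,h)/G=\mathcal{I}(\widetilde F,h-d)$ with $\widetilde F=Q$, which the paper only records separately in the discussion following the lemma. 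Your bookkeeping in the boundary cases ($d=h+1$, $B=0$) is handled correctly.
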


\begin{proof}
Write $G=\sum_{i=0}^d g_i t^i$, $A=\sum_{j=0}^{N-d} a_j t^j$ and $F=\sum_{r=0}^N f_r t^r$.
The condition $GA\in I(F,h)$ means that the coefficients of $t^{h+1},\dots,t^N$ of $GA$ are prescribed (equal to $f_{h+1},\dots,f_N$), while the lower coefficients may vary.
Collecting coefficients yields a linear system over $\mathbb{F}_q$ whose unknowns are the coefficients
$a_{h+1-d},\dots,a_{N-d}$ (those $a_j$ with $j\geq h+1-d$). The coefficient matrix is lower triangular with diagonal entries all equal to $g_d\ne0$ whenever $h+1\geq d$, so  that system has a unique solution for the high coefficients; the remaining free coefficients are $a_0,\dots,a_{h-d}$ (if any), giving exactly $q^{h-d+1}$ choices. If $h+1<d$, then the system is overdetermined with $\leq 1$ solution.
\end{proof}

Given an interval $[a,b] \subseteq (0,\infty)$ and integer $y \in \mathbb{N}$, the number of  $x \in \mathbb{N}$ such that $xy \in [a,b]$ is $\left\lfloor (b-a)/y \right\rfloor$ when $b - a > y$ and at most 1 otherwise. This matches \cref{lem:interval-divisors} and, moreover, the set of solutions $x$ belongs to $[a/y,b/y] \subseteq (0,\infty)$.  The interval $[a/y,b/y]$ can be viewed as the ``interval quotient'' of $[a,b]$ by $y$. We will similarly want to parametrize the corresponding set of solutions $A$ in \cref{lem:interval-divisors} as an ``interval quotient'', so we introduce a convenient definition. 

For any interval $\mathcal{I}(F,h)$   of radius $h$ about a monic polynomial $F$ of degree $N$ and any monic polynomial $G$ of degree $d$, define the \textit{interval quotient of $\mathcal{I}(F,h)$ by $G$} to be  
\begin{equation}\label{def:interval-quotient}
\mathcal{I}(F,h)/G := \{A \in \mathcal{M} : AG\in \mathcal{I}(F,h)\},
\end{equation}
so \cref{lem:interval-divisors} counts  $|\mathcal{I}(F,h)/G|$. Writing $F=G \widetilde F + R$ with $\deg R\leq d-1$, we have that 
\[
\mathcal{I}(F,h)/G =
\mathcal{I}(\widetilde F,\,h-d), 
\]
so if $d \leq h+1$ then the interval quotient is itself an interval of radius $h-d$ about the monic polynomial $\widetilde F$ of degree $N-d$ (necessarily determined by high degree coefficients of $F$ and $G$).

\subsection{Sieve estimates} We record a version of Selberg's  sieve   due to Webb \cite[Theorem 1]{Webb-1983}.
\begin{lemma}[Selberg sieve]{\label[lemma]{lem:selberg-sieve}}

     Let $\mathcal{A}$ be a multiset of polynomials in $\mathbb{F}_q[t]$, and let $\mathcal{Q}$ be a finite set of monic irreducibles in $\mathbb{F}_q[t]$. Suppose that $g$ is a multiplicative function defined on the squarefree divisors of $\prod_{Q \in \mathcal{Q}} Q$ with $1<$ $g(Q) \leq|Q|$ for each $Q \in \mathcal{Q}$, and write

$$\sum_{\substack{A \in \mathcal{A} \\ D \mid A}} 1=\frac{| \mathcal{A}|}{g(D)}+R_D$$
for some real number $R_D$. Let $\mathcal{D}$ be any nonempty subset of the monic divisors of $\prod_{Q \in \mathcal{Q}} Q$ which is divisor closed (i.e., every monic divisor of an element of $\mathcal{D}$ belongs to $\mathcal{D}$ ). Then
\begin{align*}
& \sum_{\substack{A \in \mathcal{A} \\
\operatorname{gcd}\left(A, \prod_{Q \in \mathcal{Q}} Q\right)=1}} 1 \leq \frac{|\mathcal{A}|}{\sum_{D \in \mathcal{D}} g(D)^{-1} \prod_{Q \mid D}\left(1-g(Q)^{-1}\right)^{-1}}  +\sum_{D_1, D_2 \in \mathcal{D}}\left|X_{D_1} X_{D_2} R_{\left[D_1, D_2\right]}\right|,
\end{align*}
where
$$X_D=\mu(D) g(D) \frac{\sum_{C \in \mathcal{D}, D \mid C} g(C)^{-1} \prod_{Q \mid C}\left(1-g(Q)^{-1}\right)^{-1}}{\sum_{C \in \mathcal{D}} g(C)^{-1} \prod_{Q \mid C}\left(1-g(Q)^{-1}\right)^{-1}},$$
and $\mu$ is the M\"obius function defined by $\mu(D)=(-1)^{\omega(D)}$ if $D$ is squarefree and $\mu(D)=0$ otherwise.
\end{lemma}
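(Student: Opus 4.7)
The plan is to follow Selberg's classical $\Lambda^2$ upper-bound construction, verifying that each step passes cleanly from $\mathbb{Z}$ to $\mathbb{F}_q[t]$. First I would introduce real weights $\lambda_D$ indexed by $D \in \mathcal{D}$, normalized by $\lambda_1 = 1$. Using the tautology
\[
\mathbbm{1}\big(\gcd(A,\textstyle\prod_{Q \in \mathcal{Q}} Q) = 1\big) \leq \Big(\sum_{D \mid \gcd(A, \prod Q)} \lambda_D\Big)^2,
\]
which holds because only the $D=1$ term survives on the left and the right side is a square, I would sum over $A \in \mathcal{A}$, expand the square, and substitute the hypothesis $\sum_{A \in \mathcal{A},\, D \mid A} 1 = |\mathcal{A}|/g(D) + R_D$. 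This produces
\[
S \leq |\mathcal{A}| \sum_{D_1,D_2 \in \mathcal{D}} \frac{\lambda_{D_1} \lambda_{D_2}}{g([D_1,D_2])} + \sum_{D_1,D_2 \in \mathcal{D}} \lambda_{D_1} \lambda_{D_2} R_{[D_1,D_2]}.
\]

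The heart of the argument is diagonalizing the quadratic form in the main term. Since $g$ is multiplicative and the $D_i$ are squarefree, $g([D_1,D_2]) = g(D_1)g(D_2)/g((D_1,D_2))$. I would define $H$ multiplicatively by $H(Q) = g(Q)-1$, so that $g(D) = \sum_{E \mid D} H(E)$ for squarefree $D$; substituting this convolution identity for $g((D_1,D_2))$ separates the sum, yielding
\[
\sum_{D_1,D_2} \frac{\lambda_{D_1} \lambda_{D_2}}{g([D_1,D_2])} = \sum_{E} H(E) \Big(\sum_{\substack{D \in \mathcal{D} \\ E \mid D}} \frac{\lambda_D}{g(D)}\Big)^{2}.
\]
Setting $y_E := H(E) \sum_{E \mid D \in \mathcal{D}} \lambda_D/g(D)$ converts the problem into minimizing $\sum_E y_E^2 / H(E)$. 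The constraint $\lambda_1 = 1$ becomes $\sum_E \mu(E) y_E/H(E) = 1$ after Möbius inversion (using $\mathcal{D}$'s divisor-closedness to ensure the inversion is internal to $\mathcal{D}$), and Cauchy–Schwarz forces the optimum $y_E = \mu(E)/T$ with $T := \sum_{D \in \mathcal{D}} 1/H(D)$. The minimum value is $1/T$, which is exactly the reciprocal of the denominator appearing in the statement once one rewrites $1/H(D) = g(D)^{-1} \prod_{Q \mid D}(1-g(Q)^{-1})^{-1}$.

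Unraveling $\lambda_D$ via the same Möbius inversion produces $\lambda_D = \mu(D) g(D) \cdot T^{-1} \sum_{D \mid C \in \mathcal{D}} 1/H(C)$, which matches the expression for $X_D$ in the statement. Substituting the optimizer into both terms of the inequality above, bounding the error term trivially by $\sum_{D_1,D_2} |X_{D_1} X_{D_2}| |R_{[D_1,D_2]}|$, gives precisely the stated conclusion.

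The main obstacle, and the reason the function-field statement is not literally a corollary of the integer case, is verifying that the combinatorial identities (Möbius convolution, multiplicativity of $g$ on squarefree divisors of $\prod_{Q \in \mathcal{Q}} Q$, and the fact that squarefree monic divisors in $\mathbb{F}_q[t]$ form a lattice under gcd/lcm exactly as in $\mathbb{Z}$) carry over without change; once this is checked, the proof is a direct transcription. I would also double-check that the hypothesis $1 < g(Q) \leq |Q|$ ensures $H(Q) = g(Q) - 1 > 0$, so that $T > 0$ and division is justified throughout.
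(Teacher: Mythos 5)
Your proposal is the standard Selberg $\Lambda^2$ diagonalization and it is correct; the paper does not actually prove this lemma but quotes it directly from Webb (Theorem 1 of the cited paper), whose argument is exactly the one you outline. The two points that genuinely need care in the function-field transcription --- that divisor-closedness of $\mathcal{D}$ keeps the M\"obius inversion internal to $\mathcal{D}$, and that $g(Q)>1$ gives $H(Q)>0$ so the Cauchy--Schwarz optimization and the division by $T$ are legitimate --- are both addressed in your sketch.
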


\begin{remark}
When applying the Selberg sieve, we will always choose the set of divisors $\mathcal D$ so that $R_{[D_1,D_2]}=0$ for $D_1,D_2\in\mathcal D$,
and hence obtain a simplified upper bound. 
\end{remark}

We end this section by recording an elementary estimate for the totient function, which we shall later require after applying Selberg's sieve. 

\begin{lemma} \label[lemma]{lem:totient}
	For any $F\in \mathbb{F}_q[t]$, the totient function $\phi(F) = |F| \prod_{P \mid F} (1-|P|^{-1})$ satisfies
	\[
	\frac{q^N}{(\log N)^2} \ll \phi(F) \leq q^N. 
	\]
\end{lemma}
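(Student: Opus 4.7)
The upper bound $\phi(F) \leq q^N$ (with $N = \deg F$) is immediate from the Euler product, since each factor $(1-|P|^{-1})$ is at most $1$. The content lies entirely in the lower bound. My approach will be to estimate $\log(|F|/\phi(F)) = -\sum_{P \mid F}\log(1-|P|^{-1})$, using the elementary inequality $-\log(1-x) \leq 2x$ valid for $x \in [0,1/2]$ (which applies since $|P|^{-1} \leq 1/2$) to reduce matters to bounding the sum $\sum_{P \mid F} |P|^{-1}$.

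I would then split this sum at a parameter $d \geq 1$ to be chosen. For primes of degree at most $d$, I will extend the sum to all such primes (not just those dividing $F$) and invoke the Chebyshev-type bound $\pi_q(j) \leq q^j/j$ recorded in the preliminaries, giving a contribution of order $\sum_{j \leq d} 1/j = O(\log d)$. For primes of degree greater than $d$, I will use the fact that the degrees of the distinct irreducible divisors of $F$ sum to at most $N$, so there are at most $N/d$ such primes, each contributing at most $q^{-d-1}$ to the sum; this yields a contribution of order $N/(d\, q^{d+1})$.

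Balancing these contributions by choosing $d = \lceil \log_2 N \rceil$, the second term becomes $O(1)$ (since $q \geq 2$ ensures $q^{d+1} \geq N$), while the first is $O(\log\log N)$. Exponentiating then yields $\phi(F) \gg q^N/(\log N)^2$, as claimed. Small values of $N$ (where $\log N$ vanishes or is tiny) are absorbed into the implied constant using $\phi(F) \geq 1$. I do not anticipate any serious obstacle; the only minor decisions are the choice of the cutoff $d$ and the treatment of edge cases. One could in fact sharpen the bound to $\phi(F) \gg q^N/\log N$ via Mertens-type asymptotics over $\mathbb{F}_q[t]$, but the stated $(\log N)^2$ suffices for later use.
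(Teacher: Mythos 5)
Your proof is correct, and it rests on the same two ingredients as the paper's: the inequality $-\log(1-x)\leq 2x$ and the Chebyshev bound $\pi_q(j)\leq q^j/j$, with the exponent $2$ in $(\log N)^2$ coming in both cases from the factor $2$ in the first inequality together with the harmonic sum $\sum_{j\leq d}1/j=\log d+O(1)$. The organization differs slightly: the paper reduces to the extremal ``primorial'' case $F=\prod_{P\in\mathcal{P}_{\leq z}}P$ and bounds $z\ll\log N$ from $\deg F\gg 2^z$, leaving implicit the reduction of a general $F$ to this worst case; you instead split $\sum_{P\mid F}|P|^{-1}$ at the threshold $d=\lceil\log_2 N\rceil$, extending the low-degree part to all primes and noting that $F$ has at most $N/d$ prime divisors of degree $>d$. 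Your version handles arbitrary $F$ directly and so is, if anything, slightly more self-contained. One small point of care: when you summarize the low-degree contribution as ``$O(\log\log N)$'' you should retain the explicit constant, namely $2\log d+O(1)$, since exponentiating an unspecified $O(\log\log N)$ would only give $(\log N)^{O(1)}$; your actual computation does produce the constant $2$, so the stated bound $\phi(F)\gg q^N/(\log N)^2$ follows.
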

\begin{proof}
	The upper bound is immediate as $|F| = q^N$. The lower bound is obtained by analyzing the ``primorials'', namely $F = \prod_{P \in \mathcal{P}_{\leq z}} P$ for $z \geq 1$. For such $F$, the inequality $\log(\frac{1}{1-x}) \leq 2x$ for $0 < x < 1/2$ and the bound $\pi_q(d) \leq q^d/d$ imply that  
\[
\log\Big(\frac{|F|}{\phi(F)}\Big) = \sum_{P \in \mathcal{P}_{\leq z} } \log\Big( \frac{1}{1-|P|^{-1}} \Big) \leq 2 \sum_{P \in \mathcal{P}_{\leq z}} |P|^{-1}  \leq 2 \sum_{d \leq z} \frac{1}{d} = 2 \log z + O(1). 
\]
on the other hand, 
\[
N = \deg F = \sum_{P \in \mathcal{P}_{\leq z}} \deg P = \sum_{d \leq z} d \pi_q(d) \gg \sum_{d \leq z} q^d \geq  \sum_{d \leq z} 2^d \gg 2^z 
\]
which implies that $z \leq \frac{\log N}{\log 2} + z_0$ for some sufficiently large absolute constant $z_0 \geq 1$. Combining our calculations completes the proof. 
\end{proof}

\section{Smooth and Rough Polynomials in an Interval}\label{sec:smoothroughpolys}
The goal of this section is to prove the  explicit Hildebrand inequality (\cref{prop:intervalshift}) and the explicit Chebyshev inequality  (\cref{lem:chebyshev}) described in \S\ref{subsec:ingredients}. In other words, we will establish bounds for the number on smooth and rough polynomials in intervals.  

For $F\in\mathcal M$, recall the definitions of $P^{\pm}(F)$ in \eqref{def:max-degree} and \eqref{def:min-degree}  for the largest and smallest degree of prime factors of $F$.
A polynomial $F$ is called \emph{$d$-smooth} if $P^+(F)\leq d$ and \emph{$z$-rough} if $P^-(F)>z$.  For a set $\mathcal{S} \subseteq \mathcal M_N$, define the counting functions 
\begin{equation} \label{def:Phi-Psi}
\Psi(\mathcal{S},d):=|\{F\in \mathcal{S}:\;P^+(F)\leq d\}|
\quad\text{and}\quad
\Phi(\mathcal{S},z):=|\{F\in \mathcal{S}:\;P^-(F)>z\}|.
\end{equation}
of the $d$-smooth and $z$-rough monic polynomials  respectively.
If $\mathcal{S}=\mathcal M_N$, then we abbreviate 
\[
\Psi(N,d):=\Psi(\mathcal M_N,d)
\quad\text{and}\quad
\Phi(N,z):=\Phi(\mathcal M_N,z).
\]
 
\subsection{Smooth polynomials and the proof of \cref{prop:intervalshift}} We begin with a crude explicit  estimate  for smooth polynomials via Rankin's trick.

\begin{lemma}\label[lemma]{lem:shiu-smooth-rankin} For  $1 \leq d \leq N$, we have 
    \begin{align*}
     \sum_{1 \leq n \leq N} \Psi(n,d)  \leq q^N \exp\Big( - \frac{2N}{3 d} + 4 \log d + 4   \Big). 
    \end{align*}
\end{lemma}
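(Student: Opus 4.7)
The estimate is a classical Rankin-type inequality, so the plan is to apply Rankin's trick in the function field setting. For any parameter $\alpha>0$ and any monic $F$ with $\deg F\leq N$ one has the elementary bound $1\leq (q^N/|F|)^{\alpha}$. Summing this over $d$-smooth polynomials of degree between $1$ and $N$ and using the Euler product (which converges absolutely since $\mathcal{P}_{\leq d}$ is finite) gives
\[
\sum_{n=1}^N \Psi(n,d) \;\leq\; q^{\alpha N}\!\!\!\sum_{\substack{F\in \mathcal{M}\\ P^+(F)\leq d}}\!\!|F|^{-\alpha}
\;=\; q^{\alpha N}\prod_{P\in \mathcal{P}_{\leq d}}\bigl(1-|P|^{-\alpha}\bigr)^{-1}.
\]

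The next step is to choose $\alpha$ so that the prefactor matches the claimed savings $\exp(-2N/(3d))$. Setting
\[
\alpha \;=\; 1 - \frac{2}{3d\log q}
\]
gives $q^{\alpha N}=q^N e^{-2N/(3d)}$, which is precisely the prefactor appearing in the claim. The only remaining task is to verify that the Euler product contributes at most $\exp(4\log d+4)=e^4 d^4$.

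To bound the product, I would take logarithms, expand via $-\log(1-x)=\sum_{k\geq 1}x^k/k$, and invoke the Chebyshev bound $\pi_q(j)\leq q^j/j$ recorded in \S\ref{sec:prelim} to reduce to
\[
\log\prod_{P\in \mathcal{P}_{\leq d}}\!\bigl(1-|P|^{-\alpha}\bigr)^{-1}
\;\leq\; \sum_{k\geq 1}\frac{1}{k}\sum_{j=1}^d \frac{q^{j(1-k\alpha)}}{j}.
\]
For $k=1$, the choice of $\alpha$ gives $q^{j(1-\alpha)}=e^{2j/(3d)}\leq e^{2/3}$ for $j\leq d$, so the $k=1$ layer contributes at most $e^{2/3}(\log d+1)$ by the harmonic sum. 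For $k\geq 2$, $k\alpha$ is bounded away from $1$, so $q^{j(1-k\alpha)}$ decays geometrically in $j$ and sums to a bounded quantity; summing then over $k\geq 2$ yields an absolute constant. Combining and absorbing all constants yields the clean form $4\log d+4$ stated in the lemma.

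\textbf{Main obstacle.} The technical subtlety lies in controlling the $k\geq 2$ layer of the logarithmic expansion uniformly in $q\geq 2$ and $d\geq 1$. In that regime the geometric ratios $q^{1-k\alpha}$ can be close to $1$ when both $q$ and $d$ are small, so one must track carefully that the sum $\sum_{k\geq 2}k^{-1}q^{k(1-k\alpha)}\cdots$ is indeed an absolute constant, with all $q$- and $d$-dependence absorbed into the stated $4\log d+4$. This is the step that dictates the precise numerical constants $\tfrac{2}{3}$, $4$, and $4$ in the statement, and choosing $\alpha$ any more aggressively would spoil the Euler product estimate.
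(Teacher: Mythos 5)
Your route is the paper's route: the identical Rankin bound $\sum_{n\leq N}\Psi(n,d)\leq q^{\alpha N}\prod_{P\in\mathcal P_{\leq d}}(1-|P|^{-\alpha})^{-1}$ with the identical choice $\alpha=1-\tfrac{2}{3d\log q}$. The only divergence is bookkeeping for the Euler product: the paper uses the single inequality $\log\tfrac{1}{1-x}\leq 2x$ (valid for $|x|<2^{-1/3}$) together with $\sum_{P\in\mathcal P_{\leq d}}|P|^{-\alpha}\leq q^{(1-\alpha)d}(\log d+1)=e^{2/3}(\log d+1)$ and $2e^{2/3}\leq 4$, while you expand the logarithm into layers $k\geq 1$.

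The step you flag as the main obstacle and leave unverified is, however, exactly where the difficulty sits, and your stated reason for why it closes is not right. For $k\geq 2$ the pure geometric-series bound $\sum_{j\leq d}q^{j(1-k\alpha)}\leq q^{1-k\alpha}/(1-q^{1-k\alpha})$ is \emph{not} uniformly bounded: for $k=2$ it is of size $\asymp\big((2\alpha-1)\log q\big)^{-1}=\big(\log q-\tfrac{4}{3d}\big)^{-1}$, which is already about $37$ at $(q,d)=(2,2)$. The repair is to keep the $1/j$ you discarded: using $q^{j(1-k\alpha)}\leq q^{1-k\alpha}$ and $\sum_{j\leq d}1/j\leq\log d+1$, the $k\geq 2$ layers contribute at most $(\log d+1)\sum_{k\geq 2}q^{1-k\alpha}/k\leq\tfrac{1}{2}(1-2^{-1/2})^{-1}(\log d+1)\leq 1.71(\log d+1)$ whenever $\alpha\geq 1/2$, i.e. $d\log q\geq 4/3$; adding the $k=1$ layer's $e^{2/3}(\log d+1)$ gives $3.66(\log d+1)\leq 4\log d+4$, so your argument then closes. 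Outside that range — concretely $(d,q)=(1,2)$, where $\alpha\approx 0.038$ — neither your layering nor the paper's inequality (which needs $q^{\alpha}\geq 2^{1/3}$, i.e. $\alpha\geq 1/3$) applies, and in fact the stated bound is false there: for $q=2$, $d=1$, $N=100$ the left side is $\sum_{n\leq 100}(n+1)=5150$ while the right side is $2^{100}e^{-200/3+4}\approx 766$. So your proof shares the paper's blind spot rather than introducing a new one; both need the harmless extra hypothesis $d\log q\geq 4/3$ (or some lower bound on $q^{\alpha}$), which holds in the lemma's only application, where $d\asymp\sqrt{(h+1)/\log q}$ with $q^{h+1}$ large.
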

\begin{proof}  
Let $1/3 < \alpha < 1$ be a parameter which will be specified shortly. By Rankin's trick,  
\begin{align*}
    \sum_{n \leq N} \Psi(n,d) =\sum_{\substack{F\in\mathcal{M}_{\leq N}\\ P^+(F)\leq d}}1
	&    \leq q^{\alpha N}\sum_{\substack{F\in\mathcal{M}\\ P^+(F)\leq d}}\frac{1}{q^{\alpha \deg F}}\\
     &=q^{\alpha N}\prod_{P\in\mathcal{P}_{\leq d}}\left(1+\frac{1}{q^{\alpha\deg P}}+\frac{1}{q^{2\alpha \deg P}}+\cdots\right)\\
     &=q^{\alpha N}\prod_{P\in\mathcal{P}_{\leq d}}\bigg(\frac{1}{1-q^{-\alpha \deg P} } \bigg) \leq  q^{\alpha N}\exp\bigg(2 \sum_{P\in\mathcal{P}_{\leq d}}  \frac{1}{ q^{\alpha \deg P} } \bigg) 
 \end{align*}
using the inequalities $\log(\frac{1}{1-x}) < 2x$ for $|x| < 2^{-1/3}$ and $q^{\alpha \deg P} \geq 2^{1/3}$. By the prime number theorem, 
\[
\sum_{P\in\mathcal{P}_{\leq d}} \frac{1}{q^{\alpha \deg P} } = \sum_{j=1}^d \frac{\pi_q(j)}{ q^{\alpha j} } \leq \sum_{j=1}^d   \frac{q^{(1-\alpha) j} }{j} \leq q^{(1-\alpha) d} \sum_{j=1}^d \frac{1}{j} \leq q^{(1-\alpha) d}  (\log d + 1). 
\]
Overall, this implies that
\[
\sum_{n \leq N} \Psi(n,d) \leq q^N \exp\Big( - (1-\alpha) N \log q + 2 q^{(1-\alpha) d} (\log d +1 ) \Big)
\]
for $\frac13 < \alpha < 1$. The choice $\alpha = 1 - \frac{2}{3d \log q} \geq  1 - \frac{2 \log 2}{3} > 1/3$ yields the result as $2e^{2/3} \leq 4$. 
\end{proof}

We record a strong uniform asymptotic for the number of $d$-smooth polynomials in $\mathcal{M}_N$ due to Gorodetsky \cite[Proposition 1.8 and Theorem 1.10]{OfirSmoothPolys23}, which contains many more useful bounds.

\begin{theorem}[Gorodetsky]\label{thm:gorodetsky}
Let $N\geq d\geq \frac{\log(N\log N)}{\log q}$. Put $u=N/d$ and let $\rho(u)$ be the Dickman
function. Then
\[
\Psi(N,d) = q^N \rho(u)\exp\Big(O\Big(\frac{u\log(u+1)}{d}\Big)\Big).
\]
\end{theorem}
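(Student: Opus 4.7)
The plan is to mirror the classical Hildebrand--Tenenbaum saddle point analysis of $\Psi(x,y)$ in the number field setting, exploiting the Euler product structure of the generating function for $d$-smooth polynomials. I would begin from the identity
\[
\sum_{N \geq 0} \Psi(N,d)\, u^N = \prod_{P \in \mathcal{P}_{\leq d}} \frac{1}{1 - u^{\deg P}},
\]
valid for $|u| < 1$, and extract the $N$-th coefficient via Cauchy's theorem:
\[
\Psi(N,d) = \frac{1}{2\pi i} \oint_{|u| = r} \frac{1}{u^{N+1}} \prod_{P \in \mathcal{P}_{\leq d}} \frac{1}{1 - u^{\deg P}}\, du
\]
for any $0 < r < 1$. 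The strategy is to set $r$ equal to the positive real saddle point of the integrand and apply the saddle point method.

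\textbf{Saddle point analysis.} Writing the integrand as $\exp(\Phi(u))/u$ with $\Phi(u) = -N \log u - \sum_{P \in \mathcal{P}_{\leq d}} \log(1 - u^{\deg P})$, the saddle $r$ is determined by the equation
\[
\sum_{P \in \mathcal{P}_{\leq d}} \frac{(\deg P)\, r^{\deg P}}{1 - r^{\deg P}} = N.
\]
Setting $r = q^{-\alpha}$ and inserting the prime polynomial theorem $\pi_q(j) = q^j/j + O(q^{j/2}/j)$ reduces this to an equation essentially of the form $\sum_{j \leq d} q^{(1-\alpha)j}/(1 - q^{-\alpha j}) \approx N$, which implicitly determines $\alpha$ as a function of $u = N/d$. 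This implicit relation is the function field analog of the saddle equation $e^{\xi(u)} - 1 = u\xi(u)$ associated to the Dickman function, and solving it is precisely what introduces the factor $\rho(u)$ into the final asymptotic.

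\textbf{Main term and error.} A Gaussian approximation at the saddle, with width $\sim 1/\sqrt{r^2 \Phi''(r)}$, converts the contour integral into a main term of size $q^N \rho(N/d)$ plus errors arising from (i) the tails of the contour away from the saddle, (ii) higher-order terms in the Taylor expansion of $\Phi$ around $r$, and (iii) the error $O(q^{j/2}/j)$ in the prime polynomial theorem. A careful bookkeeping of these contributions, combined with de Bruijn's asymptotic $\log \rho(u) = -u(\log u + \log\log(u+2) + O(1))$ for the Dickman function, should yield the multiplicative error $\exp(O(u\log(u+1)/d))$. The hypothesis $d \geq \log(N\log N)/\log q$ is used precisely to ensure that the primes of degree at most $d$ are numerous enough for the discrete sum defining $\Phi$ to be well approximated by its continuous counterpart.

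\textbf{Main obstacle.} The principal difficulty lies in producing the sharp multiplicative error $\exp(O(u\log(u+1)/d))$ rather than a weaker error such as $O(1/d)$ in the exponent. This requires tracking the delicate interplay between the location of the saddle and the behavior of $\rho(u)$ itself, especially in the regime where $u$ is large but still controlled by the hypothesis. In practice, one almost certainly proceeds by a direct adaptation of the Hildebrand--Tenenbaum framework for the integer $\Psi(x,y)$: substituting the zeta function of $\mathbb{F}_q[t]$ for the Riemann zeta function, and the prime polynomial theorem for the classical prime number theorem, and then verifying that the quality of the error term is preserved under this substitution---this last step being where I would expect to spend the bulk of the technical labor.
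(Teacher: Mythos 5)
The paper does not prove this statement: it is quoted verbatim as an external result of Gorodetsky (the citation is to Proposition~1.8 and Theorem~1.10 of \cite{OfirSmoothPolys23}), so there is no in-paper argument to compare your proposal against. Judged on its own terms, your outline identifies the right machinery. The generating function identity $\sum_{N\geq 0}\Psi(N,d)u^N=\prod_{P\in\mathcal P_{\leq d}}(1-u^{\deg P})^{-1}$ is correct, Cauchy's formula and a saddle point at $r=q^{-\alpha}$ is the standard route, and the link between the saddle equation and the equation $e^{\xi(u)}=1+u\xi(u)$ underlying the Dickman function is exactly how $\rho(u)$ enters. This is essentially the strategy in the literature: Gorodetsky's actual argument compares the polynomial generating function with the generating function for $d$-smooth permutations and then imports the saddle-point asymptotics of Manstavi\v{c}ius--Petuchovas for the latter, which is a mild repackaging of the direct saddle-point computation you describe.

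That said, what you have written is a program, not a proof, and the part you defer is precisely the part that carries all the difficulty. The specific multiplicative error $\exp(O(u\log(u+1)/d))$, uniformly down to the threshold $q^d\asymp N\log N$ (i.e.\ $u$ as large as roughly $N\log q/\log(N\log N)$), does not follow from generic saddle-point bookkeeping; in the integer setting the analogous uniformity is simply false in this range, and the function field gain comes from the fact that the zeta function of $\mathbb{F}_q[t]$ is $(1-qu)^{-1}$ with no zeros, so the ``prime polynomial theorem'' error you flag in item (iii) is genuinely smaller than its classical counterpart. Your sketch asserts that ``careful bookkeeping should yield'' the stated error but gives no indication of how the three error sources are balanced against $\log\rho(u)\asymp -u\log u$, which is the entire content of the theorem. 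Since this is a quoted black-box result rather than something the paper establishes, the honest summary is: right approach, correct setup, but the quantitative heart of the argument is missing and should be attributed to (or reconstructed from) the cited source rather than re-derived here.
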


The above result will be applied  in the proof of \cref{lem:filtration}.  To prove \cref{prop:intervalshift}, we  require an inequality similar to the Chebyshev--Hildebrand identity \cite{Hildebrand1984} (see also \cite[(3.6)]{GranvilleSmooth2008}).

\begin{lemma} \label[lemma]{lem:smooth-chebyshev-hildebrand} For $1 \leq d \leq N $, we have
    $$\Psi(N,d) = \frac{1}{N} \sum_{ \substack{ k = 1 \\ m \leq N/k} }^d  \Psi(N-mk, d) k \pi_q(k).$$
\end{lemma}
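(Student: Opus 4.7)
The plan is to evaluate the quantity
\[
T \;:=\; \sum_{\substack{F\in\mathcal{M}_N\\ P^+(F)\leq d}} \deg F
\]
in two different ways, exactly as in the classical Chebyshev--Hildebrand identity over $\mathbb{Z}$ (where $\log n$ plays the role that $\deg F$ plays here). On the one hand, every $F$ counted has $\deg F = N$, so $T = N \cdot \Psi(N,d)$. On the other hand, factoring $F = \prod_i P_i^{a_i}$ gives $\deg F = \sum_i a_i \deg P_i = \sum_{P,m \,:\, P^m \mid F} \deg P$, which after swapping the order of summation will produce the right-hand side.

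Carrying this out, I would write
\[
T \;=\; \sum_{\substack{F\in\mathcal{M}_N\\ P^+(F)\leq d}} \;\sum_{\substack{P,\,m\geq 1\\ P^m \mid F}} \deg P
\;=\; \sum_{P\in\mathcal{P}_{\leq d}} \deg P \sum_{m\geq 1} \bigl|\{F\in\mathcal{M}_N : P^+(F)\leq d,\ P^m\mid F\}\bigr|.
\]
For each prime $P$ of degree $k\leq d$ and each $m\geq 1$ with $mk\leq N$, the map $F \mapsto G := F/P^m$ is a bijection between $\{F\in\mathcal{M}_N : P^m\mid F,\ P^+(F)\leq d\}$ and $\{G\in\mathcal{M}_{N-mk} : P^+(G)\leq d\}$; the key point is that $P^+(F)\leq d$ is equivalent to $P^+(G)\leq d$ precisely because $\deg P = k \leq d$. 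Hence the inner cardinality equals $\Psi(N-mk, d)$ (and vanishes trivially when $mk > N$).

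Grouping primes by degree via $|\mathcal{P}_k| = \pi_q(k)$ then yields
\[
T \;=\; \sum_{k=1}^{d} k\,\pi_q(k) \sum_{\substack{m\geq 1\\ mk\leq N}} \Psi(N-mk, d).
\]
Equating this with $N\Psi(N,d)$ and dividing by $N$ produces the stated identity. There is no genuine obstacle; the only subtlety is the bookkeeping of the bijection $F \leftrightarrow F/P^m$ and verifying that the smoothness condition is preserved (which is automatic because $\deg P \leq d$). This is a clean double-counting argument, directly mirroring the integer case.
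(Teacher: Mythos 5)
Your proposal is correct and is essentially identical to the paper's proof: both evaluate $\sum_{F} \deg F$ over $d$-smooth $F \in \mathcal{M}_N$ in two ways, using the identity $\deg F = \sum_{P^m \mid F} \deg P$, the bijection $F \leftrightarrow F/P^m$ (with the observation that smoothness is preserved since $\deg P \leq d$), and grouping primes by degree via $\pi_q(k)$.
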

\begin{proof}

For $F\in\mathcal M_N$ with $P^+(F)\leq d$, we start with the unique factorization identity 
\begin{equation} \label{eqn:ufd}
\deg F=\sum_{P\in\mathcal P_{\leq d}}\;\sum_{m\geq 1}\deg P\;\mathbbm{1}\!\big(P^m\mid F\big).
\end{equation}
Summing this identity over all $F\in\mathcal M_N$ with $P^+(F)\leq d$ and noting that $\deg F=N$, we obtain
\begin{equation}\label{eq:CH-sum-start}
N\,\Psi(N,d)
=\sum_{\substack{F\in\mathcal M_N\\ P^+(F)\leq d}}\deg F
=\sum_{P\in\mathcal P_{\leq d}}\;\sum_{\substack{m\geq 1\\ m\deg P\leq N}}
\deg P\;|\{F\in\mathcal M_N:\,P^+(F)\leq d,\;P^m\mid F\}|.
\end{equation}
For the inner count, writing $F=P^mG$ shows that $G$ must be a monic polynomial of degree $N-m\deg P$ with $P^+(G)\leq d$, and conversely every such $G$ produces an $F$ with $P^m\mid F$ and $P^+(F)\leq d$. Thus,
\[
\#\{F\in\mathcal M_N:\,P^+(F)\leq d,\;P^m\mid F\}
=\Psi(N-m\deg P,d),
\]
and \eqref{eq:CH-sum-start} becomes
\[
N\,\Psi(N,d)
=\sum_{P\in\mathcal P_{\leq d}}\;\sum_{\substack{m\geq 1\\ m\deg P\leq N}}
\deg P\;\Psi(N-m\deg P,d).
\]
Grouping primes by their degree and using that there are $\pi_q(k)$ primes of degree $k$, we arrive at
\[
N\,\Psi(N,d)
=\sum_{k=1}^{d}\;\sum_{m\leq N/k} k\,\pi_q(k)\,\Psi(N-mk,d),
\]
which is the desired identity after multiplication by $N$.
\end{proof}

We are ready to prove \cref{prop:intervalshift}, that is,  bound the number of smooth polynomials in an interval. The statement can be rewritten in our new notation as 
\begin{align}\label{eqn:hildebrand}
    \Psi(\mathcal{I}(A,h), d) \leq \Psi(h+1,d)
\end{align}
for any $A \in \mathcal{M}_N$ and any integers $d \geq 1$ and $1 \leq h \leq N-1$.

\begin{proof}[Proof of \cref{prop:intervalshift}]
If $d\geq h+1$ then $\Psi(h+1,d)=q^{h+1}=|\mathcal{I}(A,h)|$ and the inequality is immediate, so assume
$d<h+1$. We prove \eqref{eqn:hildebrand} by induction on $N=\deg A$. 
If $N\leq h+1$, then every monic polynomial in $\mathcal{I}(A,h)$ has degree $\leq h+1$,
so $\Psi(\mathcal{I}(A,h),d)\leq\Psi(h+1,d)$. (Note  if $m\deg P=h+1$, then $\mathcal{I}(\widetilde{A}_{P^m},-1)=\{P^m\}$ using the notation below, so the contribution is $1$.) This settles the base case. 

Assume $N \geq h+2$ and that the claim holds for all smaller degrees.
From \eqref{eqn:ufd}, we sum the degrees of all $d$-smooth polynomials in the interval $\mathcal{I}(A,h)$ to obtain the identity 
\begin{align*}
\sum_{\substack{F\in \mathcal{I}(A,h)\\P^+(F)\leq d}} \deg F
&  =
\sum_{P\in\mathcal P_{\leq d}}\sum_{\substack{m\ge1 \\ m\deg P\leq N}} \deg P\cdot |\{ G\in\mathcal M_{N-m\deg P}:\ P^m G\in \mathcal{I}(A,h), P^+(G) \leq d \}|, \\
& = \sum_{P\in\mathcal P_{\leq d}}\sum_{\substack{m\ge1 \\ m\deg P\leq h+1}} \Big( \cdots \Big) + \sum_{P\in\mathcal P_{\leq d}}\sum_{\substack{m\ge1 \\ h+1 < m\deg P \leq N}} \Big( \cdots \Big) = \Sigma_1 + \Sigma_2, 
\end{align*}
say. 

The lefthand side is equal to $N\Psi(\mathcal{I}(A,h),d)$ because each polynomial $F$ on the lefthand side  contributes its full degree $N$.  The righthand side is split into two subsums $\Sigma_1$ and $\Sigma_2$. 

For $\Sigma_2$, we bound the set count $|\{ \cdots \}|  \leq 1$ trivially by \cref{lem:interval-divisors} , so that for a fixed $k=\deg P$, the number of admissible
$m$ with $h+1 < m k \leq N$ is at most $\displaystyle (N-1-h)/k$.
Multiplying by $\deg P=k$ and summing over all primes of degree $k$ gives a contribution at most
$(N-1-h)\pi_q(k)$. Summing over $1 \leq k \leq d$ yields that  
\[
\Sigma_2 \leq (N-1-h)\sum_{k=1}^d \pi_q(k).
\]
For $\Sigma_1$, it follows from our discussion following the interval quotient  definition \eqref{def:interval-quotient} that 
\[
\Sigma_1   = \sum_{P\in\mathcal P_{\leq d}}\sum_{ m\deg P\leq h+1} (\deg P) \Psi\big( \mathcal{I}(\widetilde{A}_{P^m},h-m\deg P), d\big),
\]
where \(\widetilde{A}_{P^m}\) is the unique monic polynomial of degree \(N-m\deg P\) determined by the high coefficients of \(A\) (described in \cref{sec:prelim} and the proof of \cref{lem:interval-divisors}). Apply the induction hypothesis to each interval $\mathcal{I}(\widetilde{A}_{P^m},h-m\deg P)$
since $\deg \widetilde{A}_{P^m}=N-m\deg P< N$ and $h-m\deg P \geq -1$. This  produces the inequality 
\[
\Psi\big( \mathcal{I}(\widetilde{A}_{P^m},h-m\deg P), d\big) \leq \Psi(h+1-m\deg P, d).
\]
Therefore, 
\[
\Sigma_1 \leq \sum_{k=1}^d \sum_{m\leq \frac{h+1}{k}} k\,\pi_q(k)\, \Psi(h+1-mk,d).
\]

Combining our observations implies that
\[
\Psi(\mathcal{I}(A,h),d) \leq   \frac{1}{N}\sum_{k=1}^d\sum_{m\leq \frac{h+1}{k}} k\pi_q(k)\,\Psi(h+1-mk,d) \;+\; \frac{N-1-h}{N}\sum_{k=1}^d\pi_q(k).
\]
By Lemma~\ref{lem:smooth-chebyshev-hildebrand}, the inner double sum equals
$(h+1)\Psi(h+1,d)$, so
\[
\Psi(\mathcal{I}(A,h),d) \leq  \frac{h+1}{N}\Psi(h+1,d) \; + \; \frac{N-1-h}{N}\sum_{k=1}^d\pi_q(k).
\]
Finally observe $\sum_{k=1}^d\pi_q(k)\leq\Psi(h+1,d)$ (by injecting $P\mapsto P t^{h+1-\deg P}$),
so the right-hand side is at most $\Psi(h+1,d)$. This proves the inductive step and completes the proof.
\end{proof}

\subsection{Rough polynomials and the proof of \cref{lem:chebyshev}}
A bound for the number of rough polynomials of degree $N$ was established by Bary-Soroker and Goldgraber \cite[Lemma 2.3]{BarySorokerGoldgraber-2023}. We record this Chebyshev-type estimate below.  
\begin{lemma}[Bary-Soroker--Goldgraber] \label[lemma]{lem:rough-bsg}
    For $1\leq z \leq N/2$, 
    \begin{align*}
      \frac{q^N}{10z+5}  \leq \Phi(N,z)\leq\frac{q^N}{z(1-1/q)}.
    \end{align*}
\end{lemma}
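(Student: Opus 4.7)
The statement gives matching upper and lower bounds on $\Phi(N,z)$, both of order $q^N/z$ in agreement with Mertens' theorem over $\mathbb{F}_q[t]$. I would treat the two bounds separately.

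\textbf{Upper bound.} This is essentially a restatement of \cref{lem:chebyshev} in the special case of a ``long'' interval. Applying \cref{lem:chebyshev} with $h = N-1$ and any $A \in \mathcal{M}_N$ (say $A = t^N$), the interval $\mathcal{I}(A, h)$ equals all of $\mathcal{M}_N$, while the constraint $1 \leq z \leq (h+1)/2 = N/2$ matches the hypothesis here exactly. The bound therein then yields $\Phi(N,z) \leq q^{h+1}/(z(1-1/q)) = q^N/(z(1-1/q))$ directly.

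\textbf{Lower bound.} The natural starting point is M\"obius inversion. From $\mathbbm{1}(P^-(F) > z) = \prod_{P \in \mathcal{P}_{\leq z}}(1 - \mathbbm{1}(P \mid F))$, expanding and summing over $F \in \mathcal{M}_N$ yields
\[
\Phi(N, z) = \sum_{\substack{D \in \mathcal{M}\ \text{sqfree}\\ P \mid D \,\Rightarrow\, \deg P \leq z\\ \deg D \leq N}} \mu(D)\, q^{N - \deg D} \;=\; q^N \prod_{P \in \mathcal{P}_{\leq z}}\!\bigl(1 - q^{-\deg P}\bigr) \;-\; q^N \, \mathcal{T}(N,z),
\]
where $\mathcal{T}(N,z)$ is the M\"obius tail sum over squarefree $z$-smooth $D$ with $\deg D > N$. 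A function-field Mertens estimate for the main term, using $\log(1-x) \geq -x - x^2$ (for $x \leq 1/2$), $\pi_q(k) \leq q^k/k$, and $\sum_{k \leq z} 1/k = \log z + O(1)$, gives $\prod_{P \in \mathcal{P}_{\leq z}}(1 - q^{-\deg P}) \geq c/z$ for an absolute constant $c > 0$. Combined with an absolute tail bound $|\mathcal{T}(N,z)| \leq c'/z$ for some $c' < c$, this yields $\Phi(N,z) \geq (c - c') q^N/z$; explicit constant-tracking then delivers the claimed $q^N/(10z+5)$.

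\textbf{Main obstacle.} Controlling the tail $|\mathcal{T}(N,z)|$ is the delicate step. Since $\deg P_z = \sum_{k \leq z} k \pi_q(k) \asymp q^z$ typically exceeds $N$ by a wide margin, the M\"obius sum is severely truncated at $\deg D = N$, so the tail is not a priori negligible. One route is Rankin's trick: $|\mathcal{T}(N,z)| \leq q^{-\sigma N} \prod_{P \in \mathcal{P}_{\leq z}}(1 + q^{-(1-\sigma)\deg P})$ for $\sigma \in (0,1)$, optimizing $\sigma$ in terms of the ratio $N/z$. A second route bounds $|\mathcal{T}| \leq \sum_{d > N} \widetilde{\Psi}(d, z)\, q^{-d}$ using Dickman-type estimates on smooth squarefree polynomials (\cref{thm:gorodetsky}), exploiting the decay $\rho(d/z)$ once $d/z \geq 2$. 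The hypothesis $z \leq N/2$ is precisely what provides enough slack for either route to make the tail strictly dominated by the $c/z$ main term. A conceptually distinct alternative is the Buchstab-type decomposition $q^N = \sum_{d=0}^{N} \Psi(d,z)\, \Phi(N-d, z)$ combined with the upper bound from \cref{lem:chebyshev} and the smooth-polynomial estimate \cref{lem:shiu-smooth-rankin}, though care is required to avoid cancellation that would render the induction vacuous.
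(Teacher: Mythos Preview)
The paper does not give its own proof of this lemma; it is attributed to Bary-Soroker--Goldgraber \cite[Lemma 2.3]{BarySorokerGoldgraber-2023} and merely recorded as a known result. So there is no in-paper argument to compare against.

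Your upper bound is correct and there is no circularity: the proof of \cref{lem:chebyshev} in the paper is a self-contained Selberg sieve computation that does not invoke \cref{lem:rough-bsg}, so specializing it at $h=N-1$ is legitimate. In fact the paper presents \cref{lem:chebyshev} precisely as an extension of this same upper bound to short intervals, so your deduction is the intended relationship read in reverse.

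Your lower bound, however, is a roadmap rather than a proof. You correctly isolate the M\"obius tail $\mathcal{T}(N,z)$ as the crux and list three candidate attacks (Rankin, Dickman/\cref{thm:gorodetsky}, Buchstab), but none is carried out, and the phrase ``explicit constant-tracking then delivers the claimed $q^N/(10z+5)$'' is exactly the missing content. For instance, the Rankin route requires choosing $\sigma$ as a function of $N/z\geq 2$ and verifying that the resulting tail constant is \emph{strictly} smaller than the Mertens constant $c$; this is genuine work, not bookkeeping. Likewise, the Buchstab route $q^N=\sum_d \Psi(d,z)\Phi(N-d,z)$ combined with the upper bound from \cref{lem:chebyshev} gives $q^N \leq \Phi(N,z)+\sum_{d\geq 1}\Psi(d,z)\cdot \tfrac{2q^{N-d}}{z}$, and extracting a nontrivial lower bound from this requires showing $\sum_{d\geq 1}\Psi(d,z)q^{-d}$ is bounded by something like $Cz$ with an explicit $C$---which is true but again needs to be done. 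As written, the lower-bound half identifies the right difficulty but does not resolve it.
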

We extend their upper bound to establish  a short interval Chebyshev-type estimate (\cref{lem:chebyshev}). 
 
\begin{proof}[Proof of \cref{lem:chebyshev}] Recall $A \in \mathcal{M}_N$, $0 \leq h \leq N_1$ and $1 \leq z \leq (h+1)/2$. According to the notation of \cref{lem:selberg-sieve}, we wish to sift the short interval  $\mathcal{A} := \mathcal{I}(A,h)$ or, equivalently, to estimate  
\[
\Phi(\mathcal{A},z) = |\{A \in \mathcal{A} : (A, R) = 1\}| \quad \text{ where } R = \prod_{P \in \mathcal{P}_{\leq z}} P. 
\]   Let $\mathcal{D}$ be the set of square-free polynomials of degree $\leq z$, which is a divisor-closed subset of the divisors of $R$. For $D_1, D_2 \in \mathcal{D}$, we have $\deg([D_1, D_2]) \leq 2z \leq h+1$ so   \cref{lem:interval-divisors} implies that 
\[
|\mathcal{A}_D| := |\{G \in \mathcal{M}: DG \in \mathcal{A}\}| = \frac{q^{h+1}}{q^{\deg D}} =  \frac{|\mathcal{A}|}{|D|}
\]
 where $D = [D_1, D_2]$. Thus, in the notation of  \cref{lem:selberg-sieve}, we define the local density as $g(D) = |D|$, so the remainder  is given by $R_{[D_1, D_2]} = 0$ for $D_1, D_2 \in \mathcal{D}$. By \cref{lem:selberg-sieve}, we have that 
\begin{align}\label{eq:roughsieve}
	\Phi(\mathcal{A},z) = \sum_{\substack{A \in \mathcal{A} \\ (A, R) = 1}} 1 \leq \frac{|\mathcal{A}|}{\sum_{D \in \mathcal{D}} |D|^{-1} \prod_{Q \mid D}\left(1-|Q|^{-1}\right)^{-1}}.
\end{align}
Thus, it suffices to bound the denominator of \eqref{eq:roughsieve} from below. Recalling the identity 
\begin{align}\label{eq:totientfunction}
	\phi(D)=|D|\prod_{P\mid D}(1-|P|^{-1}),
\end{align}
(see, e.g., \cite[Proposition 1.7]{Rosen-FF}), where $\phi(D)=|(\mathbb{F}_q[t]/(D))^\times|$ is the totient function, the denominator of \eqref{eq:roughsieve} simplifies to $\sum_{D \in \mathcal{D}} 1/\phi(D)$. 
Since $|\mathcal A|=q^{h+1}$,
it remains to lower bound this denominator. For $i\geq 1$, the number of squarefree monic
polynomials of degree $i$ equals $q^i-q^{i-1}=q^i(1-\tfrac1q)$, and for $i=0$ there is exactly one.
Hence
\[
\sum_{D\in\mathcal D}\frac{1}{\phi(D)}
\;=\;\sum_{i=0}^z \sum_{\substack{D\in\mathcal M_i\\D\text{ squarefree}}} \frac{1}{\phi(D)}
\;\geq\; \sum_{i=0}^z \sum_{\substack{D\in\mathcal M_i\\D\text{ squarefree}}}\frac{1}{|D|}
\;=\; 1 + \sum_{i=1}^z \big(1-\tfrac{1}{q}\big)
\;=\; 1 + z\big(1-\tfrac{1}{q}\big).
\]
In particular, this is $\geq z(1-\tfrac1q)$ for $z\geq 1$. Collecting our observations completes the proof. 
\end{proof}

\section{Central limit theorems for Subsets}\label{sec:clts}
Motivated by Harper \cite{Harper-2013} and Soundararajan--Xu \cite{SoundXu-2023}, we will exploit a martingale difference sequence structure exhibited by partial sums of random multiplicative functions.  We record a complex-valued version of a central limit theorem for martingales by McLeish \cite{Mcleish-1974}, which was developed in \cite[Theorem 2.2]{SoundXu-2023}.  

\begin{theorem}[Complex-Valued McLeish CLT]\label{thm:mcleishclt}
Let $Z_1,\dots,Z_N$ be a complex-valued martingale difference sequence with $\mathbb E[|Z_n|^4]<\infty$
for every $n$, and put $S_N=\sum_{n=1}^N Z_n$. Assume
\({
\sum_{n=1}^N \mathbb E[|Z_n|^2] = 1.
}\)
Then for real $t_1,t_2$ with $t^2=(t_1^2+t_2^2)/2$,
\begin{align*}
\mathbb{E}\Big[e^{i t_1\operatorname{Re}(S_N)+i t_2\operatorname{Im}(S_N)}\Big]
= e^{-t^2/2} & + O\Big(e^{t^2}\Big(\Big(\sum_{n=1}^N\mathbb E[|Z_n|^4]\Big)^{1/4} + \Big(\mathbb E\Big[\Big(\sum_{n=1}^N |Z_n|^2 - 1\Big)^2\Big]\Big)^{1/2}\Big) \Big) \\
& \qquad + O\Big( \max_{\phi\in[0,2\pi]}\Big(\mathbb E\Big[\Big(\sum_{n=1}^N(e^{-i\phi}Z_n^2+e^{i\phi}\overline{Z_n}^2)\Big)^2\Big]\Big)^{1/2} \Big).
\end{align*}
\end{theorem}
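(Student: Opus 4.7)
The plan is to reduce the complex-valued claim to a real-valued martingale CLT applied to the single real MDS $\xi_n := t_1\operatorname{Re}(Z_n) + t_2\operatorname{Im}(Z_n)$, which satisfies $\sum_n \xi_n = t_1\operatorname{Re}(S_N) + t_2\operatorname{Im}(S_N)$. The key algebraic step is to relate $\sum_n \xi_n^2$ to the complex statistics $\sum_n |Z_n|^2$ and $\sum_n Z_n^2$ appearing in the error terms of the theorem.

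A direct computation with $Z_n = \operatorname{Re}(Z_n) + i\operatorname{Im}(Z_n)$ yields the pointwise identity
\[
\xi_n^2 = t^2 |Z_n|^2 + \operatorname{Re}(\alpha Z_n^2),
\]
where $\alpha$ is a complex number depending only on $t_1, t_2$ with $|\alpha| = t^2$. Writing $\alpha = t^2 e^{-i\phi_0}$ and summing over $n$ gives
\[
\sum_{n=1}^N \xi_n^2 - t^2 = t^2\Big(\sum_{n=1}^N |Z_n|^2 - 1\Big) + \tfrac{t^2}{2}\sum_{n=1}^N\big(e^{-i\phi_0} Z_n^2 + e^{i\phi_0}\overline{Z_n}^2\big),
\]
so $\mathbb{E}[(\sum_n \xi_n^2 - t^2)^2]^{1/2}$ is controlled by the two variance-type quantities in the conclusion, with the maximum over $\phi$ absorbing $\phi_0$.

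Next I would carry out McLeish's argument for the real MDS $(\xi_n)$. Set $T_N := \prod_{n=1}^N (1 + i\xi_n)$; the MDS property gives $\mathbb{E}[T_N] = 1$. The Taylor estimate $|e^{ix} - (1+ix)e^{-x^2/2}| \ll |x|^3$ for real $x$, combined with the pointwise bound $|1+ix|^2 \leq e^{x^2}$, yields after taking products
\[
\Big| e^{i\sum_n \xi_n} - T_N e^{-\frac12\sum_n \xi_n^2} \Big| \ll \sum_n |\xi_n|^3.
\]
Taking expectations and using $|\xi_n| \ll |t|\,|Z_n|$, together with Hölder's inequality to trade the cubic for the quartic moment, contributes an error matching the $(\sum_n \mathbb{E}|Z_n|^4)^{1/4}$ term in the stated bound.

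Finally, the main term is handled by writing
\[
\mathbb{E}\big[T_N e^{-\frac12\sum \xi_n^2}\big] - e^{-t^2/2} = \mathbb{E}\Big[T_N e^{-\frac12\sum\xi_n^2}\big(1 - e^{\frac12(\sum \xi_n^2 - t^2)}\big)\Big],
\]
and using $|T_N e^{-\frac12\sum \xi_n^2}| \leq 1$, $|1 - e^z| \leq |z|e^{|z|}$, and Cauchy--Schwarz. This bounds the remaining error by $e^{t^2}\,\mathbb{E}[(\sum_n \xi_n^2 - t^2)^2]^{1/2}$, which splits via the identity above into the two variance-deviation terms in the conclusion. The main technical obstacle will be the careful bookkeeping required to maintain the prefactor at exactly $e^{t^2}$ through both the Hölder passage from cubic to quartic moments and the exponentiated-deviation bound, since the stated estimate allows only a single $e^{t^2}$ factor in front of each error term.
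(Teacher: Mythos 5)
First, note that the paper does not prove this statement: it is quoted verbatim from Soundararajan--Xu \cite[Theorem 2.2]{SoundXu-2023}, which in turn is a quantitative, complex-valued form of McLeish's martingale CLT. Your proposal reconstructs essentially the right architecture: the reduction to the single real martingale difference sequence $\xi_n = t_1\operatorname{Re}(Z_n)+t_2\operatorname{Im}(Z_n)$, the product $T_N=\prod_n(1+i\xi_n)$ with $\mathbb E[T_N]=1$, the cubic Taylor remainder feeding the $(\sum_n\mathbb E|Z_n|^4)^{1/4}$ term, and--- crucially for the ``complex-valued'' aspect--- the identity $\xi_n^2 = t^2|Z_n|^2+\operatorname{Re}(\alpha Z_n^2)$ with $\alpha=(t_1-it_2)^2/2$, $|\alpha|=t^2$, which correctly splits $\sum_n\xi_n^2-t^2$ into the two variance-type error terms of the statement. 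That identity checks out, as does the telescoping bound $|e^{i\sum\xi_n}-T_Ne^{-\frac12\sum\xi_n^2}|\ll\sum_n|\xi_n|^3$ (using $|e^{i\xi_j}|=1$ and $|(1+i\xi_j)e^{-\xi_j^2/2}|\leq1$).

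However, the final step as written has a genuine gap. From $|T_Ne^{-\frac12\sum\xi_n^2}|\leq 1$ and $|1-e^z|\leq|z|e^{|z|}$ with $z=\frac12(\sum_n\xi_n^2-t^2)$, Cauchy--Schwarz gives
$\mathbb E\big[|z|e^{|z|}\big]\leq\big(\mathbb E[z^2]\big)^{1/2}\big(\mathbb E[e^{2|z|}]\big)^{1/2}$,
and the second factor is \emph{not} $e^{O(t^2)}$: it involves $\mathbb E[\exp(\sum_n\xi_n^2)]$, which need not even be finite under a fourth-moment hypothesis (the random variable $z$ is only bounded below by $-t^2/2$, not above). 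So the prefactor you obtain is not $e^{t^2}$, and this is not a bookkeeping issue as your last sentence suggests but the actual analytic crux of McLeish-type arguments. The standard repair is a case split (or truncation) on the event $\{\sum_n\xi_n^2\leq Ct^2\}$: on that event $|1-e^{z}|\leq |z|e^{O(t^2)}$ and Cauchy--Schwarz gives the claimed bound, while on the complement one uses Chebyshev together with the same quantity $\mathbb E[(\sum_n\xi_n^2-t^2)^2]$ to show the event is rare, at the cost of also controlling $\mathbb E[|T_N|\mathbbm{1}_{E^c}]$. A second, smaller point you should flag: $\mathbb E[T_N]=1$ requires integrability of $\prod_n(1+i\xi_n)$ and of the partial products against $\xi_m$, which finite fourth moments alone do not supply once $N>4$; this is harmless in the paper's application (each $Z_P$ is a bounded random variable), but a self-contained proof under the stated hypotheses must either add a truncation or remark on this.
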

\begin{remark} A sequence of complex-valued
random variables $(Z_k)_{k\ge1}$ is a \emph{martingale difference sequence} (with respect to a filtratation\ $(\mathcal F_k)_{k \geq 0}$ on a probability space)
if $Z_k$ is $\mathcal F_k$-measurable, $\mathbb E[Z_k\mid\mathcal F_{k-1}]=0$ for all $k\ge1$, and
$\mathbb E[|Z_k|]<\infty$ for all $k$.	
\end{remark}

\subsection{Our martingale difference sequence}  We need to introduce a total ordering on the primes.
Choose a fixed total order \(\prec\) on irreducible polynomials which respects degree
(that is, order by degree, then tie-break lexicographically as needed). Our estimates will not depend on the choice of ordering. For a finite set
\(\mathcal{S}\subset\mathcal M_N\) and a prime \(P\), define
\begin{equation} \label{eqn:filtration-local}
\mathcal{S}_P:=\{F\in \mathcal{S}:\ P^+_\prec(F)=P\},
\end{equation}
where $P^+_{\prec}(F)$ is the maximal prime factor of $F$ with respect to $\prec$. Each such \(F\) factors
uniquely as \(F=P^{v_P(F)}F_{(P)}\) where $v_P(F)$ is the exponent of $P = P^+_{\prec}(F)$ in the prime factorization of $F$, and every prime dividing \(F_{(P)}\) is strictly \(\prec P\).
Hence \(f(F_{(P)})\) is measurable with respect to the
sigma-algebra $\Sigma_P$ generated by \(\{f(Q):Q\prec P\}\), while \(f(P)\) is independent of that
sigma-algebra. 
Define the random variables
\[
Z_P:=\frac{1}{\sqrt{|\mathcal{S}|}}\sum_{F\in \mathcal{S}_P} f(F) \quad \text{ in which case } \quad   \mathbb E[Z_P\mid \Sigma_P ]=0,
\]
because \(\mathbb E[f(P)^m]=0\) for every \(m\ge1\). Ordering the blocks in increasing \(\prec\)-order yields a martingale-difference sequence \((Z_P)\) satisfying \(\sum_P \mathbb E[|Z_P|^2]=1\). 
This is analogous to the martingale decomposition in the number field setting which first appeared in \cite{Harper-2013}, and subsequently in \cite{SoundXu-2023,RandomChowla}. A function field analogue was introduced in \cite{FUSRP2020}, though it is distinct from this decomposition in a crucial way (see \cref{rem:martingale}). 

Indeed, as we shall see, the next lemma will be the crucial advantage  of our choice of martingale. We establish that the filtration defined via $\mathcal{S}_P$ is relatively thin inside any polynomial short interval; informally speaking, the filtration introduces more refined information at each step.

\begin{lemma}\label[lemma]{lem:filtration}
Let $B\in\mathcal{M}_N$ and $P\in\mathcal{P}_d$ for  $1 \leq d \leq N$. Then 
    $$|\mathcal{S}_{P} \cap \mathcal{I}(B,h)| \ll q^{h+1} \exp\Big( - \tfrac{1}{2} \sqrt{(h+1)(\log q)} \Big) \quad \text{ for } 1 \leq h \leq N-1.$$
\end{lemma}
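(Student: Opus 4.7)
The plan is to bound $|\mathcal{S}_P\cap\mathcal{I}(B,h)|$ by a count of $d$-smooth polynomials in the interval quotient $\mathcal{I}(B,h)/P$, and then apply the explicit Hildebrand inequality (\cref{prop:intervalshift}) together with a Rankin's trick analogous to \cref{lem:shiu-smooth-rankin}. First, since $\prec$ respects degree, any $F$ with $P^+_\prec(F)=P$ satisfies $P\mid F$ and has all prime factors of degree at most $d$, hence $F$ is $d$-smooth. Writing $F=PA$ with $A\in\mathcal{M}_{N-d}$, the map $F\mapsto A$ preserves $d$-smoothness and is injective, giving
\[
|\mathcal{S}_P\cap\mathcal{I}(B,h)|\leq |\{A\in\mathcal{M}_{N-d}:\,PA\in\mathcal{I}(B,h),\;A\text{ is }d\text{-smooth}\}|=\Psi\bigl(\mathcal{I}(B,h)/P,\,d\bigr).
\]

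If $d>h+1$, then \cref{lem:interval-divisors} gives $|\mathcal{I}(B,h)/P|\leq 1$, so $|\mathcal{S}_P\cap\mathcal{I}(B,h)|\leq 1$, which lies within the claimed bound. For $d\leq h+1$, the interval quotient equals $\mathcal{I}(\widetilde B,h-d)$ for some $\widetilde B\in\mathcal{M}_{N-d}$ (with $\mathcal{I}(\widetilde B,-1)=\{\widetilde B\}$ in the edge case $d=h+1$), and \cref{prop:intervalshift} yields
\[
\Psi\bigl(\mathcal{I}(\widetilde B,h-d),d\bigr)\leq \Psi(h+1-d,d).
\]
Replaying the Rankin argument from \cref{lem:shiu-smooth-rankin} for a single term in place of a tail sum, with the same choice $\alpha=1-\tfrac{2}{3d\log q}$, produces
\[
\Psi(n,d)\leq q^n\exp\bigl(-\tfrac{2n}{3d}+4\log d+4\bigr).
\]
Specializing to $n=h+1-d$ and factoring out $q^{h+1}$ gives
\[
|\mathcal{S}_P\cap\mathcal{I}(B,h)|\leq q^{h+1}\exp\bigl(-d\log q-\tfrac{2(h+1-d)}{3d}+4\log d+4\bigr).
\]

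It remains to verify that this exponent is at most $-\tfrac{1}{2}\sqrt{(h+1)\log q}+O(1)$ uniformly in $1\leq d\leq h+1$. Using $\tfrac{2(h+1-d)}{3d}=\tfrac{2(h+1)}{3d}-\tfrac{2}{3}$ and the AM--GM inequality,
\[
d\log q+\frac{2(h+1)}{3d}\geq \frac{2\sqrt{6}}{3}\sqrt{(h+1)\log q},
\]
and the constant $\tfrac{2\sqrt{6}}{3}\approx 1.63$ exceeds $\tfrac{1}{2}$ by enough to absorb the $-\tfrac{2}{3}$ additive error and the $4\log d+4$ correction (using $d\leq h+1$ to control $\log d$), provided $(h+1)\log q$ is larger than some absolute constant. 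When $(h+1)\log q$ is bounded, the conclusion is trivial, since $|\mathcal{S}_P\cap\mathcal{I}(B,h)|\leq q^{h+1}$ and $\exp(-\tfrac{1}{2}\sqrt{(h+1)\log q})$ is bounded below. I expect this balancing step to be the main obstacle: the factor $q^{-d}$ delivers savings when $d$ is large while the Rankin-type exponential $e^{-2(h+1)/(3d)}$ delivers savings when $d$ is small, and AM--GM extracts precisely the worst-case square-root saving in $(h+1)\log q$ that appears in the statement.
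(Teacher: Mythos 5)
Your proof is correct and rests on the same three ingredients as the paper's (the interval-quotient structure from \cref{lem:interval-divisors}, the Hildebrand inequality \cref{prop:intervalshift}, and the Rankin bound \cref{lem:shiu-smooth-rankin}), but it organizes the final optimization differently and, in my view, more cleanly. The paper works with two \emph{separate} upper bounds --- $\Psi(\mathcal{I}(B,h),d)$, which carries only the smoothness saving, and $\max(q^{h+1-d},1)$, which carries only the division-by-$P$ saving --- and then splits into cases at the threshold $d=\Delta=\sqrt{(h+1)/\log q}$, using whichever bound wins in each regime. You instead use the single sharper bound $\Psi(\mathcal{I}(B,h)/P,d)\leq q^{-d}\cdot q^{h+1}\exp(-\tfrac{2(h+1-d)}{3d}+4\log d+4)$, which retains both savings simultaneously, and extract the worst case over $d$ by AM--GM; this yields the constant $\tfrac{2\sqrt6}{3}\approx 1.63$ in place of the paper's effective $\tfrac23$ from case (i), with room to spare over the required $\tfrac12$. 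Both arguments need the same closing reduction, namely that the $O(\log(h+1))$ correction is absorbed once $(h+1)\log q$ exceeds an absolute constant, with the bounded range handled trivially, and you state this correctly. The only points worth tightening are cosmetic: \cref{prop:intervalshift} is stated for ambient degree $\geq 2$, so the degenerate cases $N-d\leq 1$ (and the single-point quotient when $d=h+1$) should be checked directly, and the single-term Rankin bound $\Psi(n,d)\leq q^n\exp(-\tfrac{2n}{3d}+4\log d+4)$ follows from \cref{lem:shiu-smooth-rankin} by positivity of the summands when $n\geq d$ and is trivial when $n<d$; neither affects the argument.
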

\begin{remark}
	Recall that the interval $\mathcal{I}(B,h)$ is defined by \eqref{def:interval} and has $q^{h+1}$ elements. 
\end{remark}

\begin{proof} 
As $|\mathcal{I}(B,h)| = q^{h+1}$, we may assume without loss that $q^h$ exceeds a sufficiently large absolute constant. 
 According to  definition \eqref{def:interval-quotient} of the interval quotient, if $F \in \mathcal{S}_{P} \cap \mathcal{I}(B,h)$ for $d \leq h+1$ then $F = P R$ where $R \in \mathcal{I}(B, h)/P$ and $R$ is $d$-smooth of degree $N-d$ as well. If $d \geq h+2$, then there is at most one solution for $R$ by \cref{lem:interval-divisors}. Further, since our total order $\prec$  respects degree ordering and $\deg(P) = d$, $F$ is also $d$-smooth and in $\mathcal{I}(B,h)$, giving two inequalities: 
\begin{equation} \label{eqn:filtration-1st-bound}
    |\mathcal{S}_{P} \cap \mathcal{I}(B,h)| \leq |\{F \in \mathcal{I}(B,h) : P^+(F) \leq d\}| \leq \Psi(\mathcal{I}(B,h), d),
\end{equation}
\begin{equation} \label{eqn:filtration-2nd-bound}
    |\mathcal{S}_{P} \cap \mathcal{I}(B,h)| \leq
    | \{R \in \mathcal{I}(B, h)/P : P^+(R) \leq d \}|
    \leq \max(q^{h+1-d},1),
\end{equation}
where the first inequality upper bounds our set by counting all possible $F$ that are $d$-smooth and in $\mathcal{I}(B,h)$, and the second inequality bounds our set by counting all possible $R$ in an interval of radius $h-d$. We complete the argument by dividing into two cases depending on the parameter
\[
\Delta := \sqrt{ (h+1)/(\log q)}. 
\]
Observe that $0 < \Delta \leq h+1$ as $h \geq 1$ and $q \geq 2$. 
\begin{enumerate}[label=(\roman*)]
    \item \textit{Assume $d \leq \Delta$. }
    
    \noindent  \cref{prop:intervalshift}  and  \eqref{eqn:filtration-1st-bound} imply that   
    \begin{align*}
        |\mathcal{S}_{P} \cap \mathcal{I}(B,h)| &\leq \Psi(\mathcal{I}(A,h), d) \leq \Psi(h+1, d) \leq \Psi(h+1,\Delta).  
    \end{align*}
    By \cref{lem:shiu-smooth-rankin}, the above is 
    \begin{align*} 
        & \ll  q^{h+1}  \exp\Big( - \frac{2(h+1)}{3\Delta} + 4 \log\Delta \Big) \\
        & \ll q^{h+1} \exp\Big( - \frac{2}{3}\sqrt{(h+1) (\log q)}  + O\big(  \log (h+1)  \big) \Big) \\
        & \ll q^{h+1} \exp\Big( - \frac{1}{2} \sqrt{ (h+1)(\log q) }  \Big). 
    \end{align*}
    The last step follows since $q^{h+1}$ exceeds a sufficiently large absolute constant. 

    \item \textit{Assume $d \geq  \Delta$.}
    
    \noindent We use \eqref{eqn:filtration-2nd-bound} 
    to get 
    $$ |\mathcal{S}_{P} \cap \mathcal{I}(B,h)| \leq \max(q^{h+1-\Delta},1) \leq q^{h+1} \exp\Big( -   \sqrt{ (h+1) (\log q)} \Big) $$
    which is  stronger than the desired bound. 
\end{enumerate}
This completes the proof. 
\end{proof}

\subsection{Our main technical theorem and the proof of \cref{thm:main-intro}}
We will use this martingale difference sequence to prove our main technical theorem using \cref{thm:mcleishclt}.

\begin{theorem}\label{thm:main}
 Let $\mathcal{A}=\mathcal{A}_{N,q}$ be a family of subsets of monic polynomials of $\mathbb{F}_q[t]$ of degree $N$.  Let $\mathcal{S}=\mathcal{S}_{N,q}$ be a family of subsets of $\mathcal{A}_{N,q}$. Fix a total ordering $\prec$ on monic irreducible polynomials which respects degree.  Assume that the sets $\mathcal{A}$ and $\mathcal{S}$ satisfy the following conditions as $q^N\to\infty$:

\begin{enumerate}[label=(\roman*)]
    \item $|\mathcal{A} \backslash \mathcal{S}| = o(|\mathcal{A}|)$. \label{thmmain:i}
    \item $|\{ (F_1,F_2,G_1,G_2) \in \mathcal{S}^4 :  F_1 F_2=G_1 G_2 , F_1 \neq G_1, F_1 \neq G_2 \}| = o(|\mathcal{A}|^2)$.  \label{thmmain:ii}
    \item   $|\{F\in \mathcal{S}:\ P^+_\prec(F)=Q\}| = o(|\mathcal{A}|)$ for every monic irreducible $Q$. \label{thmmain:iii}
\end{enumerate}
If $f$ is a Steinhaus random multiplicative function over $\mathbb{F}_q[t]$ then, as $q^N\to\infty$,
\[
Z=\frac{1}{\sqrt{|\mathcal{A}|}}\sum_{F\in \mathcal{A}} f(F)\ \xrightarrow{d}\ \mathcal{CN}(0,1).
\]
\end{theorem}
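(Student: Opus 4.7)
The plan is to invoke the complex-valued McLeish CLT (\cref{thm:mcleishclt}) on the martingale difference sequence $(Z_P)$ introduced just before the theorem, where $Z_P := |\mathcal{S}|^{-1/2} \sum_{F \in \mathcal{S}_P} f(F)$. To reduce from $\mathcal{A}$ to $\mathcal{S}$, I would write $Z = \sqrt{|\mathcal{S}|/|\mathcal{A}|} \sum_P Z_P + R$, where $R := |\mathcal{A}|^{-1/2} \sum_{F \in \mathcal{A} \setminus \mathcal{S}} f(F)$ satisfies $\mathbb{E}|R|^2 = |\mathcal{A} \setminus \mathcal{S}|/|\mathcal{A}| = o(1)$ by \ref{thmmain:i}, while $\sqrt{|\mathcal{S}|/|\mathcal{A}|} \to 1$, so Slutsky's theorem reduces the claim to $\sum_P Z_P \xrightarrow{d} \mathcal{CN}(0,1)$. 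The McLeish normalization $\sum_P \mathbb{E}|Z_P|^2 = |\mathcal{S}|^{-1} \sum_P |\mathcal{S}_P| = 1$ is immediate from orthogonality of Steinhaus characters.

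A crucial simplification is that $\mathbb{E}[Z_P^2 Z_Q^2] = 0$ for all primes $P, Q$: the Steinhaus expectation $\mathbb{E}[f(F_1 F_2 G_1 G_2)]$ vanishes since $\deg(F_1 F_2 G_1 G_2) = 4N \geq 1$. Hence $\mathbb{E}[(\sum_P Z_P^2)^2] = 0$, and McLeish's third error term reduces to $2 \mathbb{E}|\sum_P Z_P^2|^2$. Using the orthogonality relation $\mathbb{E}[f(H_1) \overline{f(H_2)}] = \mathbbm{1}(H_1 = H_2)$, this equals $|\mathcal{S}|^{-2}$ times the count of $(F_1, F_2, G_1, G_2) \in \mathcal{S}^4$ satisfying $F_1 F_2 = G_1 G_2$ together with $P^+_\prec(F_1) = P^+_\prec(F_2)$ and $P^+_\prec(G_1) = P^+_\prec(G_2)$. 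Analogous expansions show that the fourth-moment error $\sum_P \mathbb{E}|Z_P|^4 = |\mathcal{S}|^{-2} \sum_P \mathsf{E}_\times(\mathcal{S}_P)$ and the variance-fluctuation error $\mathbb{E}[(\sum_P |Z_P|^2 - 1)^2]$ reduce (the latter up to a cancelling constant) to $|\mathcal{S}|^{-2}$ times similar counts of multiplicative quadruples in $\mathcal{S}^4$ with slightly different filtration constraints on the maximal primes.

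For each such count, I would split into the \emph{diagonal} solutions $\{F_1 = G_1, F_2 = G_2\} \cup \{F_1 = G_2, F_2 = G_1\}$ and the remainder. The remainder in every case is bounded above by the energy count appearing in \ref{thmmain:ii}, yielding $o(|\mathcal{A}|^2) = o(|\mathcal{S}|^2)$. The diagonal contribution is controlled by $\sum_P |\mathcal{S}_P|^2 \leq (\max_P |\mathcal{S}_P|) \cdot |\mathcal{S}|$, which is $o(|\mathcal{S}|^2)$ by applying \ref{thmmain:iii} uniformly in $P$. For the variance-fluctuation error, a short calculation confirms that the $|\mathcal{S}|^2$ contribution from the unconstrained diagonal solutions $F_1 = G_1, F_2 = G_2$ exactly cancels the subtracted $1$. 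The main technical point is the careful bookkeeping of which filtration constraints $P^+_\prec(F_i) = P^+_\prec(G_j)$ appear in each of the three McLeish error terms; in every case they either hold automatically on the diagonal (so \ref{thmmain:iii} takes over) or are absorbed into \ref{thmmain:ii}. This closely mirrors the structure of the Soundararajan--Xu argument in the integer setting.
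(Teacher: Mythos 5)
Your proposal is correct and follows essentially the same route as the paper: the same martingale decomposition $(Z_P)$ indexed by $P^+_\prec$, the same application of McLeish's CLT, and the same diagonal/off-diagonal splits of the multiplicative-energy counts controlled by conditions \ref{thmmain:ii} and \ref{thmmain:iii}, with \ref{thmmain:i} handling the passage from $\mathcal{A}$ to $\mathcal{S}$. The only cosmetic differences are that you invoke Slutsky's theorem where the paper compares characteristic functions directly via $\mathbb{E}|Z-\widetilde{Z}|^2=o(1)$, and you make explicit the vanishing of $\mathbb{E}[Z_P^2 Z_Q^2]$, which the paper uses implicitly when reducing the third McLeish error term to $\sum_{P,Q}\mathbb{E}[Z_P^2\overline{Z_Q}^2]$.
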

\begin{remark}\label[remark]{rem:martingale} The advantage of our refined filtration with $P_{\prec}^+(F)$ can be seen from condition (iii). Observe that 
\begin{align*}
\max_{Q \in \mathcal{P}} | \{ F \in \mathcal{M}_N : P^+_{\prec}(F) = Q \} | &  \ll q^N \exp\Big( - \tfrac{1}{2} \sqrt{N \log q} \Big), \\
\max_{d \geq 1} | \{ F \in \mathcal{M}_N : P^+(F) = d \} | &  \gg q^N /N, 
\end{align*}
upon applying \cref{lem:filtration} with $h=N-1$ for the first bound, and noting $d=N$ corresponds to the total number of monic irreducibles for the second bound. The latter filtration is based purely on the degree $P^+(F)$  and would require us to assume $|\mathcal{A}| \gg q^N/N$ in \cref{thm:main-intro}.  Aggarwal--Subedi--Verreault--Zaman--Zheng \cite{FUSRP2020} successfully used this filtration in their case, because their desired application for $\mathcal{A}$ was sufficiently large. However, the constraint $|\mathcal{A}| \gg q^N/N$ is too strict for our intended applications. Our refined filtration with $P^+_{\prec}(F)$ allows us to only require $|\mathcal{A}| \gg q^N \exp(- \tfrac{1}{3}\sqrt{N \log q})$ in \cref{thm:main-intro}, which is notably more flexible.  It also more closely mimics Harper \cite{Harper-2013} and his filtration based on  $\{ m \in [1,n] \cap \mathbb{Z} : P^+(m) = p\}$ for any prime $p$, where $P^+(m)$ denotes the largest prime factor of $m \in \mathbb{Z}$.
\end{remark}

\begin{proof}
    We shall show that the characteristic function $\mathbb{E}[e^{it_1\textmd{Re}(Z) + it_2 \textmd{Im}(Z)}]$ of $Z$ approaches $e^{-t^2/2}$, the characteristic function of the complex standard Gaussian in this limit, which establishes the desired convergence in distribution (see, for example, \cite[Chapter 4]{Gut-Probability}). Define
$$\widetilde{Z} = \frac{1}{\sqrt{|\mathcal{S}|}} \sum_{F \in \mathcal{S}} f(F).$$
Writing 
\[
Z-\widetilde{Z}=\frac{1}{\sqrt{|\mathcal{A}|}}\sum_{F\in \mathcal{A} \setminus \mathcal{S}}f(F)
+\Big(\frac{1}{\sqrt{|\mathcal{A}|}}-\frac{1}{\sqrt{|\mathcal{S}|}}\Big)\sum_{F\in \mathcal{S}} f(F),
\] and using orthogonality and the fact that $\mathcal{S}$ and $\mathcal{A}\setminus \mathcal{S}$ are disjoint, we get
\[
\mathbb E[|Z-\widetilde{Z}|^2]
=\frac{|\mathcal{A}\setminus \mathcal{S}|}{|\mathcal{A}|}
+\Big(\frac{1}{\sqrt{|\mathcal{A}|}}-\frac{1}{\sqrt{|\mathcal{S}|}}\Big)^2 |\mathcal{S}| \ll \frac{|\mathcal{A}\setminus \mathcal{S}|}{|\mathcal{A}|}=o(1),
\]
by \ref{thmmain:i}.
Consequently, for fixed $t_1,t_2\in\mathbb R$ with $t^2=(t_1^2+t_2^2)/2$, 
\begin{equation*}
\Big|\mathbb E\big[e^{i t_1\operatorname{Re}(Z) + i t_2 \operatorname{Im}(Z)}\big]
      -\mathbb E\big[e^{i t_1\operatorname{Re}(\widetilde{Z}) + i t_2 \operatorname{Im}(\widetilde{Z})}\big]\Big|
\leq \mathbb E\big[|t_1\Re(Z-\widetilde{Z})+t_2\Im(Z-\widetilde{Z})|\big]
 \ll t\,\Big(\mathbb E\big[|Z-\widetilde{Z}|^2\big]\Big)^{1/2},
\end{equation*}
where we have used $|e^{iu}-e^{iv}|\leq |u-v|$ and Cauchy--Schwarz.
By our previous computation, $\widetilde{Z}$ and $Z$ converge to the same distribution, so it is enough to show the characteristic function of $\widetilde{Z}$ approaches $e^{-t^2/2}$ as $q^N \to \infty$. 

We make use of \cref{thm:mcleishclt} above with the martingale difference sequence \(Z_P\) introduced earlier. 
By orthogonality,
\[
\mathbb E[|Z_P|^2] = \frac{|\mathcal{S}_P|}{|\mathcal{S}|},
\qquad
\sum_P \mathbb E[|Z_P|^2] = \frac{|\mathcal{S}|}{|\mathcal{S}|}=1,
\]
so the unit-variance condition is satisfied. It remains to show that each error term appearing in \cref{thm:mcleishclt} vanishes in the $q^N$ limit.

We have
\[
\sum_P \mathbb E[|Z_P|^4]
= \sum_P \frac{1}{|\mathcal{S}|^2}\sum_{F_1,F_2,G_1,G_2\in \mathcal{S}_P}
\mathbb E\big[f(F_1)\overline{f(F_2)}f(G_1)\overline{f(G_2)}\big].
\]
By orthogonality,
the expectation vanishes unless
\(F_1G_1=F_2G_2\). 
Consider the diagonal solutions (that is, the ordered quadruples for which the multisets \(\{F_1,G_1\}\) and \(\{F_2,G_2\}\)
agree). For each $P$, these
contribute \(O(|\mathcal{S}_P|^2/|\mathcal{S}|^2)\).
The off-diagonal solutions 
are $o(|\mathcal{A}|^2)$ by \ref{thmmain:ii}. Hence, using that $\mathcal{S}_P$ partition $S$ and condition \ref{thmmain:iii},
\[
\sum_P \mathbb E[|Z_P|^4] \ll \frac{1}{|\mathcal{S}|^2}\sum_P |\mathcal{S}_P|^2 + \frac{o(|\mathcal{A}|^2)}{|\mathcal{S}|^2}\leq \frac{o(|\mathcal{A}|)}{|\mathcal{S}|}+\frac{o(|\mathcal{A}|^2)}{|\mathcal{S}|^2}.
\]
By \ref{thmmain:i} this is $o(1)$.

Since \(\sum_P\mathbb E[|Z_P|^2]=1\),
\[
\mathbb E\Big[\Big(\sum_P |Z_P|^2 - 1\Big)^2\Big]
= \mathbb E\Big[\Big(\sum_P |Z_P|^2\Big)^2\Big] - 1,
\]
hence it suffices to prove
$
\mathbb E\Big[\Big(\sum_P |Z_P|^2\Big)^2\Big] = 1 + o(1).
$
Explicitly,
\begin{align*}
\mathbb E\Big[\Big(\sum_P |Z_P|^2\Big)^2\Big]
&= \sum_{P,Q}\mathbb E[|Z_P|^2|Z_Q|^2] \\
&= \frac{1}{|\mathcal{S}|^2}\sum_{P,Q}\sum_{\substack{F_1,F_2\in \mathcal{S}_P\\ G_1,G_2\in \mathcal{S}_Q}}
\mathbb E\big[f(F_1)\overline{f(F_2)}f(G_1)\overline{f(G_2)}\big].
\end{align*}
Once again, the contributions come from solutions in $F_1,F_2\in \mathcal{S}_P$, $G_1,G_2\in \mathcal{S}_Q$ to $F_1G_1=F_2G_2$. When $F_1=F_2$ (so $G_1=G_2$), then we get a diagonal contribution of exactly $1$. The rest of the contributions can be analyzed as in the first error term, combining conditions \ref{thmmain:ii} and \ref{thmmain:iii} to show they are negligible in the $q^N$ limit.

Finally,
\[
\max_{\phi}\ \mathbb E\Big[\Big(\sum_P (e^{-i\phi}Z_P^2+e^{i\phi}\overline{Z_P}^2)\Big)^2\Big]
\ll \sum_{P,Q}\mathbb E[Z_P^2\overline{Z_Q}^2].
\]
This time,
expanding each \(Z_P\) and \(Z_Q\) and evaluating the expectations reduces the sum to counting
quadruples $F_1,F_2\in \mathcal{S}_P$ and $G_1,G_2\in \mathcal{S}_Q$ with \(F_1F_2=G_1G_2\). Hence if $P\neq Q$ these vanish since each side of the equality have a different maximal prime, and for $P=Q$ we get back to the fourth moment case, which we showed to be $o(1)$.
\end{proof}

We conclude by establishing our main theorem. 

\begin{proof}[Proof of \cref{thm:main-intro}]
    We verify the conditions of \cref{thm:main}. Note that assumptions \eqref{main-asymptotic-subset} and \eqref{main-mult-energy} in \cref{thm:main-intro} immediately imply conditions (i) and (ii) of \cref{thm:main}. By \cref{lem:filtration}, since $\mathcal{M}_{N} = \mathcal{I}(B, N-1)$ for any $B \in \mathcal{M}_N$, we have 
    $$|\mathcal{S}_Q|=|\mathcal{S}_Q\cap\mathcal{I}(B,N-1)| \ll q^N\exp(-\tfrac{1}{2}\sqrt{N\log q})$$ 
    and any monic irreducible $Q$. Hence, by assumption \eqref{main-minimum-size}, we have that $|\mathcal{S}_Q| = o(|\mathcal{A}|)$ as $N\to\infty$. This establishes condition (iii) of \cref{thm:main} and hence completes the proof. 
\end{proof}

The remainder of the paper is dedicated to  applying both \cref{thm:main,thm:main-intro} to prove CLTs over short intervals (\cref{thm:CLT-intervals} in \S\ref{sec:interval-CLT}), polynomials with restricted prime factors (\cref{thm:CLT-kprimes} in \S\ref{sec:almostprimesCLT}), shifted primes (\cref{thm:CLT-shiftedprimes} in \S\ref{sec:shiftedprimesCLT}), and rough polynomials (\cref{thm:CLT-rough} in \S\ref{sec:roughpolyCLT}).

\section{Uniform Shiu's theorem: proof of \cref{thm:shiu}}\label{sec:shiu}

Before proceeding to the central limit theorem over short intervals in \S\ref{sec:interval-CLT}, we must establish a function field analogue to Shiu's Brun--Titchmarsh theorem \cite{Shiu-1980}, namely an estimate for partial sums of certain multiplicative functions over function field intervals. Our exposition closely follows Shiu's original paper. To begin, we introduce the relevant class of multiplicative functions.
\begin{definition}\label[definition]{def:shiu} 
    For $\delta>0$, let $\mathscr{M}_\delta$ denote the class of functions $g : \mathcal{M} \to \mathbb{C}$ which are non-negative, multiplicative, and satisfy the following two conditions:
    \begin{enumerate}
        \item There exists a constant $A_1>0$ such that for all integers $\ell \geq 1$ and monic irreducibles  $P\in \mathcal{P}$, we have
        \begin{align*}
            g(P^\ell)\leq A_1^{\ell}. 
        \end{align*}
        \item There exists a constant $A_2 > 0$ (possibly depending on $\delta)$ such that for all monic polynomials $F\in \mathcal{M}$, we have
    \begin{align*}
        g(F)\leq A_2 2^{\delta \deg F}.
    \end{align*}
    \end{enumerate}
\end{definition}

\noindent Note that $\mathscr{M}_{\delta_1} \subset \mathscr{M}_{\delta_2}$ when $\delta_1 < \delta_2$. 

\subsection{Preliminary lemmas} We establish several preliminary lemmas. First, we apply the  second condition in \cref{def:shiu}  to bound the contribution of prime powers. 
\begin{lemma}\label[lemma]{lem:shiu-prime-powers}
    Let $\delta > \frac{1}{2}$ and $0 < k \leq 1-\frac{3}{4\delta}$ be real numbers. If $g \in \mathscr{M}_{k\delta}$, then
    \begin{align*}
    \sum_{P\in\mathcal{P}}\sum_{\ell=2}^\infty\frac{g(P^\ell)}{q^{\delta \ell\deg P}}=O_{k,\delta}(1).
    \end{align*}
    In particular, for $\delta \geq 1$ and $g \in \mathscr{M}_{\delta/4}$, the sum above is bounded uniformly in $\delta$.
\end{lemma}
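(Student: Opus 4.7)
The plan is to exploit condition (2) of the class $\mathscr{M}_{k\delta}$, since it scales well with the prime-counting estimate $\pi_q(d)\leq q^d/d$ and, crucially, yields bounds uniform in $q\geq 2$. (Condition (1) alone would degenerate when $A_1\geq q^\delta$, e.g. for $q=2$.) So I would start by writing $g(P^\ell)\leq A_2\cdot 2^{k\delta\ell\deg P}$, group primes by degree, and rewrite the double sum as
\[
\sum_{P\in\mathcal P}\sum_{\ell=2}^{\infty}\frac{g(P^\ell)}{q^{\delta\ell\deg P}}
\;\leq\; A_2\sum_{d\geq 1}\pi_q(d)\sum_{\ell\geq 2}r_d^{\ell},
\qquad r_d:=\Big(\frac{2^k}{q}\Big)^{\delta d}.
\]

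Next I would record the key geometric bound. Since $k\leq 1-\tfrac{3}{4\delta}$ gives $(1-k)\delta\geq 3/4$, and $q\geq 2$ forces $2^k/q\leq 2^{k-1}$, one obtains the uniform estimate
\[
r_d\;\leq\;2^{-(1-k)\delta d}\;\leq\;2^{-3d/4}\qquad(d\geq 1).
\]
In particular $r_d\leq r_1\leq 2^{-3/4}<1$, so the geometric tail satisfies $\sum_{\ell\geq 2}r_d^\ell=r_d^2/(1-r_d)\leq C_k\,r_d^2$ with $C_k:=1/(1-2^{-3/4})$, a constant depending only on the hypothesis (independent of $\delta,k,q$).

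Plugging in $\pi_q(d)\leq q^d/d$ then produces
\[
\sum_d\pi_q(d)\sum_{\ell\geq 2}r_d^\ell
\;\leq\; C_k\sum_{d\geq 1}\frac{q^d}{d}\Big(\frac{2^k}{q}\Big)^{2\delta d}
\;=\;C_k\sum_{d\geq 1}\frac{q^{(1-2\delta)d}\,2^{2k\delta d}}{d}.
\]
Since $q\geq 2$ and $1-2\delta<0$, one has $q^{(1-2\delta)d}\leq 2^{(1-2\delta)d}$, and the total exponent becomes $1+2\delta(k-1)\leq -\tfrac12$ by the same hypothesis $(1-k)\delta\geq 3/4$. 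Therefore each summand is at most $2^{-d/2}/d$, giving a convergent series bounded by an absolute constant, and the whole expression is $O_{k,\delta}(1)$ (the implicit dependence on $\delta$ entering only through $A_2$).

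For the final sentence, I would verify that $k=1/4$ satisfies $k\leq 1-\tfrac{3}{4\delta}$ whenever $\delta\geq 1$, so the general bound applies with constants depending only on $k=1/4$, which gives uniformity in $\delta$. There is no serious obstacle here; the only delicate point is selecting condition (2) over condition (1) so as to keep the estimates uniform in $q$, and then tracking the hypothesis $k\leq 1-\tfrac{3}{4\delta}$ through the two places where it is needed (bounding $r_d$ away from $1$, and ensuring the post-geometric-series tail decays faster than $2^{-d/2}$).
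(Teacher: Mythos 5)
Your proposal is correct and follows essentially the same route as the paper: both invoke condition (2) of \cref{def:shiu} with exponent $k\delta$, sum the geometric series in $\ell$, apply $\pi_q(d)\leq q^d/d$, and use $(1-k)\delta\geq 3/4$ to force the resulting series exponent down to $-1/2$. The only cosmetic difference is that you keep base $2$ and bound $q$ from below, whereas the paper converts everything to base $q$; the verification of the final sentence via $k=1/4$ matches as well.
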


\begin{proof}
Choose $\varepsilon=k\delta$. By condition $(2)$ in \cref{def:shiu} , there exists $A_2=A_2(\varepsilon)$ such that
\[
g(P^\ell)\leq A_2\,2^{\varepsilon\ell\deg P}\leq A_2\,q^{k\delta\ell\deg P}.
\]
Therefore, 
\[
\frac{g(P^\ell)}{q^{\ell\deg P\delta}}\leq A_2\,q^{-\ell\,\delta(1-k)\deg P}.
\]
It follows by the prime number theorem that $\sum_{P\in\mathcal{P}}\sum_{\ell=2}^{\infty}g(P^\ell)/q^{\ell\deg P\delta}$ is at most
\[
A_2\sum_{P\in\mathcal{P}} \sum_{\ell\ge2} q^{-\ell\,\delta(1-k)\deg P}
= A_2\sum_{P\in\mathcal{P}}\frac{q^{-2\,\delta(1-k)\deg P}}{1-q^{-\delta\,(1-k)\deg P}} \ll_{\epsilon} \sum_{n\geq 1} \frac{\pi_q(n)}{q^{2n\delta(1-k)}}\ll_{\epsilon} \sum_{n\ge1}\frac{1}{n}\Big(q^{\,1-2\delta(1-k)}\Big)^n.
\]
This series is $O_{\epsilon}(1)$ when  \(q^{\,1-2\delta(1-k)} \leq q^{-1/2} \leq 2^{-1/2}\) or equivalently when  \(k \leq 1-3/4\delta\). Since $\epsilon$ depends only on $k$ and $\delta$, this proves the desired claim.  
\end{proof}

Next, we estimate the partial sums of $g\in \mathscr{M}_{1/4}$ in terms of a squarefree product. 
\begin{lemma}\label[lemma]{lem:shiu-rankin-squarefree}
    Let $g\in \mathscr{M}_{1/4}$. For all $N \geq 1$, we have 
	\[
        \sum_{F \in \mathcal{M}_{\leq N}}\frac{g(F)}{q^{\deg F}}\ll \exp\bigg(\sum_{\substack{P\in\mathcal{P}_{\leq N}}}\frac{g(P)}{q^{\deg P}}\bigg).
	\]
\end{lemma}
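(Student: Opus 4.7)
The plan is to dominate the left-hand sum by a formal Euler product over $\mathcal{P}_{\leq N}$, then apply the elementary inequality $\log(1+x) \leq x$ for $x \geq 0$ to split off the desired main term, absorbing the prime-power tail as an $O(1)$ error via \cref{lem:shiu-prime-powers}.

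First, I would observe that every $F \in \mathcal{M}_{\leq N}$ has all of its monic irreducible factors lying in $\mathcal{P}_{\leq N}$. Since $g$ is non-negative, enlarging the sum to all monic polynomials whose prime factors lie in $\mathcal{P}_{\leq N}$ and then using multiplicativity together with unique factorization gives
\[
\sum_{F \in \mathcal{M}_{\leq N}} \frac{g(F)}{q^{\deg F}} \;\leq\; \prod_{P \in \mathcal{P}_{\leq N}} \Big(1 + \sum_{\ell \geq 1} \frac{g(P^\ell)}{q^{\ell \deg P}}\Big).
\]
Convergence of each Euler factor (and thus of the product) is guaranteed by condition $(2)$ of \cref{def:shiu}: since $g \in \mathscr{M}_{1/4}$ and $q \geq 2$, one gets $g(P^\ell)/q^{\ell \deg P} \ll q^{-3 \ell \deg P / 4}$, which is geometrically summable in $\ell$.

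Next, I would take logarithms and apply $\log(1+x) \leq x$ for $x \geq 0$ to obtain
\[
\log \prod_{P \in \mathcal{P}_{\leq N}}\Big(1+\sum_{\ell \geq 1}\frac{g(P^\ell)}{q^{\ell \deg P}}\Big) \;\leq\; \sum_{P \in \mathcal{P}_{\leq N}} \frac{g(P)}{q^{\deg P}} \;+\; \sum_{P \in \mathcal{P}} \sum_{\ell \geq 2} \frac{g(P^\ell)}{q^{\ell \deg P}}.
\]
The prime-power double sum on the right is $O(1)$ by \cref{lem:shiu-prime-powers} applied with $\delta = 1$ and $k = 1/4$, noting that the endpoint condition $k = 1-3/(4\delta)$ holds with equality. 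Exponentiating both sides then yields exactly the stated bound.

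The main (and in fact essentially only) obstacle is a bookkeeping one: verifying that \cref{lem:shiu-prime-powers} is indeed applicable at the boundary parameters $(k,\delta) = (1/4,1)$ under the hypothesis $g \in \mathscr{M}_{1/4}$. Since that lemma is proved to cover the closed range $0 < k \leq 1 - 3/(4\delta)$, the endpoint is admissible, and the remainder of the argument is a standard Rankin-style Euler product manipulation with no further subtleties.
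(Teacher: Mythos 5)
Your proposal is correct and follows essentially the same route as the paper: dominate the sum by the Euler product over $\mathcal{P}_{\leq N}$, bound each factor using $\log(1+x)\leq x$ (the paper writes this as $e^x\geq 1+x$), and absorb the prime-power tail as $O(1)$ via \cref{lem:shiu-prime-powers} with $\delta=1$. Your check that the endpoint $k=1/4$ is admissible in that lemma is the right bookkeeping and matches the paper's application.
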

\begin{proof}
    Since $g\in \mathscr{M}_{1/4}$, we apply \cref{lem:shiu-prime-powers} with $\delta=1$ and the inequality $e^x\geq x+1$ for all $x\in\mathbb{R}$ to obtain
    \begin{align*}
        \sum_{F \in \mathcal{M}_{\leq N}}\frac{g(F)}{q^{\deg F}}
        &\leq\prod_{\substack{P\in\mathcal{P}_{\leq N}}}\Big(1+\frac{g(P)}{q^{\deg P}}+\sum_{\ell=2}^\infty\frac{g(P^\ell)}{q^{\ell\deg P}}\Big)\\
        & \leq \exp\Big(\sum_{\substack{P\in\mathcal{P}_{\leq N}}}\Big(\frac{g(P)}{q^{\deg P}}+\sum_{\ell=2}^\infty\frac{g(P^\ell)}{q^{\ell\deg P}}\Big)\Big) =\exp\Big(\sum_{\substack{P\in\mathcal{P}_{\leq N}}}\frac{g(P)}{q^{\deg P}}+O(1)\Big)
    \end{align*}
    as required. 
\end{proof}

Lastly, we prepare an estimate for partial sums over smooth polynomials.
\begin{lemma}\label[lemma]{lem:shiu-prime-sum-bound}
    Let $g \in \mathscr{M}_{3/16}$. Uniformly for $1 \leq r \leq z / \log_q(z)$, we have that  
    \begin{align*}
        \sum_{\substack{z/2<\deg(F)\leq z\\ P^+(F)\leq z/r}}\frac{g(F)}{q^{\deg(F)}}\ll r^{-\frac18r}\exp\bigg(\sum_{\substack{P\in\mathcal{P}_{\leq z/r}}}\frac{g(P)}{q^{\deg(P)}}+ O(A_1r^{\frac14})\bigg). 
    \end{align*}
\end{lemma}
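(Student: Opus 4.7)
The plan is to apply Rankin's trick with parameter $\alpha = r^{1/4}$, exploiting the structural constraint that every $F$ in the summation range has many prime factors. Specifically, if $\deg F > z/2$ and $P^+(F)\leq z/r$, then
\[
z/2<\deg F = \sum_P v_P(F)\deg P \leq \Omega(F)\cdot (z/r),
\]
so $\Omega(F) > r/2$.

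Using $\alpha^{\Omega(F)-r/2}\geq 1$ for $\alpha>1$ and non-negativity of $g$, I would pass to the Euler product
\[
\sum_{\substack{z/2<\deg F\leq z\\P^+(F)\leq z/r}}\frac{g(F)}{q^{\deg F}}\leq \alpha^{-r/2}\prod_{P\in\mathcal{P}_{\leq z/r}}\Big(1+\sum_{\ell\geq 1}\frac{g(P^\ell)\alpha^\ell}{q^{\ell\deg P}}\Big),
\]
and then take $\alpha = r^{1/4}$ so that the prefactor $\alpha^{-r/2} = r^{-r/8}$ supplies the announced saving.

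Bounding the Euler product is then the main calculation. Taking logarithms and using $\log(1+x)\leq x$ separates the contribution into an $\ell=1$ piece and an $\ell\geq 2$ piece. For the $\ell\geq 2$ piece I would split primes by degree at a threshold $d_0\asymp \log_q(A_1\alpha)$: for $\deg P>d_0$, condition (1) of $\mathscr{M}_{3/16}$ (namely $g(P^\ell)\leq A_1^\ell$) gives a convergent geometric series whose sum is $O(1)$ in the style of \cref{lem:shiu-prime-powers}; for $\deg P\leq d_0$, where $A_1\alpha$ may overwhelm $q^{\deg P}$, I would instead use condition (2), $g(F)\leq A_2\, 2^{(3/16)\deg F}$, to bound each Euler factor uniformly. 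Since there are only $O(r^{1/4})$ small-degree primes, their total contribution to the exponent is $O(A_1 r^{1/4})$.

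The hardest part will be the $\ell=1$ contribution, which naively yields $r^{1/4}\sum_P g(P)/q^{\deg P}$, whereas the statement demands $\sum_P g(P)/q^{\deg P}+O(A_1 r^{1/4})$. Mirroring Shiu's original strategy, I would replace the uniform Rankin weight $\alpha^{\Omega(F)}$ by a restricted weight $\alpha^{\Omega^{*}(F)}$, where $\Omega^{*}$ counts prime factors lying in a carefully selected bounded-degree dyadic block chosen so that the associated partial sum $\sum_{P\in\text{block}}g(P)/q^{\deg P}$ is $O(A_1)$, while a pigeonhole argument across dyadic blocks ensures that most $F$ still satisfy $\Omega^{*}(F)\gtrsim r/(2\log(z/r))$. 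Optimizing the Rankin parameter block-by-block against the pigeonhole loss and recombining with the $\ell\geq 2$ estimate above should yield the claimed bound. Balancing the dyadic logarithmic loss against the target exponent $r/8$ is the central technical obstacle.
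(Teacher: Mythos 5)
Your opening move --- Rankin's trick in the variable $\Omega(F)$ with weight $\alpha^{\Omega(F)-r/2}$, $\alpha=r^{1/4}$ --- does produce the prefactor $r^{-r/8}$, but it damages the main term beyond repair: the $\ell=1$ part of each Euler factor becomes $\alpha\,g(P)/q^{\deg P}$, so you land on $\exp\bigl(r^{1/4}\sum_{P}g(P)/q^{\deg P}\bigr)$ rather than $\exp\bigl(\sum_{P}g(P)/q^{\deg P}+O(A_1r^{1/4})\bigr)$; since $\sum_{P\in\mathcal P_{\leq z/r}}g(P)/q^{\deg P}$ can be of size $A_1\log(z/r)$, these differ by an unacceptable factor. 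You correctly flag this, but the repair you sketch does not close the gap. With $J\asymp\log_2(z/r)$ dyadic degree blocks, the pigeonhole only guarantees $\Omega^{*}(F)\gtrsim r/(2J)$ in the selected block, so the saving degrades to $\alpha^{-r/(2J)}=r^{-r/(8J)}$; to recover $r^{-r/8}$ you would need $\alpha=r^{J/4}$, at which point the selected block's $\ell=1$ contribution is $\exp(O(A_1r^{J/4}))$, far exceeding the allowed $\exp(O(A_1r^{1/4}))$ once $J>1$. So the ``central technical obstacle'' you name is a genuine obstruction to this route, not a detail to be optimized away.

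The mechanism that actually works (and is what the paper does) is Rankin's trick in $\deg F$ rather than in $\Omega(F)$: since $\deg F>z/2$, one has $q^{-\deg F}\leq q^{z(\theta-1)/2}q^{-\theta\deg F}$ for $\theta<1$, and after passing to the Euler product and absorbing $\ell\geq 2$ via \cref{lem:shiu-prime-powers}, the $\ell=1$ perturbation is
\[
\sum_{P\in\mathcal P_{\leq z/r}}\frac{g(P)}{q^{\deg P}}\bigl(q^{(1-\theta)\deg P}-1\bigr)\ll A_1\bigl(e^{(1-\theta)(\log q)z/r}-1\bigr),
\]
using $g(P)\leq A_1$ and $\sum_{\deg P\leq z/r}\deg P/q^{\deg P}\ll z/r$. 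The key structural difference from your weight is that $q^{(1-\theta)\deg P}$ is essentially $1$ on the small-degree primes that dominate $\sum_P g(P)/q^{\deg P}$ and only grows to $r^{1/4}$ at the top of the range, whereas $\alpha^{\Omega(F)}$ taxes every prime equally by $r^{1/4}$. Choosing $\theta=1-\tfrac{r\log_q r}{4z}$ gives $q^{z(\theta-1)/2}=r^{-r/8}$ and $q^{z(1-\theta)/r}=r^{1/4}$ simultaneously. (Your block-by-block product weight, if pushed to its logical conclusion with $\log\alpha_j\propto 2^j$, reconstructs exactly this degree-weighted Rankin factor --- but as written your proposal stops short of that and the single-block pigeonhole version fails.)
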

\begin{proof}
    Let  $3/4 < \theta < 1$ be a parameter. By Rankin's trick and \cref{lem:shiu-prime-powers} with $\delta = 1$  (via the first condition in \cref{def:shiu}), we deduce that 
    \begin{align*}
        \sum_{\substack{z/2<\deg(F)\leq z\\ P^+(F)\leq z/r}}\frac{g(F)}{q^{\deg(F)}}
        & \leq q^{z(\theta-1)/2}\sum_{\substack{F\in\mathcal{M}\\ P^+(F)\leq z/r}}\frac{g(F)}{q^{\deg(F)\theta}}\\
        &=q^{z(\theta-1)/2}\prod_{\substack{P\in\mathcal{P}_{\leq z/r}}}\Big(1+\frac{g(P)}{q^{\deg(P)\theta}}+\sum_{\ell=2}^\infty\frac{g(P^\ell)}{q^{\ell\deg(P)\theta}}\Big)\\
         & \leq q^{z(\theta-1)/2}\exp\Big(\sum_{\substack{P\in\mathcal{P}_{\leq z/r}}}\frac{g(P)}{q^{\deg(P)\theta}}+O(1)\Big) \\
         & =  q^{z(\theta-1)/2}\exp\Big(\sum_{\substack{P\in\mathcal{P}_{\leq z/r}}}\frac{g(P)}{q^{\deg(P)}}+ \sum_{\substack{P\in\mathcal{P}_{\leq z/r}}}\frac{g(P)}{q^{\deg(P)}}( q^{(\deg P)(1-\theta)}-1) +O(1)\Big), 
    \end{align*}
	where the last step follows from the identity $x^{-\theta} = x^{-1} + x^{-1} (x^{1-\theta}-1)$ for $x > 0$. For the second sum, the first condition in  \cref{def:shiu}  implies that  
    \begin{align*}
        \sum_{P\in\mathcal{P}_{\leq z/r}}\frac{g(P)}{q^{\deg(P)}}(q^{\deg(P)(1-\theta)}-1)&\leq\sum_{P\in\mathcal{P}_{\leq z/r}}\frac{A_1}{q^{\deg(P)}}\sum_{n=1}^\infty\frac{((1-\theta)\log(q)\deg(P))^n}{n!}\\
        &\leq A_1\sum_{n=1}^\infty\frac{(1-\theta)^n\log(q)^n(z/r)^{n-1}}{n!}\sum_{P\in\mathcal{P}_{\leq z/r}}\frac{\deg(P)}{q^{\deg(P)}}\\
        &\ll  A_1\sum_{n=1}^\infty\frac{((1-\theta)\log(q) (z/r))^{n}}{n!} = O( A_1q^{z(1-\theta)/r}).
    \end{align*}
    The last step applies the prime number theorem. 
    Thus, we obtain
	\[
        \sum_{\substack{z/2<\deg(F)\leq z\\ P^+(F)\leq z/r}}\frac{g(F)}{q^{\deg(F)}}\ll q^{z(\theta-1)/2}\exp\Big(\sum_{\substack{P\in\mathcal{P}_{\leq z/r}}}\frac{g(P)}{q^{\deg(P)}}+ O(A_1q^{z(1-\theta)/r} ) \Big).
	\]
    Setting  $\theta=1-\frac{r\log_q(r)}{4z}$, we see that
    \begin{align*}
        1\leq r\leq z/\log_q(z)\implies r\log_q(r)<z\implies 3/4<\theta < 1.
    \end{align*}
   This choice also implies that 
 	\[
        z(1-\theta)/2 = r\log_q(r)/8
	\]
    and therefore 
	\[
        q^{z(1-\theta)/r}=r^{1/4}.
	\]
    Substituting these observations into the original estimate yields 
    \begin{align*}
	    \sum_{\substack{z/2<\deg(F)\leq z\\ P^+(F)\leq z/r}}\frac{g(F)}{q^{\deg(F)}}&\ll r^{-\frac18r}\exp\Big(\sum_{\substack{P\in\mathcal{P}_{\leq z/r}}}\frac{g(P)}{q^{\deg(P)}}+ O(A_1r^{\frac14}) \Big),
    \end{align*}
    as desired.
\end{proof}

This completes our preliminary lemmas. 

\subsection{Proof of \cref{thm:shiu}} We may finally proceed to the proof of a function field analogue of Shiu's theorem. In fact, we will establish a slightly stronger form.

\begin{theorem} \label{thm:shiu-general}
Let $A\in\mathcal{M}_N$, $0<\beta<\frac12$, and $g\in \mathscr{M}_{\beta/24}$. Let $N_{\beta}$ be a sufficiently large constant depending only on $\beta$. For $N \geq N_{\beta}$ and $\beta N<h\leq N-1$, we have 
    \begin{align*}
        \sum_{\substack{F\in \mathcal{I}(A,h)}}g(F)\ll_\beta\frac{q^{h+1}}{N}\exp\Big(\sum_{\substack{P\in\mathcal{P}_{\leq N}}}\frac{g(P)}{q^{\deg P}}\Big). 
    \end{align*}
\end{theorem}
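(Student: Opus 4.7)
The plan is to adapt Shiu's proof \cite{Shiu-1980} of his Brun--Titchmarsh estimate to the function-field setting, replacing the classical Brun--Titchmarsh and Rankin inputs by the short-interval tools developed above. Pick a smoothness threshold $z = z(\beta) > 0$ (a sufficiently small absolute constant depending only on $\beta$) and split
\[
\sum_{F \in \mathcal{I}(A,h)} g(F) \;=\; \Sigma_{\mathrm{sm}} + \Sigma_{\mathrm{lg}}
\]
according to whether $P^+(F) \leq z$ or $P^+(F) > z$.

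For the smooth sum $\Sigma_{\mathrm{sm}}$, bound $g(F) \leq A_2 \cdot 2^{(\beta/24)N}$ using condition (2) of \cref{def:shiu}, and estimate the count of $z$-smooth polynomials in the short interval $\mathcal{I}(A,h)$ via the explicit Hildebrand inequality \cref{prop:intervalshift}, reducing to $\Psi(h+1,z)$; an application of \cref{lem:shiu-smooth-rankin} then produces a Rankin-type decay factor $\exp(-\tfrac{2(h+1)}{3z})$. Since $h > \beta N$, choosing $z$ strictly below an explicit absolute constant (roughly $16/\log 2$) makes $\tfrac{2(h+1)}{3z}$ dominate $(\beta/24)N\log 2$ by at least $\log N$, yielding $\Sigma_{\mathrm{sm}} \ll_\beta q^{h+1}/N$.

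For the large-prime sum $\Sigma_{\mathrm{lg}}$, write each contributing $F$ as $F = P^a G$ with $P = P^+(F)$ of degree $k > z$, $(G,P) = 1$, and $a \geq 1$; the prime-power tail $a \geq 2$ is negligible by \cref{lem:shiu-prime-powers}. For $a = 1$, the set $\{G : PG \in \mathcal{I}(A,h)\}$ equals the interval quotient $\mathcal{I}(A,h)/P = \mathcal{I}(\widetilde A, h-k)$ discussed in \S\ref{sec:intervals}, which is itself an interval of radius $h-k$ and degree $N-k$ whenever $k \leq h+1$ (and a singleton otherwise). The inner sum over $G$ (coprime to $P$, with $P^+(G) < k$) is then majorized by a Rankin-style Euler product via \cref{lem:shiu-prime-sum-bound} and \cref{lem:shiu-rankin-squarefree}; summing over primes $P \in \mathcal{P}_k$ for $k \in (z, N]$ using $\pi_q(k) \leq q^k/k$ and performing partial summation in $k$ reduces $\Sigma_{\mathrm{lg}}$ to the telescoping bound $\frac{q^{h+1}}{N} \exp\bigl(\sum_{P \in \mathcal{P}_{\leq N}} g(P)/|P|\bigr)$.

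The main obstacle is the delicate balance between the two pieces: making $\Sigma_{\mathrm{sm}}$ smaller than $q^{h+1}/N$ forces $z$ to be a bounded constant (because $g$ can grow like $2^{(\beta/24)N}$ on smooth $F$), while extracting the saving $N^{-1}$ in $\Sigma_{\mathrm{lg}}$ requires the partial summation over $\deg P \in (z, N]$ to absorb the density of primes near degree $N$ and in particular to avoid losing an extra $\log N$ from the coprime-to-$P$ restriction on $G$. The hypothesis $g \in \mathscr{M}_{\beta/24}$ with $\beta < 1/2$ is precisely what opens a nonempty window for $z$ in which the Rankin decay beats the growth of $g$; carefully propagating explicit constants through this balance while preserving uniformity in $q$ and $N$ is the heart of the argument and mirrors Shiu's original proof.
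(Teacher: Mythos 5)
Your treatment of the smooth piece $\Sigma_{\mathrm{sm}}$ is fine: with $z$ a constant below $16/\log 2$, the Rankin decay $\exp(-\tfrac{2(h+1)}{3z})$ from \cref{lem:shiu-smooth-rankin} combined with \cref{prop:intervalshift} does beat $2^{(\beta/24)N}$ once $h>\beta N$. But the large-prime piece, which carries essentially all of the sum, has a genuine gap. After writing $F=PG$ with $P=P^+(F)$ of degree $k$, the inner sum is
\[
\sum_{\substack{G\in \mathcal{I}(A,h)/P\\ P^+(G)\leq k}} g(G),
\]
i.e.\ a sum of the multiplicative function $g$ over a short interval of radius $h-k$ in degree $N-k$ --- exactly the quantity the theorem is trying to estimate. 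The lemmas you invoke cannot close this: \cref{lem:shiu-rankin-squarefree} and \cref{lem:shiu-prime-sum-bound} bound \emph{complete} sums $\sum_{F\in\mathcal{M}_{\leq m}} g(F)q^{-\deg F}$ (or over a degree range), with no interval restriction. If you drop the interval condition on $G$ you lose the factor $q^{h+1-N}$ and end up with a bound of size $q^N$ rather than $q^{h+1}$; if you keep it, you need equidistribution of $g$ in short intervals, which is circular. Moreover, even for the unweighted count the $1/N$ saving cannot come from $\pi_q(k)\leq q^k/k$ alone: $\sum_{z<k\leq h+1} k^{-1}\,q^{h+1}$ already produces an extra $\log N$, and suppressing it requires the smoothness restriction $P^+(G)\leq k$ inside the short interval, again a short-interval smooth-\emph{weighted} estimate you do not have.

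The paper's proof decomposes in the opposite direction, following Shiu: it factors $F=B_FD_F$ where $B_F$ is the product of the \emph{smallest} prime powers up to degree $z=h/3$ (not a constant), and $D_F$ is the complementary rough part. Then the sum over $B$ is a complete sum over $\mathcal{M}_{\leq z}$, handled by the Rankin-type \cref{lem:shiu-rankin-squarefree,lem:shiu-prime-sum-bound}, while the interval enters only through counting the rough cofactors $D$ in the interval quotient $\mathcal{I}(A,h)/B$; since $D$ is rough, $\Omega(D)=O_\beta(1)$ so $g(D)=O_\beta(1)$, and the short-interval Chebyshev bound (\cref{lem:chebyshev}) supplies the density $\asymp 1/z\asymp_\beta 1/N$ that produces the crucial $N^{-1}$ saving. (Classes II--IV in the paper dispose of the polynomials for which this clean factorization fails, using \cref{lem:shiu-smooth-rankin}, \cref{thm:gorodetsky}, and a dyadic decomposition in $P^-(D_F)$.) You would need to restructure your argument around this small-part/rough-part splitting rather than peeling off the largest prime.
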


\begin{proof}[Proof of \cref{thm:shiu} assuming \cref{thm:shiu-general}] The assumptions on $g$ in \cref{thm:shiu} are equivalent to assuming that $g \in \bigcap_{\delta > 0} \mathscr{M}_{\delta}$ according to \cref{def:shiu}. In particular, $g \in \mathscr{M}_{\beta/24}$ so the result follows from \cref{thm:shiu-general}. 
\end{proof}

It suffices to prove \cref{thm:shiu-general}. The proof strategy is largely the same as Shiu's argument \cite[Theorem 1]{Shiu-1980}, relying on some of our new lemmas (e.g. \cref{lem:chebyshev,lem:shiu-smooth-rankin}) and a careful analysis to preserve uniformity in $q$. 
\begin{proof}[Proof of \cref{thm:shiu-general}]
Let $\beta N < h \leq N-1$. Put $z=h/3$.
We can write every $F\in \mathcal{I}(A,h)$ as
    \begin{align*}
        F= \underbrace{P_1^{s_1}\cdots P_j^{s_j}}_{B_F}\underbrace{P_{j+1}^{s_{j+1}}\cdots P_{\ell}^{s_\ell}}_{D_F},\quad \deg P_1 \leq\deg P_2\leq\cdots\leq\deg P_\ell\text{ and } i\neq j\implies P_i\neq P_j,
    \end{align*}
    where $B_F$ is chosen such that $\deg B_F\leq z<\deg(B_FP_{j+1}^{s_{j+1}})$, fixing an arbitrary ordering on the primes respecting degree. Let $P^-(F)$ denote the degree of the smallest prime factor of $F$. Set 
    \[
    \Delta := \frac{\log(z\log z)}{\log q} > 0
    \]
    so $\Delta \leq z/2$.   We can divide these $F$ into four possible classes:
    \begin{align*}
        \textbf{I}&:\quad P^-(D_F)>z/2;\\
        \textbf{II}&:\quad P^-(D_F)\leq z/2,\,\,\deg B_F\leq z/2;\\
        \textbf{III}&:\quad P^-(D_F)\leq \Delta,\,\,\deg B_F >z/2;\\
        \textbf{IV}&:\quad \Delta <P^-(D_F)\leq z/2, \,\,\deg B_F >z/2.
    \end{align*}

    \medskip 
    
\noindent\textbf{Class I.} ($P^-(D_F)>z/2$) \\
 	By non-negativity of $g$ and our definition of interval quotients (see \S\ref{sec:intervals}) , we have that 
    \begin{align*} 
    \sum_{F\in\textbf{I}}g(F)=\sum_{F\in\textbf{I}}g(B_F)g(D_F)\leq\sum_{\deg B \leq z}g(B)\sum_{\substack{F\in \mathcal{I}(A,h)\\F\equiv 0\pmod{B}\\ P^-(F/B)>z/2}}g(F/B)=\sum_{\substack{\deg B\leq z}}g(B)\sum_{\substack{D\in \mathcal{I}(A,h)/B\\ P^-(D)>z/2}}g(D). 
    \end{align*}
    By assumption, $P^-(D)>z/2=h/6>N\beta/6$ implying $\Omega(D)\leq\frac{N}{P^-(D)}<\frac{6}{\beta}$, and so by condition $(1)$ in \cref{def:shiu} , we have $g(D)\leq A_1^{\Omega(D)}\leq A_1^{6/\beta}$. Thus, using definition \eqref{def:Phi-Psi}, we see that 
    \begin{align*}
        \sum_{F\in \textbf{I}}g(F)\ll_\beta\sum_{\substack{\deg B\leq z}}g(B)\cdot \Phi(\mathcal{I}(A,h)/B,z/2).
    \end{align*}
    Since $2(z/2)<h-z\leq h-\deg B$,  \cref{lem:chebyshev} implies that 
    \[
    \Phi(\mathcal{I}(A,h)/B,z/2)\leq\frac{2q^{h-\deg B+1}}{z(1-1/q)}
    \]
    from which we obtain that 
    \begin{align}
        \sum_{F\in\textbf{I}}g(F)&\ll_\beta \frac{q^{h+1}}{z(1-1/q)}\sum_{\substack{\deg B\leq z}}\frac{g(B)}{q^{\deg B}} \nonumber\\
        &\ll_\beta\frac{q^{h+1}}{z(1-1/q)}\exp\Big(\sum_{\substack{P\in\mathcal{P}_{\leq z}}}\frac{g(P)}{q^{\deg P}}\Big), \label{eq:i}
    \end{align}
    using \cref{lem:shiu-rankin-squarefree}.

\medskip

\noindent\textbf{Class II.} ($P^-(D_F)\leq z/2,\,\,\deg B_F\leq z/2$) \\ 
        Now suppose that $F\in\textbf{II}$, so that there exists a (uniquely chosen) prime $P$ and an integer $s$ such that $P^s\mid F$ with $\deg P\leq z/2$ and $\deg(P^s)>z/2$. If $\ell = \ell(P,z)$ is the least positive integer satisfying $\deg (P^\ell)>z/2$, we see that $-\deg(P^{\ell})\leq\min(-z/2,-\deg(P^{2}))$, and so  by the prime number theorem, we have that 
        \[
        \sum_{\deg P\leq z/2}q^{-\deg P^{\ell}}\leq\sum_{\deg P\leq z/4}q^{-z/2}+\sum_{{\deg(P)> z/4}}q^{-\deg(P^{2})}\ll q^{-z/4}. 
        \]
        Since $\deg(P^{\ell})\leq\deg P+\deg(P^{\ell-1})\leq z\leq h$ and $\ell = \ell(P,z)$, it follows that 
    \begin{align*}
        \sum_{F\in\textbf{II}}1\leq\sum_{\deg P\leq z/2}\sum_{\substack{F\in \mathcal{I}(A,h)\\ F\equiv 0\,\mathrm{mod}\,{P^{\ell}}}}1=\sum_{\substack{\deg{P}\leq z/2}}q^{h-\deg(P^{\ell})+1}\ll q^{h+1-z/4}. 
    \end{align*}
    Now, as $h>\beta N$ and $g \in \mathscr{M}_{\beta/24}$,  condition $(2)$ in \cref{def:shiu}  implies that
    \begin{align*}
           g(F)\ll 2^{(\deg F)\beta/24}\leq 2^{N\beta/24}<2^{h/24}=2^{z/8},
    \end{align*}
    and therefore
    \begin{align} \label{eq:ii}
    \sum_{F\in\textbf{II}}g(F)\ll q^{h+1} 2^{-z/8}.
    \end{align}

\medskip

\noindent\textbf{Class III.} ($P^-(D_F)\leq \Delta ,\,\,\deg B_F >z/2$)\\ 

In this case, there exists $B \in \mathcal{M}$ with $B\mid F$, $z/2<\deg B\leq z$, and $P^+(B)\leq \Delta$. Thus, by \cref{lem:interval-divisors}, it follows that 
    \[  \sum_{F\in\textbf{III}}1 
    \leq \sum_{z/2 < n \leq z} \sum_{\substack{B \in \mathcal{M}_n \\ P^+(B)\leq \Delta}}\Big( \sum_{\substack{F\in \mathcal{I}(A,h)\\ F\equiv 0\pmod{B}}}1 \Big) =\sum_{z/2 < n \leq z} \sum_{\substack{B \in \mathcal{M}_n \\ P^+(B)\leq \Delta}}q^{h-n+1}
    \]
   As $ \log_q(n \log n) \leq \Delta \leq \log_2(z\log z)  \leq z/2 \leq n$ for $z/2 < n \leq z$, it follows by \cref{thm:gorodetsky} that the above is  
    \begin{align*}
        &\leq  \sum_{z/2 < n \leq z} q^{h-n+1}  \Psi(n,\log_2(z\log z))\\
        &= q^{h+1} \sum_{z/2 < n \leq z} \rho\Big(\frac{n \log 2}{\log(z\log z)}\Big) \exp\Big( O\Big( \frac{n \log(\frac{n}{\log(z\log z)}+1)}{\log(z\log z)^2} \Big)\Big)
    \end{align*}
 Observe that $\rho(u) \ll u^{-3u/4}$ by classical estimates, and $z \asymp_{\beta} N$ with $N \geq N_{\beta}$ sufficiently large.  Therefore, the above is at most 
   \[
   \ll_{\beta}  q^{h+1} 2^{-z/4}.
   \] 
	Similar to the case $g\in \textbf{II}$, we also have that  
   $
        g(F)\ll 2^{z/8},
   $  
 so we may conclude that 
\begin{equation} \label{eq:iii}
    \sum_{F\in\textbf{III}}g(F)\ll_\beta q^{h+1}2^{-z/8}. 
\end{equation}
\medskip
\noindent\textbf{Class IV.} ($\Delta <P^-(D_F)\leq z/2, \,\,\deg B_F >z/2$)\\
Finally, we have by non-negativity of $g$ that 
\begin{align*}
    \sum_{F\in\textbf{IV}}g(F)=\sum_{F\in \textbf{IV}}g(B_F)g(D_F)\leq\sum_{z/2<\deg B\leq z}g(B)\sum_{\substack{F\in \mathcal{I}(A,h)\\ B_F=B,P^-(D_F)>P^+(B)\\\Delta <P^-(D_F)\leq z/2}}g(D_F).
\end{align*}
Set $r_0=\lfloor z/\Delta \rfloor$, so that $\Delta >z/(r_0+1)$. Let $2\leq r\leq r_0$, and if $F$ in the righthand sum satisfies $z/(r+1)<P^-(D_F)\leq z/r$, then $P^+(B_F)=P^+(B)<P^-(D_F)<z/r$ and, moreover,
\begin{align*}
    \Omega(D_F)\leq\frac{N}{P^-(D_F)}\leq\frac{(r+1)N}{z}<\frac{3(r+1)}{\beta}<\frac{6r}{\beta},
\end{align*}
from which it follows by \cref{def:shiu} that $g(D_F)\leq A_1^{\Omega(D_F)}\leq A_1^{\frac{6r}{\beta}}$. Overall, using $\Phi$ defined by \eqref{def:Phi-Psi} and the interval quotient from \S\ref{sec:intervals}, these observations imply that 
\begin{align*}
    \sum_{F\in \textbf{IV}}g(F)&\leq\sum_{1\leq r\leq r_0}A_1^{6r/\beta}\sum_{\substack{z/2\leq\deg B\leq z\\ P^+(B)<z/r}}g(B)\sum_{\substack{F\in \mathcal{I}(A,h)/B\\ z/(r+1)<P^-(F)<z/r}}1\\
    &\leq\sum_{2\leq r\leq r_0}A_1^{6r/\beta}\sum_{\substack{z/2<\deg B\leq z\\P^+(B)<z/r}}g(B)\cdot \Phi(\mathcal{I}(A,h)/B,z/(r+1)).
\end{align*}
Since $2(z/(r+1))<h-z<h-\deg B$, \cref{lem:chebyshev} provides the bound $$\Phi(\mathcal{I}(A,h)/B,z/(r+1))\leq\frac{(r+1)q^{h-\deg B+1}}{z(1-1/q)},$$ so that
\begin{align*}
    \sum_{F\in \textbf{IV}}g(F)\leq\frac{q^{h+1}}{z(1-1/q)}\sum_{2\leq r\leq r_0}(r+1)A_1^{6r/\beta}\sum_{\substack{z/2\leq\deg B\leq z\\ P^+(B)<z/r}}g(B)q^{-\deg B}.
\end{align*}
As $r_0 \leq z /\Delta = z  / \log_q(z \log z)$ and  $z \asymp_{\beta} N$ with $N \geq N_{\beta}$ sufficiently large, we may assume that $r_0\leq z  /\log_q(z)$. Hence, we may apply \cref{lem:shiu-prime-sum-bound} to obtain
\begin{equation}  \label{eq:iv}
\begin{aligned}
    \sum_{F\in \textbf{IV}}g(F)&\ll \frac{q^{h+1}}{z(1-1/q)}\sum_{2\leq r\leq r_0}(r+1)A_1^{6r/\beta}r^{-\frac{1}{8}r} \exp\Big(\sum_{\substack{P\in\mathcal{P}_{\leq z/r}}}\frac{g(P)}{q^{\deg P}} + O(A_1 r^{1/4} ) \Big)\\
    &\ll\frac{q^{h+1}}{z(1-1/q)}\exp\Big(\sum_{\substack{P\in\mathcal{P}_{\leq z}}}\frac{g(P)}{q^{\deg P}}\Big).
\end{aligned}
\end{equation}
Combining \eqref{eq:i}, \eqref{eq:ii}, \eqref{eq:iii}, and \eqref{eq:iv} completes the proof of \cref{thm:shiu-general}, since extending the sum over $P \in \mathcal{P}_{\leq z}$ to $P \in \mathcal{P}_{\leq N}$ only enlarges the bound. 
\end{proof}

\section{Short intervals: proof of \cref{thm:CLT-intervals}}
\label{sec:interval-CLT}

This section is dedicated to the proof of \cref{thm:CLT-intervals}. First, we establish a  key estimate for  the multiplicative energy of a function field interval in $\mathbb{F}_q[t]$.

\begin{proposition}\label[proposition]{prop:short-interval-mult-energy}
Let $K \in \mathcal{M}_N$ and let  $1\leq h \leq N-1$ be an integer.  The quantity
\[
|\{(F_1,F_2,G_1,G_2)\in \mathcal{I}(K,h)^4 : F_1F_2=G_1G_2,\; F_1\neq G_1,G_2\}|,
\]
is $0$ when $h< N/2$, and when $h\geq N/2$ it is at most
\[
 2\big(h- \lfloor N/2\rfloor + 1\big)\, q^{\,3h-N+3}.
\]
\end{proposition}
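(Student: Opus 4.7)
The plan is to begin by shifting variables. Write $F_i = K + R_i$ and $G_j = K + S_j$ with $\deg R_i, \deg S_j \leq h$, expand the relation $F_1 F_2 = G_1 G_2$, and cancel the $K^2$ term to obtain the key identity
\[
K \cdot (R_1 + R_2 - S_1 - S_2) = S_1 S_2 - R_1 R_2.
\]
Set $U := R_1 + R_2 - S_1 - S_2$. If $U = 0$, then also $S_1 S_2 = R_1 R_2$, so $R_1, R_2$ and $S_1, S_2$ satisfy the same monic quadratic $x^2 - (R_1+R_2) x + R_1 R_2 \in \mathbb{F}_q[t][x]$, which has at most two roots in the UFD $\mathbb{F}_q[t]$; hence $\{R_1, R_2\} = \{S_1, S_2\}$ as multisets, contradicting $F_1 \neq G_1, G_2$. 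Therefore $U \neq 0$, and comparing degrees in $K U = S_1 S_2 - R_1 R_2$ gives $N + \deg U \leq 2h$. This already settles the case $h < N/2$, where the count is $0$.

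For $h \geq N/2$, the plan is to parametrize off-diagonal quadruples via greatest common divisors. Let $D = \gcd(F_1, G_1)$, $d = \deg D$, and write $F_1 = DA$, $G_1 = DB$ with $\gcd(A, B) = 1$ and $A, B$ monic of degree $N-d$. Since $\gcd(A, B) = 1$, the relation $AF_2 = BG_2$ forces $F_2 = BH$ and $G_2 = AH$ for a unique monic $H \in \mathcal{M}_d$, so every off-diagonal quadruple determines a unique tuple $(D, A, B, H)$. The interval conditions yield two crucial degree constraints: from $F_1 - G_1 = D(A - B) \in \mathcal{I}(K,h) - \mathcal{I}(K,h)$, one gets $\deg(A - B) \leq h - d$; and from $F_1 - G_2 = A(D - H)$, one gets $\deg(D - H) \leq h - (N-d)$. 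The off-diagonal hypothesis $F_1 \neq G_2$ means $D \neq H$, i.e.\ $\deg(D - H) \geq 0$, so we must have $d \geq N - h$, narrowing the range of $d$.

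For each fixed $d$ with $N - h \leq d \leq h+1$, I would apply \cref{lem:interval-divisors} repeatedly: the number of $D \in \mathcal{M}_d$ is $q^d$; given $D$, the numbers of $A, B \in \mathcal{M}_{N-d}$ with $DA, DB \in \mathcal{I}(K,h)$ are each $q^{h-d+1}$; given $A, B$, the number of $H \in \mathcal{M}_d$ with $AH \in \mathcal{I}(K,h)$ is at most $q^{h - (N-d) + 1} = q^{h - N + d + 1}$. A key observation is that the second interval condition $BH \in \mathcal{I}(K,h)$ is \emph{automatic} once $AH \in \mathcal{I}(K,h)$ and $\deg(A - B) \leq h - d$ hold, because
\[
BH - K = (B - A) H + (AH - K)
\]
has degree at most $\max(\deg(B-A) + d,\, h) \leq h$. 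Multiplying the four counts gives $q^{3h - N + 3}$ per value of $d$, independent of $d$. Summing over the $2h - N + 2$ admissible values yields the claimed bound $2(h - \lfloor N/2 \rfloor + 1)\, q^{3h - N + 3}$ (with room to spare for $N$ odd).

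The main obstacle will be verifying the boundary behaviour of the parametrization. In particular, one must exclude $d = N - h - 1$: for that value, \cref{lem:interval-divisors} and the constraint $\deg(A - B) \leq h - d = 2h - N + 1$ make the equation $\deg(D - H) \leq -1$ force $H = D$, so every candidate violates $F_1 \neq G_2$. A secondary technicality is that the tuple $(D, A, B, H)$ is uniquely determined by $(F_1, F_2, G_1, G_2)$ precisely when $\gcd(A,B) = 1$; relaxing this assumption only enlarges the count, so the upper bound is preserved. Beyond these two points, the argument is a controlled accounting of the Lemma 3.1 bounds.
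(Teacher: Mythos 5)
Your proposal is correct and follows essentially the same route as the paper: the GCD parametrization $F_1=DA$, $G_1=DB$, $F_2=BH$, $G_2=AH$, the degree constraints $N-h\leq d\leq h$ extracted from $F_1-G_1=D(A-B)$ and $F_1-G_2=A(D-H)$, and the fourfold application of \cref{lem:interval-divisors} giving $q^{3h-N+3}$ per admissible $d$. The only cosmetic differences are that you dispatch the case $h<N/2$ by a separate expansion argument (the paper gets it for free from the emptiness of the range for $d$) and you sum directly over all $d$ rather than using the paper's symmetry-plus-factor-of-two step; both yield a count within the stated bound.
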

\begin{remark} 
   When $h = N-1$ and hence ${\mathcal{I}(K,h) = \mathcal{M}_N}$, our bound gives that the set has at most $\sim Nq^{2N}$ elements whereas Hofmann--Hoganson--Menon--Verreault--Zaman \cite[Theorem 1.3]{FUSRP-2023} showed that the same set has cardinality \textit{equal} to $(N-1)q^{2N}    - N q^{2N-1} + q^N$.  Our bound in this case is therefore nearly optimal in the $q$-limit. 
\end{remark}
\begin{proof}
We count off-diagonal solutions to $F_1F_2=G_1G_2$ for $F_1,F_2,G_1,G_2\in\mathcal I(K,h)$ with the usual GCD parametrization.
Write $F_1=G A$, $G_1=G B$, $F_2=H B$, $G_2=H A$ where
\[
G=\gcd(F_1,G_1),\qquad H=\gcd(F_2,G_2),
\]
and $(A,B)=1$. Let $d=\deg G=\deg H$. Then $\deg A=\deg B=N-d$. The four membership constraints $GA,HB,GB,HA\in\mathcal I(K,h)$ are equivalent to
\[
\deg(GA-F),\, \deg(HB-F),\, \deg(GB-F),\, \deg(HA-F)\leq h.
\]
Now write $F_1=K+f_1$, $F_2=K+f_2$, $G_1=K+g_1$, $G_2=K+g_2$ with
$f_i,g_i\in\mathcal M_{\leq h}$.  Note that
\[
F_1-G_2 \;=\; (K+f_1)-(K+g_2) \;=\; f_1-g_2,
\]
so $\deg(F_1-G_2)\leq h$. On the other hand, using the parametrization
$F_1=GA,\;G_2=HA$, we have
\[
F_1-G_2 \;=\; GA-HA \;=\; A\,(G-H),
\]
and therefore
\[
\deg A+\deg(G-H)\;=\;\deg\big(A(G-H)\big)\;=\;\deg(F_1-G_2)\;\leq\; h.
\]
Since $\deg(G-H)\ge0$ we deduce $\deg A\leq h$, i.e. $N-d\leq h$. Together with the interval-condition $\deg G=d\leq h$ this forces
\[
N-h\leq d\leq h.
\]
In particular, if $h< N/2$ the interval for $d$ is empty and thus there are no nontrivial solutions.

Assume now $h\geq N/2$ and $N\geq 4$. By symmetry between the roles of $(G,H)$ and $(A,B)$, one of the two factors in each factorization must have degree $\leq \lceil N/2\rceil$, hence we may bound the total by twice the count where we assume $\deg G=d\leq \lceil N/2\rceil $ and sum over such $d$.
Fix such a $d$ with $N-h\leq d\leq \lceil N/2\rceil$. There are at most $q^d$ choices for the monic polynomial $G$. For a fixed $G$ of degree $d$, the condition $GA\in\mathcal I(K,h)$ forces $A$ to lie in the interval quotient $\mathcal I(K,h)/G$ (defined in \S\ref{sec:intervals}), which has radius at most $h-d$. By \cref{lem:interval-divisors} there are at most $q^{h-d+1}$ choices for $A$. The same bound $q^{h-d+1}$ applies to the number of choices for $B$ given the same fixed $G$.
Similarly, for fixed $A$ of degree $N-d$, the polynomial $H$ lies inside $\mathcal{I}(K,h)/A$ (which is of radius $h-(N-d)=h+d-N$), meaning there are at most $q^{h+d-N+1}$ choices for $H$.
Combining these counts, we obtain for each such $d$ that the number of admissible tuples is at most
\[
q^d\cdot\big(q^{\,h-d+1}\big)^2\cdot q^{\,h+d-N+1}
= q^{\,3h-N+3}.
\]
Summing over admissible $d$ yields
\[
\sum_{d=N-h}^{\lceil N/2\rceil} q^{\,3h-N+3}
\;=\;\big(\lceil N/2\rceil-(N-h)+1\big)\,q^{\,3h-N+3}
\;=\;\big(h-\lfloor N/2\rfloor+1\big)\,q^{\,3h-N+3}. 
\]
This gives the desired bound $2\big(h-\lfloor N/2\rfloor+1\big)\,q^{\,3h-N+3}$ by our initial symmetry step. 
\end{proof}

Second, we establish a Hardy--Ramanujan type result for the typical number of irreducible factors $\Omega(F)$ for a polynomial $F$ in a short interval. It is a  consequence of Shiu's bound (\cref{thm:shiu}). 

\begin{lemma}\label[lemma]{lem:short-interval-hardy-ramanujan}
    For any $K \in \mathcal{M}_N$, any integer $N/4 \leq h \leq N-1$, and any fixed $\epsilon > 0$,
    \[
    |\{ F \in \mathcal{I}(K,h) : \Omega(F) > (1+\epsilon) \log N \} | \ll_{\epsilon} |\mathcal{I}(K,h)| \exp\!\Big(-\frac{\varepsilon\log N}{2 \log\log(4N)} \Big). 
    \]
\end{lemma}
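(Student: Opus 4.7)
The plan is to combine Rankin's trick with the uniform Shiu bound of \cref{thm:shiu-general}. For any parameter $z>1$, Rankin's inequality gives
\begin{align*}
|\{F\in\mathcal{I}(K,h):\Omega(F)>(1+\varepsilon)\log N\}|
\leq z^{-(1+\varepsilon)\log N}\sum_{F\in\mathcal{I}(K,h)} z^{\Omega(F)}.
\end{align*}
The function $g(F):=z^{\Omega(F)}$ is non-negative and completely multiplicative with $g(P)=z$ for every prime $P$, so the natural next step is to bound the right-hand sum by \cref{thm:shiu-general}.

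First I would apply \cref{thm:shiu-general} with $\beta=1/5$. Since $h\geq N/4$, the condition $\beta N=N/5<N/4\leq h$ holds, and whenever $z\leq 2^{1/120}$ one checks $g\in\mathscr{M}_{1/120}$ with $A_1=z$ and $A_2=1$, both uniformly bounded in this range. Combined with the function-field Mertens estimate $\sum_{P\in\mathcal{P}_{\leq N}}1/|P|=\log N+O(1)$ (which follows from PNT over $\mathbb{F}_q[t]$), this yields
\begin{align*}
\sum_{F\in\mathcal{I}(K,h)} z^{\Omega(F)}\ll\frac{q^{h+1}}{N}\exp\Big(z\log N+O(1)\Big)\ll\frac{q^{h+1}}{N}\,N^{z}.
\end{align*}
Substituting back into Rankin's inequality gives
\begin{align*}
|\{F\in\mathcal{I}(K,h):\Omega(F)>(1+\varepsilon)\log N\}|
\ll q^{h+1}\,N^{\,z-(1+\varepsilon)\log z-1}.
\end{align*}

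Next I would optimize the free parameter by setting $z:=1+\alpha$ with $\alpha:=1/\log\log(4N)$. For $N$ larger than a threshold depending only on $\varepsilon$, this satisfies both $z\leq 2^{1/120}$ and $\alpha\leq\varepsilon/(1+\varepsilon)$. A Taylor expansion of $f(z):=z-(1+\varepsilon)\log z-1$ around $z=1$ gives
\begin{align*}
f(1+\alpha)=-\varepsilon\alpha+\tfrac{1+\varepsilon}{2}\alpha^{2}+O(\alpha^{3})=-\varepsilon\alpha\,(1+o(1)),
\end{align*}
and hence
\begin{align*}
N^{\,z-(1+\varepsilon)\log z-1}
=\exp\Big(-\tfrac{\varepsilon\log N}{\log\log(4N)}(1+o(1))\Big)
\leq \exp\Big(-\tfrac{\varepsilon\log N}{2\log\log(4N)}\Big)
\end{align*}
for $N$ sufficiently large in terms of $\varepsilon$. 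For the remaining bounded range of $N$, the target upper bound is already $\gg_{\varepsilon}|\mathcal{I}(K,h)|$, so the trivial bound $|\mathcal{I}(K,h)|=q^{h+1}$ suffices after enlarging the implicit constant.

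The only delicate point is a bookkeeping one: verifying that the constants hidden in \cref{thm:shiu-general} stay uniform as the Rankin parameter $z=1+\alpha\to 1^{+}$. Because $\beta=1/5$ is fixed throughout and because $A_{1},A_{2}$ can be taken as absolute constants in the entire range $z\leq 2^{1/120}$, the only surviving $z$-dependence lives in the factor $N^{z}$, which is precisely the factor that the Rankin optimization is designed to beat. No genuinely new short-interval input is needed beyond \cref{thm:shiu-general}.
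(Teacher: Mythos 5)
Your proposal is correct and follows essentially the same route as the paper: Rankin's trick with the completely multiplicative weight $z^{\Omega(F)}$ (the paper takes $z=e^{1/\log\log N}$, you take $z=1+1/\log\log(4N)$, which is the same to first order), bounded via \cref{thm:shiu-general} and the Mertens estimate $\sum_{P\in\mathcal{P}_{\leq N}}|P|^{-1}=\log N+O(1)$. Your choice $\beta=1/5$ even handles the boundary case $h=N/4$ slightly more cleanly than the paper's $\beta=1/4$, and your uniformity bookkeeping for $A_1,A_2$ is sound.
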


\begin{proof} We may assume without loss that $N \geq N_{\epsilon}$ for some sufficiently large constant $N_{\epsilon}$ depending only on $\epsilon$, because otherwise the claim is trivial. 
 Define the non-negative completely multiplicative function $g(F)=\exp(\Omega(F)/\log\log N)$, so  $g(P^\ell)=O( e^{\ell})$ and $g(F)=e^{\Omega(F)/\log\log N} \ll (2^{\deg F})^{1/96}$ for any $F \in \mathcal{M}$ and any irreducible $P \in \mathcal{M}$.  Therefore, from \cref{thm:shiu-general} with $\beta = 1/4$ (and hence $\beta/24 = 1/96$),  it follows that 
\[
\sum_{F\in\mathcal I(K,h)} g(F)
\ll \frac{q^{h+1}}{N}\exp\!\Big(\sum_{P\in \mathcal{P}_{\leq N}}\frac{g(P)}{q^{\deg P}}\Big) 
\ll \frac{q^{h+1}}{N}\exp\!\Big( \log N + \frac{\log N}{\log\log N}  +O \Big(\frac{\log N}{(\log\log N)^2}\Big)\Big).
\]
The last step utilized that \(g(P) = 1+\frac{1}{\log\log N} + O(\frac{1}{(\log\log N)^2} )\) and \(\sum_{P\in \mathcal{P}_{\leq N}}1/q^{\deg P}=\log N+O(1)\). 
Now letting $\mathcal{S}$ denote the subset of $\mathcal{I}(K,h)$ consisting of those polynomials with $\leq (1+\varepsilon)\log N$ prime factors, we have by Rankin's trick that 
\begin{align*}
    |\mathcal{I}(K,h)\setminus\mathcal{S}|&\leq \exp(-(1+\varepsilon)\log N/\log\log N)\sum_{F\in \mathcal{I}(K,h)}g(F)\\
    &\ll q^{h+1}\exp\!\Big(-\frac{\varepsilon\log N}{\log\log N}+O\!\Big(\frac{\log N}{(\log\log N)^2}\Big)\Big) \ll_{\epsilon} q^{h+1} \exp\!\Big(-\frac{\varepsilon\log N}{2 \log\log N} \Big). 
\end{align*}
The lemma now follows since $|\mathcal{I}(K,h)| = q^{h+1}$. 
\end{proof}

\begin{proof}[Proof of \cref{thm:CLT-intervals}]
For both cases (a) and (b), our aim is to apply \cref{thm:main} with $\mathcal{A} = \mathcal{I}(K,h)$ depending on the size of $h$, and we will verify conditions (i), (ii), and (iii). The choice of $\mathcal{S}$ will vary in each case. \\

\noindent
\textbf{Case (a)} \textit{Assume $q^{h} \to \infty$ and $q^{h+1} = o(q^N/N)$.} 

Take 
\[
\mathcal{S} = \mathcal{A} = \mathcal{I}(K,h)
\]
so $|\mathcal{S}| = q^{h+1}$. Condition (i) is trivial.  Condition  \ref{thmmain:iii} follows from \cref{lem:filtration} since $q^h \to \infty$ by assumption. If $h+1 < N/2$ then \cref{prop:short-interval-mult-energy} implies that $E_{\times}(\mathcal{S}) = 2|\mathcal{S}|^2-|\mathcal{S}|$, the number of diagonal solutions, so condition (ii) is trivially satisfied.   If $h+1 \geq N/2$  then  \cref{prop:short-interval-mult-energy} implies that the number of non-diagonal solutions satisfies 
\begin{equation}    \label{eq:energybound}	
    \mathsf{E}_\times(\mathcal{S}) - 2|\mathcal{S}|^2 + |\mathcal{S}| \ll q^{2h+2}\frac{h}{q^{N-(h+1)}} 
    \leq |\mathcal{S}|^2 \frac{q^{h+1}}{q^N/N} = o(|\mathcal{S}|^2)
\end{equation}
since $q^{h+1} = o(q^{N}/N)$ as $q^h \to \infty$ by assumption. Thus, $    \mathsf{E}_\times(\mathcal{S}) = 2|\mathcal{S}|^2 (1+o(1))$ as $q^h \to \infty$. This verifies (iii) and completes the proof of case (a).\\

\noindent
\textbf{Case (b)} \textit{Assume $h \to \infty$ and $q^{h+1} = o(q^N/N^c)$ for some fixed $c > 2\log 2-1$.} \\

By Case (a), we may assume without loss that $h \geq N/2$. Take 
\[
\mathcal{S} = \{ F \in \mathcal{I}(K,h) : \Omega(F) \leq (1+\epsilon) \log N \}. 
\] 
Condition (i) holds since  \cref{lem:short-interval-hardy-ramanujan} implies that $|\mathcal{A}\setminus\mathcal{S}|=o(|\mathcal{A}|)$ as $N\to\infty$, which is guaranteed as $h \to \infty$ and $h \leq N-1$ by assumption. Condition (iii) again holds by \cref{lem:filtration} since $\mathcal{S} \subseteq \mathcal{I}(K,h)$ and $h \to \infty$ by assumption. It remains to verify condition (ii) in this case.

Fix $\epsilon > 0$ sufficiently small. As in the proof of \cref{prop:short-interval-mult-energy},  we again count off-diagonal solutions to $F_1F_2 = G_1G_2$ for $F_1,F_2,G_1,G_2 \in \mathcal{S}$ with the same GCD parametrization. Write $F_1 = GA, G_1 = GB, F_2 = HB,$ and $G_2 = HA$. Denoting $d = \deg G = \deg H$, it follows  by \cref{prop:short-interval-mult-energy} and its proof that $N-h \leq d \leq h$, and we may assume $h \geq N/2$ without loss.  Set $m=(1+ \epsilon)\log N$ so that $\Omega(GABH)\leq 2m$. This yields the following estimate for the off-diagonal solutions:
\begin{align}
    \mathsf{E}_\times(\mathcal{S}) - 2|\mathcal{S}|^2 + |\mathcal{S}|
    &\ll \sum_{d=N-h}^{h} \, \ssum_{\substack{G, H \in \mathcal{M}_d}} \, \ssum_{\substack{A, B \in \mathcal{M}_{N-d}\\ GA, HB, GB, HA\in \mathcal{I}(K,h)}}2^{2m-\Omega(GABH)} \nonumber\\
    &=2^{2m}\sum_{d=N-h}^{h} \sum_{\substack{G\in\mathcal{M}_d}}2^{-\Omega(G)}\ssum_{\substack{A,B\in \mathcal{I}(K,h)/G \\ A,B \in \mathcal{M}_{N-d}}}2^{-\Omega(A)-\Omega(B)}\sum_{\substack{ H\in \mathcal{I}(K,h)/A \\ H \in \mathcal{M}_d}}2^{-\Omega(H)}. \label{eqn:short-intervals-off-diag}
\end{align} 
As described at the end of \S\ref{sec:intervals}, for each $A \in \mathcal{M}_{N-d}$, the interval quotient $\mathcal{I}(K,h)/A$ is \textit{equal} to the interval $\mathcal{I}(\widetilde{K},h+d-N)$ for some $\widetilde{K}$ (depending on $A$) of degree $d = N - (N-d)$ because $h+d-N \geq 0$ in our sum. Thus, by \cref{thm:shiu} with $g(H) = 2^{-\Omega(H)}$, it follows that 
 \[
 \sum_{\substack{ H\in \mathcal{I}(K,h)/A \\ H \in \mathcal{M}_d}}2^{-\Omega(H)} =  \sum_{\substack{ H\in \mathcal{I}(\widetilde{K},h+d-N) }}2^{-\Omega(H)}   \ll \frac{q^{h+d-N+1}}{d} \exp\Big( \sum_{P \in \mathcal{P}_{\leq d} } \frac{1}{2q^{\deg(P)}} \Big) \ll \frac{q^{h+d-N+1}}{d^{1/2}} 
 \]
 for each $A \in \mathcal{M}_{N-d}$, since by the prime number theorem  $\sum_{P \in \mathcal{P}_{\leq d} } q^{-\deg(P)} = \log d +O(1)$. By \eqref{eqn:short-intervals-off-diag}, this implies that 
 \[
\mathsf{E}_\times(\mathcal{S}) - 2|\mathcal{S}|^2 + |\mathcal{S}| \ll 2^{2m}\sum_{d=N-h}^{h} \frac{q^{h+d-N+1}}{d^{1/2}}  \sum_{\substack{G\in\mathcal{M}_d}}2^{-\Omega(G)}\Big( \sum_{\substack{A\in \mathcal{I}(K,h)/G \\ A \in \mathcal{M}_{N-d}}}2^{-\Omega(A)}  \Big)^2
 \]
 since the sums over $A$ and $B$ are identical. For the sum over $A$, the same argument with \cref{thm:shiu} applies. Indeed, for each $G \in \mathcal{M}_d$, the interval quotient $\mathcal{I}(K,h)/G$ is \textit{equal} to the interval $\mathcal{I}(\widetilde{K}',h-d)$ for some $\widetilde{K}'$ (depending on $G$) of degree $N-d$ because $h-d \geq 0$ in our sum. Thus, by \cref{thm:shiu} with $g(A) = 2^{-\Omega(A)}$, we similarly have that
 \[
 \sum_{\substack{A\in \mathcal{I}(K,h)/G \\ A \in \mathcal{M}_{N-d}}}2^{-\Omega(A)} 
 =
 \sum_{\substack{A\in \mathcal{I}(\widetilde{K}',h-d)}}2^{-\Omega(A)} \ll \frac{q^{h-d+1}}{N-d} \exp\Big( \sum_{P \in \mathcal{P}_{\leq N-d} } \frac{1}{2 q^{\deg(P)}} \Big) \ll \frac{q^{h-d+1}}{(N-d)^{1/2}}. 
 \]
Inserting this into the previous equation, we deduce that 
 \[
 \mathsf{E}_\times(\mathcal{S}) - 2|\mathcal{S}|^2 + |\mathcal{S}| \ll 2^{2m}\sum_{d=N-h}^{h} \frac{q^{3h-N-d+3}}{d^{1/2} (N-d)}  \sum_{\substack{G\in\mathcal{M}_d}}2^{-\Omega(G)}. 
 \]
Upon noting $\mathcal{I}(G,d-1) = \mathcal{M}_d$,  we again apply \cref{thm:shiu} on the sum over $G$ to see that
\[
\sum_{\substack{G\in\mathcal{M}_d}}2^{-\Omega(G)} \ll \frac{q^{d+1}}{d^{1/2}}
\]
for each $N-h \leq d \leq h$. Therefore, we may conclude that 
\[
 \mathsf{E}_\times(\mathcal{S}) - 2|\mathcal{S}|^2 + |\mathcal{S}| \ll 2^{2m} q^{3h+3-N}\sum_{d=N-h}^h \frac{1}{d(N-d)} \ll 2^{2m} q^{3h+3-N} \frac{\log N}{N} 
\]
since $N-d \geq h \geq N/2$ and $\sum_{d=N-h}^h \frac{1}{d} \leq \sum_{d=1}^N \frac{1}{d} \ll \log N$. Using our choice $m = (1+\epsilon) N$ and noting $|\mathcal{S}| \asymp |\mathcal{I}(K,h)| = q^{h+1}$ by \cref{lem:short-interval-hardy-ramanujan}, we have overall that 
\[
2|\mathcal{S}|^2 - |\mathcal{S}| \leq \mathsf{E}_\times(\mathcal{S}) \leq  2|\mathcal{S}|^2 - |\mathcal{S}|  + O\Big( |\mathcal{S}|^2 q^{h+1-N}  N^{2\log 2 (1+\epsilon) - 1} \log N \Big). 
\]
The error term is $o(|\mathcal{S}|^2)$ provided $q^{h+1} = o(q^N / N^{2\log 2-1+2\epsilon} )$.  Our assumption states $q^{h+1} = o(q^N/N^c)$ as $h \to \infty$ for some fixed $c > 2\log 2-1$. By fixing $\epsilon = \epsilon(c) > 0$ sufficiently small, we deduce that $\mathsf{E}_\times(\mathcal{S})  = 2 |\mathcal{S}|^2(1+o(1))$ as $h \to \infty$. This verifies (ii) and establishes case (b).   
\end{proof}

\section{Polynomials with few prime factors: proof of \cref{thm:CLT-kprimes}}
\label{sec:almostprimesCLT}
 Recall that for $F \in \mathcal{M}_N$ with $F = P_1^{k_1} P_{2}^{k_2} \cdots P_j^{k_j}$ where $P_i \in \mathcal{P}$ are distinct, we denote
\begin{align*}
    \Omega(F) &:= k_1 + k_2 + \cdots + k_j, \\
    \omega(F) &:= j,
\end{align*}
so that $\Omega(F)$ counts the number of prime factors of $F$ up to multiplicity, whereas $\omega(F)$ counts the number of \textit{distinct} prime factors of $F$. Further, for a permutation $\pi \in \textmd{Sym}(N)$, we define $K(\pi)$ to be the number of cycles in that permutation.  Then, we define the following four sets:
\begin{align*}
    \mathcal{P}_k(N) &= \{F \in \mathcal{M}_N : \Omega(F) = k\}, \\
    \mathcal{D}_k(N) &= \{ F \in \mathcal{M}_N : \omega(F) = k\}, \\
    \mathcal{S}_k(N) &= \{ F \in \mathcal{M}_N: \Omega(F) = \omega(F) = k\}, \\
    \mathcal{C}_k(N) &= \{ \pi \in \textmd{Sym}(N): K(\pi) = k\}.
\end{align*}
We are interested in the case where $k = o(\log N)$ as $N\to\infty$, and we will need uniform estimates for the size of the different sets above in the proof of \cref{thm:CLT-kprimes}.

We begin with two estimates for the size of $\mathcal{P}_k(N)$. The first is due to Elboim--Gorodetsky \cite[Corollary 1.1]{Ofir-kprimes}, and bounds the deviation of $|\mathcal{P}_k(N)|$ from its ``expected" size according to its analogue $\mathcal{C}_k(N)$ in the permutation context. The second is due to Hwang \cite{HWANG}, who provides an asymptotic estimate for $|\mathcal{C}_k(N)|$ which we specialize to the case $k = o(\log N)$. 

\begin{lemma}[Elboim--Gorodetsky] \label[lemma]{lem:primes-to-cycles}
For  $r \leq \frac{3}{2}$, we have as $q^N\to \infty$ that

$$ \Big|  \frac{N!}{q^N}\frac{|\mathcal{P}_k(N)|}{|\mathcal{C}_k(N)|} - h_q(r) \Big| = \Big|\frac{\mathbb{P}(\Omega(F) = k)}{\mathbb{P}\left(K(\pi \right)=k)}-h_q(r)\Big| \leq \frac{C k}{q(\log N)^2} $$
for an explicit $C > 1$, where $h_q(x)=\prod_{P \in \mathcal{P}}\big(1-\frac{x}{|P|}\big)^{-1}\big(1-\frac{1}{|P|}\big)^x$ and $\displaystyle r = \frac{k-1}{\log N}$.
\end{lemma}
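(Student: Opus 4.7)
My plan is to attack this through generating functions and the Selberg--Delange method, treating $|\mathcal{P}_k(N)|$ and $|\mathcal{C}_k(N)|$ as coefficients of bivariate generating functions and comparing them contour-by-contour. The relevant identities are
\[
\sum_{F\in\mathcal{M}} u^{\Omega(F)} z^{\deg F} = \prod_{P\in\mathcal{P}} \frac{1}{1-u z^{\deg P}}, \qquad \sum_{N\geq 0}\sum_{\pi\in\mathrm{Sym}(N)} u^{K(\pi)} \frac{z^N}{N!} = \frac{1}{(1-z)^u}.
\]
Extracting $[u^k z^N]$ from each yields $|\mathcal{P}_k(N)|$ and $|\mathcal{C}_k(N)|/N!$ respectively, so understanding their ratio reduces to understanding the discrepancy between the two generating functions near their dominant singularities.

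The next step would be to factor the polynomial generating function as
\[
\prod_{P}\frac{1}{1-u(z/q)^{\deg P}} = \frac{G_q(u,z)}{(1-z)^{u}},
\]
where $G_q(u,z)$ is designed to be analytic in a disc of radius strictly larger than $1$ in $z$, at least for $u$ in a compact set. The existence of such a factorization follows from the function field prime number theorem $\sum_P z^{\deg P}/|P| = \log(1/(1-z)) + O(1)$, which shows that the $u$-weighted Euler product and $(1-z)^{-u}$ have the same leading singularity at $z=1$. Evaluating at $z=1$ gives $G_q(u,1) = h_q(u)$ after a short Euler-product computation using $\pi_q(d) = q^d/d + O(q^{d/2}/d)$. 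Substituting $z \mapsto z/q$, one obtains
\[
\frac{|\mathcal{P}_k(N)|}{q^N} = [u^k z^N]\, G_q(u,z)(1-z)^{-u}, \qquad \frac{|\mathcal{C}_k(N)|}{N!} = [u^k z^N](1-z)^{-u}.
\]

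Now I would apply Cauchy's formula on a Hankel-type contour hugging $z=1$, which is the appropriate saddle-point contour for $(1-z)^{-u}$ when the dominant value of $u$ is the saddle $r = (k-1)/\log N$. On this contour, write $G_q(u,z) = G_q(u,1) + (G_q(u,z) - G_q(u,1))$. The first piece contributes exactly $h_q(r)\, |\mathcal{C}_k(N)|/N!$ up to a $[u^k]$ extraction; the second piece is the error. Using the analyticity of $G_q$ in a neighborhood of $z=1$ and the bound $|G_q(u,z)-G_q(u,1)| \ll |z-1|/q$ (the $1/q$ coming from the $q^{-\deg P}$ saving in $\partial_z G_q$), one derives that the error is suppressed by a factor of order $k/(q (\log N)^2)$ relative to the main term — the extra $(\log N)^{-2}$ saving coming from integrating $|z-1|$ against the Hankel measure for $(1-z)^{-r}$ with $r \asymp k/\log N$.

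The main obstacle is controlling $G_q(u,z)$ uniformly in $q$ and $u$ with explicit constants, especially in the regime $r \leq 3/2$ where $u$ can approach the boundary where the Euler product loses absolute convergence, and extracting the clean constant $C$ rather than an implicit $O$-bound. This requires a careful Hankel contour analysis together with a quantitative version of the polynomial prime number theorem to estimate the $z$-derivative of the logarithm of $G_q$. Once this is in hand, the $1/q$ gain and the $(\log N)^{-2}$ gain from the Hankel integral combine to yield the stated error $Ck/(q(\log N)^2)$.
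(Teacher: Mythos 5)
This lemma is not proved in the paper: it is quoted verbatim from Elboim--Gorodetsky \cite[Corollary 1.1]{Ofir-kprimes}, so there is no internal proof to compare against. Your plan is nonetheless in the right spirit --- the cited result is indeed obtained by a Selberg--Delange/singularity-analysis comparison of $\prod_P(1-u(z/q)^{\deg P})^{-1}$ with $(1-z)^{-u}$, and your factorization $G_q(u,z)(1-z)^{-u}$ with $G_q(u,1)=h_q(u)$ and $G_q$ analytic in $|z|<q/|u|$ is the correct setup (and $q/|u|\ge 4/3>1$ for $|u|\le 3/2$, $q\ge2$, so the regime $r\le 3/2$ is safe).

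The genuine gap is in where you locate the error term $Ck/(q(\log N)^2)$. You attribute it to the $z$-aspect: bounding $G_q(u,z)-G_q(u,1)\ll|z-1|/q$ and ``integrating $|z-1|$ against the Hankel measure.'' But inserting an extra factor $|1-z|$ into the Hankel integral for $(1-z)^{-u}$ changes $N^{u-1}/\Gamma(u)$ into $O(N^{u-2}/|\Gamma(u-1)|)$, a \emph{relative} saving of order $|u-1|/N=O(1/N)$, not $(\log N)^{-2}$. That part of the error is therefore far smaller than the claimed bound and cannot be its source. The dominant error actually lives in the step you wave away as ``up to a $[u^k]$ extraction'': after the $z$-analysis one must compare $[u^k]\,h_q(u)\binom{N+u-1}{N}$ with $h_q(r)\,[u^k]\binom{N+u-1}{N}$. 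This requires Taylor-expanding $h_q(u)$ about the saddle $u=r=(k-1)/\log N$, using $h_q'(r),h_q''(r)\ll 1/q$ (the degree-one Euler factors dominate) together with the concentration estimate that, under the tilted measure implicit in the $[u^k]$ saddle-point extraction, $(u-r)$ effectively has size $\sqrt{k}/\log N$, so the quadratic term contributes the relative error $\asymp k/(q(\log N)^2)$. Without carrying out this $u$-saddle analysis (essentially Hwang's quasi-power machinery, as in \cref{lem:cycles-asymptotic}), your outline does not produce the stated error term; with it, the $z$-aspect error you do treat is absorbed harmlessly. I would also flag that obtaining an \emph{explicit} constant $C$ forces every $O$-term above (the zeta-function error $\pi_q(d)-q^d/d$, the $j\ge2$ Euler-product tails, and the Hankel tail estimates) to be made numerically effective, which is a substantial amount of bookkeeping beyond the sketch.
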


\begin{lemma}[Hwang] \label[lemma]{lem:cycles-asymptotic} Let $A>0$. As $N \rightarrow \infty$, uniformly for  $1 \leq k \leq A \log N$,
$$\frac{|\mathcal{C}_k(N)|}{N!} = \mathbb{P}(K(\pi)=k)=\frac{1}{N} \frac{(\log N)^{k-1}}{(k-1)!} \frac{1}{\Gamma(r+1)}\Big(1+O_A\Big(\frac{k}{(\log N)^2}\Big)\Big),$$
where $r = \frac{k-1}{\log N}$.
\end{lemma}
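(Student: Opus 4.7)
The plan is to extract the coefficient $[x^k]$ from the well-known generating function for the unsigned Stirling numbers of the first kind, following classical analytic combinatorics. I would first record the identity
\[
\sum_{k=0}^{N}|\mathcal{C}_k(N)|\,x^k \;=\; x(x+1)\cdots(x+N-1) \;=\; \frac{\Gamma(x+N)}{\Gamma(x)},
\]
so that $\mathbb{P}(K(\pi)=k) = [x^k]\,\Gamma(x+N)/(N!\,\Gamma(x))$. Applying Stirling's formula in the uniform form $\Gamma(x+N)/\Gamma(N+1) = N^{x-1}(1 + O((1+|x|^2)/N))$ valid for bounded $x$, together with $1/\Gamma(x) = x/\Gamma(x+1)$, reduces the problem to estimating $[x^{k-1}]\, N^{x}/\Gamma(x+1)$ up to an additive error of order $1/N$ times the main term, which is comfortably absorbed into the $O_A(k/(\log N)^2)$ remainder.

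Next, I would extract the coefficient via Cauchy's formula around a small circle and rescale $\tau = x\log N$ to obtain
\[
[x^{k-1}]\frac{N^{x}}{\Gamma(x+1)} \;=\; \frac{(\log N)^{k-1}}{2\pi i}\oint \frac{e^{\tau}}{\tau^{k}\,\Gamma(\tau/\log N + 1)}\, d\tau.
\]
Expanding $1/\Gamma(1+z) = \sum_{j\geq 0} b_j z^j$ (the entire Taylor series of the reciprocal gamma function, with $b_0=1$, $b_1=\gamma$, etc.) and integrating termwise using the Hankel representation $\frac{1}{2\pi i}\oint e^{\tau}/\tau^{k-j}\,d\tau = 1/(k-j-1)!$ produces
\[
\frac{(\log N)^{k-1}}{(k-1)!}\sum_{j\geq 0} b_j\,\frac{(k-1)(k-2)\cdots(k-j)}{(\log N)^{j}}.
\]
The key identity is that this sum equals $\sum_{j\geq 0} b_j r^{j} = 1/\Gamma(r+1)$ up to errors of order $k/(\log N)^{2}$, because $(k-1)(k-2)\cdots(k-j)/(\log N)^{j} = r^{j}\prod_{i=1}^{j-1}(1-i/(k-1))$ and the product expands as $1 - \binom{j}{2}/(k-1) + O(j^{4}/k^{2})$.

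The main obstacle is making this heuristic rigorous and uniform for $1 \leq k \leq A\log N$. Concretely, I would truncate the Taylor series at some slowly-growing cutoff $J$ (say $J = \lfloor\log\log N\rfloor$), bound the tail using the Cauchy estimates $|b_j|\ll R^{-j}$ on a disk of fixed radius $R > A$, and verify that the inherited remainder is $O_A(k/(\log N)^2)$ uniformly for $r \in [0,A]$. The dominant correction comes from the $-\binom{j}{2}/(k-1)$ term, which sums to
\[
-\frac{1}{k-1}\sum_{j\geq 0} b_j \binom{j}{2} r^{j} \;=\; -\frac{r^{2}}{2(k-1)}\cdot\frac{d^{2}}{dr^{2}}\Big[\frac{1}{\Gamma(1+r)}\Big],
\]
and the second derivative is bounded on $[0,A]$ by analyticity. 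Since $r^{2}/(k-1) = (k-1)/(\log N)^{2}$, this is exactly the claimed error. The subtlety is that a naive saddle-point expansion of $\Gamma(\tau/\log N + 1)^{-1}$ around $\tau = k$ would appear to introduce a spurious $O(1/\log N)$ correction; expanding instead around $\tau = 0$ via the Taylor series of $1/\Gamma$ avoids this by making the algebraic cancellation between rising factorials and powers explicit.
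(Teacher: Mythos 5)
The paper does not prove this lemma at all: it is quoted as a black-box citation to Hwang's asymptotic expansion for the Stirling numbers of the first kind, specialized to $1\leq k\leq A\log N$. So your proposal is not competing with an argument in the paper; it is a self-contained reconstruction, and the route you take --- extracting $[x^k]$ from $x(x+1)\cdots(x+N-1)=\Gamma(x+N)/\Gamma(x)$, applying uniform Stirling to reduce to $[x^{k-1}]N^x/\Gamma(x+1)$, and evaluating that coefficient exactly by termwise Hankel integration of the Taylor series of $1/\Gamma(1+z)$ --- is essentially Hwang's own singularity-analysis proof. The core computation is correct: $[x^{k-1}]N^x/\Gamma(x+1)=\sum_{j=0}^{k-1}b_j(\log N)^{k-1-j}/(k-1-j)!$, the falling-factorial identity $(k-1)\cdots(k-j)/(\log N)^j=r^j\prod_{i<j}(1-i/(k-1))$ is right, and the dominant correction $-\tfrac{r^2}{2(k-1)}\,\tfrac{d^2}{dr^2}[1/\Gamma(1+r)]=O_A((k-1)/(\log N)^2)$ is exactly the claimed error. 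What each approach buys: the citation keeps the paper short; your proof makes the lemma self-contained and makes transparent where the factor $1/\Gamma(r+1)$ comes from.

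Two places deserve more care than your sketch gives them. First, absorbing the Stirling error: writing $\Gamma(x+N)/\Gamma(N+1)=N^{x-1}(1+E(x))$ with $E$ analytic and $O(1/N)$ on compact sets, the quantity $[x^{k-1}](N^xE(x)/\Gamma(x+1))$ must be bounded by a Cauchy estimate on a circle of radius $\asymp r$ (or a fixed small radius when $k$ is bounded), where $\rho^{-(k-1)}\max_{|x|=\rho}|N^x/\Gamma(x+1)|\ll\sqrt{k}\,(\log N)^{k-1}/(k-1)!$; only then is the relative error $O(\sqrt{k}/N)$, which is indeed $\ll k/(\log N)^2$. Saying it is ``comfortably absorbed'' skips this saddle-radius choice. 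Second, the secondary error $O(j^4/k^2)$ and the tail $\sum_{j\geq k}b_jr^j$ are individually \emph{not} $O(k/(\log N)^2)$ unless you use that they carry at least two powers of $r$ (the $j=0,1$ terms contribute no error, and $|b_j|\ll_R R^{-j}$ gives $\sum_{j\geq k}|b_j|r^j\ll_A 2^{-k}r^2\ll r^2/k$); for $k=2$ a naive $O(1/k^2)$ bound would be fatally large. The $k=1$ case ($r=0$, empty products) should also be noted separately, though it is exact. With these points spelled out the proof is complete and correct.
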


 Next, we record two results of Afshar--Porritt \cite[Theorem 1 and Remark 2.11]{AfsharPorrit-2019} estimating the asymptotic sizes of $\mathcal{S}_k(N)$ and $\mathcal{D}_k(N)$. 
\begin{lemma}[Afshar--Porritt] \label[lemma]{lem:afshar-porritt-S}
 Let $A>1$. Then uniformly for all $N \geqslant 2$ and $1 \leq k \leq A \log N$,
\begin{align*}
	|\mathcal{S}_k(N)| =\frac{q^N}{N} \frac{(\log N)^{k-1}}{(k-1)!}\Big(G(r)+O_A\Big(\frac{k}{(\log N)^2}\Big)\Big),
\end{align*}
where $r=\frac{k-1}{\log N}$, $G(z)=\frac{F(1 / q, z)}{\Gamma(1+z)}$, and $F(1 / q, z)=\prod_{P \in \mathcal{P}}\big(1+\frac{z}{|P|}\big)\big(1-\frac{1}{|P|}\big)^z$.
\end{lemma}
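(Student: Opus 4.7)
The plan is to adapt the Selberg--Delange method to the function field setting, exploiting the fact that the zeta function $Z(u) = \sum_{F \in \mathcal{M}} u^{\deg F} = (1-qu)^{-1}$ is rational. The starting point is the two-variable generating identity
\begin{equation*}
\mathcal{G}(u,x) := \sum_{F \in \mathcal{M}\text{ squarefree}} x^{\omega(F)} u^{\deg F} = \prod_{P \in \mathcal{P}} \big(1 + x u^{\deg P}\big),
\end{equation*}
so that $|\mathcal{S}_k(N)|$ is simultaneously the coefficient of $x^k$ and of $u^N$ in $\mathcal{G}(u,x)$.

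The first step is to isolate the singularity of $\mathcal{G}(u,x)$ in the $u$-variable at $u=1/q$. Writing
\begin{equation*}
\mathcal{G}(u,x) \;=\; (1-qu)^{-x}\, H(u,x), \qquad H(u,x) := (1-qu)^{x} \prod_{P} (1 + xu^{\deg P})(1 - u^{\deg P})^{x},
\end{equation*}
one checks by Euler-product manipulations (using $Z(u)^x = (1-qu)^{-x} = \prod_P (1-u^{\deg P})^{-x}$ as a formal identity) that $H(u,x)$ is analytic in $|u| < q^{-1/2}$ uniformly for $x$ in a bounded complex disc, with
\begin{equation*}
H(1/q, x) \;=\; \prod_{P} \Big(1 + \tfrac{x}{|P|}\Big)\Big(1 - \tfrac{1}{|P|}\Big)^{x} \;=\; F(1/q, x).
\end{equation*}
This is the function-field analogue of the standard Selberg--Delange factorization.

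The second step is to extract the coefficient $[u^N]$ via Cauchy's formula on a circle of radius slightly less than $q^{-1/2}$, split as a principal contribution near $u = 1/q$ and a remainder. Because of rationality, this can be made cleanly: expanding $(1-qu)^{-x} = \sum_{n \geq 0} q^n \binom{n+x-1}{n} u^n$, convolving with the Taylor coefficients of $H(u,x)$ in $u$ around $0$, and using Stirling's formula for $\binom{N+x-1}{N} = \Gamma(N+x)/(\Gamma(x)\Gamma(N+1)) \sim N^{x-1}/\Gamma(x)$, one obtains
\begin{equation*}
[u^N]\, \mathcal{G}(u,x) \;=\; q^N\, \frac{N^{x-1}}{\Gamma(x)}\, F(1/q, x)\, \big(1 + O(1/N)\big)
\end{equation*}
uniformly for $x$ in a small complex neighbourhood of $r = (k-1)/\log N$ (which stays in a bounded set since $k \leq A \log N$).

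The third and main step is then to extract $[x^k]$ by a saddle-point analysis, the saddle being $x = r$. Writing $N^{x-1}/\Gamma(x) = e^{x \log N}/(x\Gamma(x))$ and applying Cauchy on the circle $|x| = r$ gives
\begin{equation*}
|\mathcal{S}_k(N)| \;=\; \frac{q^N}{N} \cdot \frac{1}{2\pi i} \oint \frac{F(1/q,x)}{\Gamma(1+x)}\, \frac{e^{x \log N}}{x^{k}}\, dx \cdot \big(1+O(1/N)\big),
\end{equation*}
and a standard saddle-point evaluation on the circle $|x| = r$ (substitute $x = re^{i\theta}$, Taylor-expand $F(1/q,x)/\Gamma(1+x)$ around $r$, and estimate the tail $|\theta| \gg 1/\sqrt{k}$ by the monotonicity of the integrand) yields the asymptotic $\frac{(\log N)^{k-1}}{(k-1)!}\,G(r)$ plus the claimed error term. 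The main obstacle is keeping the $O_A(k/(\log N)^2)$ error uniform for $k$ as large as $A \log N$: one must carefully track how the quadratic term in the saddle-point expansion (which contributes $O(1/k)$ in the local phase but $O(k/(\log N)^2)$ globally after multiplication by $\log^2 N$) combines with the $O(1/N)$ relative error from step two, and one needs a quantitative bound on $|F(1/q,x)/\Gamma(1+x)|$ and its derivatives on the saddle circle. This is where the care of Afshar--Porritt~\cite{AfsharPorrit-2019} is genuinely needed and where my proposal would simply defer to their uniform analysis.
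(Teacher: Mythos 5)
This lemma is not proved in the paper at all: it is quoted directly from Afshar--Porritt \cite[Theorem 1 and Remark 2.11]{AfsharPorrit-2019}, so there is no in-paper argument to compare against. Your sketch is the Selberg--Delange route that Afshar--Porritt themselves follow (isolate the $(1-qu)^{-x}$ singularity of $\prod_P(1+xu^{\deg P})$, extract $[u^N]$ via the rationality of the zeta function, then extract $[x^{k-1}]$ by a saddle-point integral around $r$), so as an outline it is faithful to the actual proof of the cited result; note only the small slip $N^{x-1}/\Gamma(x)=e^{x\log N}/(x\Gamma(x))$, which should read $N^{x-1}/\Gamma(x)=\tfrac{x\,e^{x\log N}}{N\,\Gamma(1+x)}$ (your subsequent Cauchy integral with the prefactor $q^N/N$ and kernel $e^{x\log N}x^{-k}$ is nevertheless the correct one). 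Since you explicitly defer the uniform error analysis --- the only genuinely hard part, and the entire content of the $O_A(k/(\log N)^2)$ term --- back to Afshar--Porritt, your proposal is a correct roadmap rather than an independent proof, which is exactly the status the lemma has in this paper as well.
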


\begin{lemma}[Afshar--Porritt] \label[lemma]{lem:afshar-porritt-D}
     Let $A>1$. Then uniformly for all $N \geqslant 2$ and $1 \leq k \leq A \log N$,
    $$|\mathcal{D}_k(N)| = \frac{q^N}{N} \frac{(\log N)^{k-1}}{(k-1)!} \Big(\widetilde{G}(r) + O_A\Big(\frac{k}{(\log N)^2}\Big)\Big)$$
    where $r=\frac{k-1}{\log N}$, $\widetilde{G}(z)=\frac{\widetilde{F}(1/q,z)}{\Gamma(1+z)}$, and $\widetilde{F}(T,z)=\prod_{P\in\mathcal{P}}(1+\frac{zT^{\deg P}}{1-T^{\deg P}})$.
\end{lemma}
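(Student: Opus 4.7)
The plan is to adapt the Selberg--Delange method to the function field setting via the two-variable generating series
\[
\widetilde F(T,z)=\sum_{F\in\mathcal{M}} z^{\omega(F)} T^{\deg F}=\prod_{P\in\mathcal{P}}\Big(1+\frac{zT^{\deg P}}{1-T^{\deg P}}\Big),
\]
so that $|\mathcal{D}_k(N)|=[T^N z^k]\widetilde F(T,z)$. First I would isolate the dominant singularity of the zeta function $Z(T)=(1-qT)^{-1}$ by factoring $\widetilde F(T,z)=(1-qT)^{-z} H(T,z)$, where
\[
H(T,z):=\prod_{P\in\mathcal{P}}\Big(1+\frac{zT^{\deg P}}{1-T^{\deg P}}\Big)(1-T^{\deg P})^z.
\]
A short logarithmic expansion shows that the linear-in-$T^{\deg P}$ contributions cancel in the Euler product defining $H$, so the prime polynomial theorem gives analytic continuation of $H(T,z)$ to a $T$-disk of radius $q^{-1/2}+\delta$, uniformly for $z$ in any fixed bounded region; moreover $H(1/q,z)=\widetilde F(1/q,z)=\Gamma(1+z)\widetilde G(z)$.

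Next I would expand $(1-qT)^{-z}=\sum_m \binom{m+z-1}{m} q^m T^m$, convolve with the Taylor series of $H(T,z)$, and combine the binomial asymptotic $\binom{N-j+z-1}{N-j}=\frac{(N-j)^{z-1}}{\Gamma(z)}(1+O(\tfrac{1+|z|^2}{N-j}))$ with the geometric decay of $[T^j]H(T,z)$ to obtain
\[
[T^N]\widetilde F(T,z)=q^N\, z\, N^{z-1}\widetilde G(z)\Big(1+O\Big(\tfrac{1+|z|^2}{N}\Big)\Big),
\]
uniformly for $z$ on a fixed compact set, using $\Gamma(1+z)/\Gamma(z)=z$. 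This is the function-field analogue of the Selberg--Delange main term and is exactly the ingredient that replaces $h_q$ by $\widetilde G$ relative to \cref{lem:primes-to-cycles}.

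Finally, to extract $[z^k]$ I would apply Hwang's quasi-powers/saddle-point theorem to $(z\widetilde G(z))\,e^{z\log N}$, which has saddle $r=(k-1)/\log N$; this is the same mechanism behind \cref{lem:cycles-asymptotic}. A Hankel-contour computation then yields
\[
[z^k]\big(zN^{z-1}\widetilde G(z)\big)=\frac{1}{N}\cdot\frac{(\log N)^{k-1}}{(k-1)!}\Big(\widetilde G(r)+O_A\Big(\frac{k}{(\log N)^2}\Big)\Big),
\]
uniformly for $1\leq k\leq A\log N$, which combined with the previous step gives the claimed asymptotic. The main obstacle is this uniformity: one must run the saddle-point analysis uniformly as $r$ ranges over $[0,A]$, which requires $H(T,z)$ to remain analytic in a fixed $T$-neighborhood of $T=1/q$ for $z$ in a disk of radius strictly larger than $A$, together with uniform control of $\widetilde G$ and its logarithmic derivatives there. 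Achieving the clean error $O_A(k/(\log N)^2)$ (as opposed to the naive $O_A(k/\log N)$ from just evaluating at the saddle) demands a Taylor expansion of $z\widetilde G(z)$ about the saddle to pick up the second-order correction; merging this with the $O((1+|z|^2)/N)$ remainder from the Selberg--Delange step, which becomes $O(k^2/(N\log^2 N))$ after integration and is absorbed, is the delicate bookkeeping that constitutes the heart of the proof.
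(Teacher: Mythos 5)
The paper does not prove this lemma at all: it is imported verbatim from Afshar--Porritt \cite[Theorem 1 and Remark 2.11]{AfsharPorrit-2019}, so there is no in-paper argument to compare against. Your outline is essentially the proof that appears in that reference: a function-field Selberg--Delange factorization $\widetilde F(T,z)=(1-qT)^{-z}H(T,z)$ to get $[T^N]\widetilde F(T,z)=q^N zN^{z-1}\widetilde G(z)(1+O((1+|z|^2)/N))$ uniformly on compacta, followed by a Hwang-type quasi-powers extraction of $[z^k]$ at the saddle $r=(k-1)/\log N$, with the second-order Taylor term at the saddle producing the $O_A(k/(\log N)^2)$ error. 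The structure and the identification of where the uniformity is delicate are both right.

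One point deserves care. As literally written in the lemma (and in your first display), the Euler product $\prod_P\bigl(1+\frac{zT^{\deg P}}{1-T^{\deg P}}\bigr)$ diverges at $T=1/q$ for $z\neq 0$, since the factors are $1+z|P|^{-1}+O(|P|^{-2})$ and $\sum_P |P|^{-1}$ diverges; so the equation $H(1/q,z)=\widetilde F(1/q,z)$ is not an identity between two convergent products. What is true, and what you need, is that $H(1/q,z)=\prod_P\bigl(1+\frac{z/|P|}{1-1/|P|}\bigr)\bigl(1-\frac{1}{|P|}\bigr)^z$ converges (each factor is $1+O((1+|z|^2)|P|^{-2})$, with the sum over $P$ bounded uniformly in $q$), and this regularized value is what $\widetilde F(1/q,z)$ must mean in the definition of $\widetilde G$. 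Your $H$ already carries the convergence factors, so your argument is fine once this is stated; also, $H(T,z)$ is analytic for $|T|<q^{-1/2}$ (not out to $q^{-1/2}+\delta$), which still strictly contains $T=1/q$ and suffices for the geometric decay of $[T^j]H$ you invoke.
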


Finally, we shall require a combinatorial lemma due to Aggarwal--Subedi--Verreault--Zaman--Zheng \cite[Lemma 5]{FUSRP2020}  for bounding certain sums appearing in the computation of multiplicative energy, of which we state a special case.
\begin{lemma}[Aggarwal--Subedi--Verreault--Zaman--Zheng] \label[lemma]{lem:almostprime-sums} 
    If $k$ and $N$ are integers such that $2\leq k\leq(\log N)/3$, then
    \begin{align*}
        \sum_{\substack{k_1,n_1,k_2,n_2\geq 1\\ k_1+k_2=k\\ N_1+N_2=N}}|\mathcal{P}_{k_1}(N_1)|^2|\mathcal{P}_{k_2}(N_2)|^2\ll\frac{q^{2N}(\log N+2-\log 2)^{2k-4}}{N^2(k-2)!^2}.
    \end{align*}
    In particular, the sum is $o(|\mathcal{P}_k(N)|^2)$ as $N \to \infty$ provided $k = o(\log N)$. 
\end{lemma}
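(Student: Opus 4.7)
The plan is a direct Hardy--Ramanujan substitution followed by a Vandermonde-type combinatorial identity. The key ingredient is a uniform upper bound
\[
|\mathcal{P}_m(n)| \;\leq\; c\,\frac{q^n}{n}\,\frac{(\log n + A)^{m-1}}{(m-1)!},
\]
valid uniformly for $1\leq m \leq (\log n)/2$ and $n \geq n_0$, with absolute constants $c, A > 0$. Such an inequality is a routine consequence of combining \cref{lem:primes-to-cycles} (which controls the ratio $|\mathcal{P}_m(n)|/|\mathcal{C}_m(n)|$) with \cref{lem:cycles-asymptotic} (Hwang's asymptotic for $|\mathcal{C}_m(n)|/n!$); the constant $A$ absorbs the bounded factors $h_q(r)/\Gamma(r+1)$ and the admissible error terms. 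The range $k \leq (\log N)/3$ in the statement ensures $k_1, k_2$ remain well within the range of validity.

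Substituting this inequality, squaring, and using $N_1,N_2\leq N$ to replace $\log N_i+A$ by $\log N+A$, the sum is bounded by
\[
c^4\,q^{2N}(\log N + A)^{2(k-2)}\,\Bigl(\sum_{N_1+N_2=N}\tfrac{1}{N_1^2 N_2^2}\Bigr)\Bigl(\sum_{\substack{k_1+k_2=k\\k_1,k_2\geq 1}}\tfrac{1}{((k_1-1)!)^2((k_2-1)!)^2}\Bigr).
\]
The $N$-sum is $\ll 1/N^2$ by splitting on whether $N_1 \leq N/2$. The $k$-sum, after reindexing $j_i=k_i-1$, evaluates via the identity $\sum_{j_1+j_2=k-2}\tfrac{1}{(j_1!)^2(j_2!)^2} = \tfrac{\binom{2(k-2)}{k-2}}{((k-2)!)^2} \leq \tfrac{4^{k-2}}{((k-2)!)^2}$. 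Absorbing $4^{k-2}$ into the base then yields a bound of the shape $\tfrac{q^{2N}(\log N + B)^{2(k-2)}}{N^2((k-2)!)^2}$ for some absolute constant $B$. Recovering the precise constant $B=2-\log 2$ claimed in the statement requires treating the boundary terms $k_1=1$ or $k_2=1$ separately from the bulk $k_1,k_2\geq 2$: in the boundary cases the sharper estimate $\pi_q(N_i) \leq q^{N_i}/N_i$ obviates any Vandermonde loss, and those contributions dominate the others in the admissible range of $k$.

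For the ``in particular'' conclusion, combining \cref{lem:primes-to-cycles,lem:cycles-asymptotic} in the asymptotic direction gives $|\mathcal{P}_k(N)|^2\asymp (q^{2N}/N^2)(\log N)^{2(k-1)}/((k-1)!)^2$ when $k=o(\log N)$. Dividing the displayed upper bound by this yields
\[
\frac{\text{stated bound}}{|\mathcal{P}_k(N)|^2}\;\ll\;\frac{(k-1)^2}{(\log N)^2}\Bigl(1+\tfrac{2-\log 2}{\log N}\Bigr)^{2k-4},
\]
which tends to $0$: the first factor because $k=o(\log N)$, the second because $(1+c/\log N)^{2k}\leq\exp(2ck/\log N)\to 1$. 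The main technical obstacle is the bookkeeping needed to extract the explicit constant $2-\log 2$ rather than merely an absolute $O(1)$; the Vandermonde identity, the $N$-summation, and the extraction of a Hardy--Ramanujan inequality from the asymptotic lemmas are all routine. For the CLT applications the explicit constant is immaterial, so only the overall shape and the $o(|\mathcal{P}_k(N)|^2)$ corollary are essential.
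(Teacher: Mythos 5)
The paper does not prove this lemma; it is quoted verbatim from \cite[Lemma 5]{FUSRP2020}, so there is no internal proof to compare against. Judged on its own terms, your argument has two genuine gaps, the second of which is fatal to the stated conclusion.

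First, the uniform inequality $|\mathcal{P}_m(n)|\leq c\,\frac{q^n}{n}\frac{(\log n+A)^{m-1}}{(m-1)!}$ is needed for \emph{all} $1\leq m\leq n$, not just $m\leq(\log n)/2$. Your claim that ``$k\leq(\log N)/3$ ensures $k_1,k_2$ remain well within the range of validity'' conflates $N$ with $N_i$: the convolution includes terms with $N_1$ as small as $k_1$, where $k_1$ vastly exceeds $(\log N_1)/2$, and there \cref{lem:primes-to-cycles} (which needs $r\leq 3/2$) and \cref{lem:cycles-asymptotic} (which needs $k\leq A\log N$ for fixed $A$) say nothing. The trivial bound $|\mathcal{P}_{k_1}(N_1)|\leq q^{N_1}$ is not an acceptable substitute for these small-$N_1$ terms, since one genuinely needs the factors $1/N_1$ and $1/(k_1-1)!$ there. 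What is required is a true Hardy--Ramanujan inequality over $\mathbb{F}_q[t]$, valid uniformly in $k\leq N$; this is provable by the classical induction on $k$ via the Chebyshev identity, but it does not follow from the two asymptotic lemmas you cite.

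Second, and more seriously, decoupling the $N$-sum from the $k$-sum (replacing each $\log N_i+A$ by $\log N+A$ and then applying Vandermonde) costs a factor $\binom{2k-4}{k-2}\asymp 4^k/\sqrt{k}$, and this loss cannot be absorbed: for $k\asymp\sqrt{\log N}$, which is permitted under $k=o(\log N)$, one has $4^{k}/(\log N)^2\to\infty$, so your weakened bound is \emph{not} $o(|\mathcal{P}_k(N)|^2)$ and the ``in particular'' conclusion fails. Your proposed repair --- that the boundary terms $k_1=1$ or $k_2=1$ dominate --- is false for the decoupled estimate: after reindexing, the central terms $j_1\approx j_2\approx(k-2)/2$ of $\sum_{j_1+j_2=k-2}(j_1!)^{-2}(j_2!)^{-2}$ exceed the boundary terms $j_i=0$ by a factor $\asymp 4^{k}/\sqrt{k}$, so treating $k_1=1$ separately sharpens nothing about the bulk. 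The correct route keeps the two sums coupled: for the smaller variable, say $N_1\leq N/2$, sum over $N_1$ \emph{first} using
\begin{equation*}
\sum_{N_1\geq 1}\frac{(\log N_1+A)^{2k_1-2}}{N_1^2}\;\ll\;e^{A}\,(2k_1-2)!,
\end{equation*}
which replaces the factor $(\log N+A)^{2k_1-2}/((k_1-1)!)^2$ by $\binom{2k_1-2}{k_1-1}\leq 4^{k_1-1}$; comparing with the target then leaves a factor at most $\big(4k^2/(\log N+A)^2\big)^{k_1-1}\leq(4/9)^{k_1-1}$ precisely because $k\leq(\log N)/3$ --- a hypothesis your argument never actually invokes --- and the resulting geometric series over $k_1$ is $O(1)$. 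Without this coupling the lemma as stated (and its corollary) is out of reach of your method.
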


 We are now ready to prove \cref{thm:CLT-kprimes}.

\begin{proof}[Proof of \cref{thm:CLT-kprimes}]
	For cases (a), (b), and (c), we shall apply \cref{thm:main-intro} for all three examples $\mathcal{A} \in \{ \mathcal{P}_k(N), \mathcal{D}_k(N), \mathcal{S}_k(N) \}$ with the same choice  
	\[
	\mathcal{S}:=\mathcal{S}_k(N)=\{F\in\mathcal{M}_N:\Omega(F)=\omega(F)=k\}. 
	\]
	We must verify conditions \eqref{main-minimum-size}, \eqref{main-asymptotic-subset}, and \eqref{main-mult-energy} for each case. Recall $N \to \infty$ by assumption so we may assume $N \geq N_0$ for some sufficiently large absolute constant $N_0$. 
	
	For condition \eqref{main-minimum-size}, since $\mathcal{S} \subseteq \mathcal{A}$ in each case, we only need to give an adequate lower bound for $|\mathcal{S}|$. By \cref{lem:afshar-porritt-S}, we have $|\mathcal{S}| \geq \frac{1}{2}\frac{q^N}{N}\frac{(\log N)^{k-1}}{(k-1)!} $ for sufficiently large $N$. Using Stirling's formula and the assumption $k=o(\log N)$ as $N \to \infty$, we find
\[
\log|\mathcal S|=N\log q + (k-1)\big(\log\log N-\log(k-1)+1\big) + O(\log N)=N\log q + O(\log N \log\log N)
\]
so we certainly have that  $|\mathcal{S}| \gg q^N \exp(- \frac{1}{3} \sqrt{N \log q} )$. This verifies \eqref{main-minimum-size}. 

For condition  \eqref{main-asymptotic-subset}, note the case $\mathcal{A} = \mathcal{S}_k(N)$ is trivial since $\mathcal{A} = \mathcal{S}$. The case $\mathcal{A} = \mathcal{D}_k(N)$ follows by combining \cref{lem:afshar-porritt-D,lem:afshar-porritt-S} along with the observations  that 
\[
G(\tfrac{k-1}{\log N}) \to G(0)=1 \quad \text{ and } \quad \widetilde{G}(\tfrac{k-1}{\log N}) \to \widetilde{G}(0) = 1
\]
because $k = o(\log N)$ as $N \to \infty$. Indeed, the above limits hold uniformly with respect to $q$ (cf. \cite[Remark 2.6]{AfsharPorrit-2019}). The case $\mathcal{A}=\mathcal{P}_k(N)$ follows from \cref{lem:afshar-porritt-S,lem:cycles-asymptotic,lem:primes-to-cycles}, since these imply
\begin{align*}
    |\mathcal{P}_k(N)|=(1+o(1))\frac{q^N|\mathcal{C}_k(N)|}{N!}=\frac{q^N}{N}\frac{(\log N)^{k-1}}{(k-1)!}(1+o(1))=(1+o(1))|\mathcal{S}_k(N)|
\end{align*}
	as $k=o(\log N)$. We have used that the limit $h_q(\tfrac{k}{\log N}) \to h_q(0)= 1$ holds uniformly with respect to $q$, which is justified by \cite[Lemma 2.1]{Ofir-kprimes}. 

Lastly, for condition \eqref{main-mult-energy}, we estimate the multiplicative energy $\mathsf{E}_{\times}(\mathcal{S})$. Recall that if we set $G=\gcd(F_1,G_1)$ and $H=\gcd(F_2,G_2)$, we can parametrize all such solutions by $F_1=GA$, $F_2=HB$, $G_1=GB$, and $G_2=HA$ for $A,B$ coprime polynomials of degree $N-\deg G=N-\deg H$ and $G,H$ arbitrary polynomials of degree $\leq N$. Then the quadruple $(A,B,G,H)$ corresponds to a non-diagonal solution if and only if $\deg A \geq 1$ and  $\deg B \geq 1$. Finally, by additivity of the functions $F\mapsto \deg F$ and $F\mapsto\Omega(F)$, if $\deg G = m$ and $\Omega(G) = j$, then $ \deg H = m, \deg A=\deg B = N - m$, $\Omega(H)=\Omega(G) = j$, and $\Omega(A)=\Omega(B) = k - j$. Taking into account the diagonal solutions which contribute $2|\mathcal{S}|^2-|\mathcal{S}|$, it follows that 
$$2 |\mathcal{S}|^2 -|\mathcal{S}|\leq \mathsf{E}_{\times}(\mathcal{S}) \leq 2 |\mathcal{S}|^2-|\mathcal{S}| + \displaystyle{\sum_{j=1}^{k-1} \sum_{m=1}^{N-1} | \mathcal{S}_j(m)|^2 |\mathcal{S}_{k-j}(N-m)|^2}.$$ 
By appealing to \cref{lem:almostprime-sums} and the relation $\mathcal{S}_j(m) \subseteq \mathcal{P}_j(m)$ for any $j$ and $m$, we deduce that 
\[
2 |\mathcal{S}|^2 -|\mathcal{S}| \leq \mathsf{E}_{\times}(\mathcal{S}) \leq 2 |\mathcal{S}|^2 -|\mathcal{S}| +  O\Big( \frac{q^{2N}(\log N+2-\log 2)^{2k-4}}{N^2(k-2)!^2} \Big). 
\]
By \cref{lem:afshar-porritt-S}, the error is $o(|\mathcal{S}|^2)$ for $k = o(\log N)$ as $N \to \infty$ and hence $\mathsf{E}_{\times}(\mathcal{S}) = 2|\mathcal{S}|^2 (1 +o(1))$. This establishes \eqref{main-mult-energy} and  completes the proof of \cref{thm:CLT-kprimes}. 
\end{proof}

\section{Shifted primes: proof of \cref{thm:CLT-shiftedprimes}}\label{sec:shiftedprimesCLT}

\begin{proof}[Proof of \cref{thm:CLT-shiftedprimes}] We will apply \cref{thm:main-intro} for our choice of $\mathcal{A}$ by verifying conditions \eqref{main-minimum-size}, \eqref{main-asymptotic-subset}, and \eqref{main-mult-energy}. Take 
\[
\mathcal{S} = \mathcal{A} = \{ P + Z : P \in \mathcal{P}_N \}.
\]
Observe that 
\[
|\mathcal{A}| = |\mathcal{P}_N| = \pi_q(N) \geq \frac{q^N}{N} - 2\frac{q^{N/2}}{N},
\] 
so condition \eqref{main-minimum-size} is easily satisfied. Condition \eqref{main-asymptotic-subset} holds trivially since $\mathcal{S} = \mathcal{A}$. 

It remains to verify condition \eqref{main-mult-energy} which will form the bulk of the argument. We want to count off-diagonal solutions to the equation
\[
(P+Z)(Q+Z) = (R+Z)(S+Z)
\]
for $P,Q,R,S \in \mathcal{M}_N$ irreducible. Recall $Z \in \mathcal{M}$ satisfies $\deg Z \leq N-1$ by assumption, so $P+Z, Q+Z, R+Z, S+Z \in \mathcal{M}_N$. We parametrize the solutions by setting $G=\gcd(P,R)$ and $H=\gcd(Q,S)$, so 
\[
P+Z = GA, \quad  Q+Z = HB, \quad  R+Z = GB, \quad  S+Z = HA
\]
where $\gcd(A,B) = 1$. The above conditions imply $\deg G = \deg H = N - \deg A = N - \deg B$. The diagonal solutions correspond to $G = H$ or $A= B$ (which forces $A=B=1$) and thus non-diagonal solutions  must satisfy $N-1 \geq \deg G \geq 1$ . When counting such solutions, we will ignore the $\gcd$ condition for $(A,B)$ so, by symmetry between the pairs $(G,H)$ and $(A,B)$, we may assume without loss that $\deg G \geq N/2$. Let $\mathbbm{1}_{\mathcal{P}}$ be the indicator function for primes. Collecting these observations implies that the number of off-diagonal solutions may be bounded as
 \[
\mathsf{E}_{\times}(\mathcal{S}) - 2|\mathcal{S}|^2 + |\mathcal{S}| \ll \Xi, 
 \]
 where 
\begin{equation} \label{eqn:prime-shift-Xi}
\Xi :=   \sum_{N/2 \leq d < N}  \,\ssum_{\substack{A,B \in \mathcal{M}_{N-d} \\ A \neq B}} \, \ssum_{\substack{G,H \in \mathcal{M}_d \\ G \neq H}}  \mathbbm{1}_{\mathcal{P}}(GA-Z) \mathbbm{1}_{\mathcal{P}}(HB-Z) \mathbbm{1}_{\mathcal{P}}(GB-Z) \mathbbm{1}_{\mathcal{P}}(HA-Z).
\end{equation}
Since $|\mathcal{S}| = \pi_q(N) \asymp q^N/N$, it suffices to show that $\Xi = o(q^{2N}/N^2)$ as $N \to \infty$.  

For each integer $1 \leq d \leq N-1$ and each pair $(A,B) \in \mathcal{M}_{N-d}$ with $A \neq B$, we will estimate the number of $G \in \mathcal{M}_d$ such that $GA-Z$ and $GB-Z$ are simultaneously prime. For all primes $P$ with $\deg P \leq N/2$, we want $(GA-Z)(GB-Z) \not\equiv 0 \pmod{P}$ since $GA-Z$ and $GB-Z$ are degree $N$ monic polynomials. Define the set 
\[
\mathcal{X} = \mathcal{X}^{(A,B,d)} := \{(GA-Z)(GB-Z): G \in \mathcal{M}_d\} \quad \text{ and } \quad W = W^{(d)} := \prod_{P \in \mathcal{P}_{\leq d/2}  }  P. 
\]
Observe if $GA-Z$ and $GB-Z$ are prime then $\gcd((GA-Z)(GB-Z), W) = 1$, so 
\begin{equation} \label{eqn:prime-shift-sieve-setup}
\sum_{G \in \mathcal{M}_d}  \mathbbm{1}_{\mathcal{P}}(GA-Z)  \mathbbm{1}_{\mathcal{P}}(GB-Z) 
 \leq \sum_{\substack{F \in \mathcal{X} \\ (F, W)=1}} 1 
\end{equation}
by construction. We proceed to apply Selberg's sieve (\cref{lem:selberg-sieve}) to the righthand sum.  

First, we compute the size of $\mathcal{X}$, claiming $|\mathcal{X}| = q^d$. Certainly, $|\mathcal{X}| \leq |\mathcal{M}_d| = q^d$ by definition. The reverse inequality follows once we establish that the function $G\mapsto (GA-Z)(GB-Z)$ is injective on $\mathcal{M}_d$. Observe that 
\[
(GA-Z)(GB-Z) = (KA-Z)(KB-Z) \iff AB(G-K)(G+K) = (G-K)(A+B)Z.
\]
Suppose, for a contradiction, that $G \neq K$ so 
\[
AB(G+K) = (A+B)Z.
\]
Since $A, B \in \mathcal{M}_{N-d}$ are monic  and $q \geq 3$ by assumption\footnote{This step is the only moment where we require $q \neq 2$.}, we have that $\deg(A+B) = N-d$. Similarly,  we have that $\deg(G+K) = d$. The above equation therefore implies that
\[
(N-d) + (N-d) + d = N-d + \deg Z \implies \deg Z = N,
\]
a contradiction to our assumption that $\deg Z \leq N-1$. This proves that the desired map is injective, and hence $|\mathcal{X}| \geq q^d$. This establishes the claim that $|\mathcal{X}| = q^d$.  

Second, we compute the local densities of $\mathcal{X}$ by computing the size of the set 
\[
\mathcal{X}_D = \mathcal{X}_D^{(A,B,d)} :=  \{ G \in \mathcal{M}_d : D \text{ divides } (GA-Z)(GB-Z) \}
\]
for every monic squarefree polynomial $D$ with degree $\leq d$. Observe that $G \in \mathcal{X}_D$ if and only if for each irreducible $P \mid D$ we have 
\[
GA \equiv Z \pmod{P} \quad \text{ or } \quad GB \equiv Z \pmod{P}.
\]
If $P \nmid AB (A-B)$ then the above local condition is equivalent to stating $G$ may lie in exactly 2 residue classes modulo $P$. If $P \mid AB(A-B)$ then, as $A \neq B$, the above local condition is equivalent to stating $G$ may lie in exactly 1 residue class modulo $P$. By Sun Tzu's remainder theorem, it follows for every monic squarefree polynomial $D$ with degree $\leq d$ that 
\[
|\mathcal{X}_D| = \frac{q^d}{g(D)} \quad \text{ where } g(P) = 
	\begin{cases} 
		|P|/2 & \text{ if } P \nmid AB(A-B) \\ 
		|P| & \text{ if } P \mid AB(A-B)
	\end{cases}
\]
for every monic irreducible $P$. Notice $q^d = |\mathcal{X}|$ and that $|\mathcal{X}_D|$ is computed without any error.

Third, we apply Selberg's sieve to estimate $|\mathcal{X}|$. Let $\mathcal{D}$ be the set of squarefree polynomials of degree $\leq d/2$, which is a divisor-closed subset of the divisors of $W$.  For $D, D' \in \mathcal{D}$, we have $\deg([D, D']) \leq d$. Applying Selberg's sieve (\cref{lem:selberg-sieve} with $R_{[D, D']} = 0$ for all $D, D' \in \mathcal{D}$), it follows that 
$$
\sum_{\substack{F \in \mathcal{X} \\ (F, W)=1}} 1 \leq \frac{|\mathcal{X}|}{\sum_{D \in \mathcal{D}} g(D)^{-1} \prod_{Q \mid D}\left(1-g(Q)^{-1}\right)^{-1}}.
$$
Thus, we only require a lower bound on $$ \sum_{D \in \mathcal{D}} g(D)^{-1} \prod_{Q|D} (1-g(Q)^{-1})^{-1} = \sum_{D \in \mathcal{D}} \frac{1}{|D|} \prod_{Q|D_1} \frac{|Q|}{|Q|-1} \prod_{P | D_2} \frac{|P|}{|P|-2} = \sum_{D \in \mathcal{D}} \prod_{Q|D_1} \frac{1}{|Q|-1} \prod_{P|D_2} \frac{2}{|P|-2},$$ 
where we write $D = D_1 D_2$ where $D_1$ is the part of $D$ which divides $AB(B-A)Z$ and $D_2$ is co-prime to it. Note, in the last part of the proof of \cite[Lemma 2]{Pollack-2008} (more specifically, on page 8), Pollack bounds this exact quantity below for an arbitrary $M$ instead of $AB(B-A)Z$. They show, for $D = D_1 D_2$ with $D_1$ being the part dividing $M$ and $D_2$ being the part co-prime to it,  that 

$$ \sum_{D \in \mathcal{D}} \prod_{Q |D_1} \frac{1}{|Q|-1} \prod_{P |D_2} \frac{2}{|P|-2} \geq \frac{\phi(M)}{|M|} \frac{d^2}{8},$$
 where $\phi$ is the totient function. Overall, these calculations combined with \eqref{eqn:prime-shift-sieve-setup} imply that 

\begin{equation} \label{eqn:prime-shift-sieve}
\sum_{G \in \mathcal{M}_d}  \mathbbm{1}_{\mathcal{P}}(GA-Z)  \mathbbm{1}_{\mathcal{P}}(GB-Z) 
 \leq \sum_{\substack{F \in \mathcal{X} \\ (F, W)=1}} 1 \leq 8 \frac{|AB(B-A)Z|}{\phi(AB(B-A)Z)} \frac{q^d}{d^2}
\end{equation}
for $\mathcal{X} = \mathcal{X}^{(A,B,d)}$ and any choice of $A,B \in \mathcal{M}_{N-d}$ with $A \neq B$ and $N/2 \leq d < N$. This finally completes our analysis for the inner $G$-sum in \eqref{eqn:prime-shift-Xi}, so we may proceed to total our estimates. 

Returning to \eqref{eqn:prime-shift-Xi}, we appeal to \eqref{eqn:prime-shift-sieve} for both the $G$-sum and the $H$-sum to find that
\[
\Xi \leq 64 \sum_{N/2 \leq d < N} \,\ssum_{\substack{A,B \in \mathcal{M}_{N-d} \\ A \neq B}} \frac{|AB(B-A)Z|^2}{\phi(AB(B-A)Z)^2} \frac{q^{2d}}{d^4}.
\]
The polynomial $AB(B-A)Z$ has degree $\leq 4N$ so, by \cref{lem:totient}, we have that
\[
\frac{|AB(B-A)Z|^2}{\phi(AB(B-A)Z)^2} \ll (\log N)^4.
\]
We conclude that 
\begin{align*}
\Xi  \ll (\log N)^4 \sum_{N/2 \leq d < N} \,\ssum_{\substack{A,B \in \mathcal{M}_{N-d} \\ A \neq B}} \frac{q^{2d}}{d^4} 
	& \ll q^{2N} (\log N)^4 \sum_{N/2 \leq d < N} \frac{1}{d^4} \\
	& \ll \frac{q^{2N}}{N^2} \frac{(\log N)^4}{N}. 
\end{align*}
Therefore, $\Xi = o(q^{2N}/N^2)$ as $N \to \infty$. This completes the proof of \cref{thm:CLT-shiftedprimes}. 
\end{proof}

\section{Rough polynomials: proof of \cref{thm:CLT-rough}}\label{sec:roughpolyCLT}
\begin{proof}[Proof of \cref{thm:CLT-rough}]
If $z>N/2$, then a $z$-rough polynomial is equivalently an irreducible polynomial, so the theorem follows from the standard complex central limit theorem; thus, we may henceforth assume  $\sqrt{N} \leq z\leq N/2$. Now we will verify the three conditions of \cref{thm:main-intro} for
    \[
    \mathcal{S}=\mathcal{A} = \{ F \in \mathcal{M}_N : P^-(F) > z \}. 
    \]
Using  our notation from \S\ref{sec:smoothroughpolys}, observe that $|\mathcal{S}| = \Phi(N,z) \asymp q^N/z$ by \cref{lem:rough-bsg}. 
    
    Condition \eqref{main-asymptotic-subset} holds trivially since $\mathcal{S}=\mathcal{A}$. Condition \eqref{main-minimum-size} follows from \cref{lem:rough-bsg} and the assumption $z\leq N/2$. It remains to verify \eqref{main-mult-energy}. Via the usual parametrization with greatest common divisors, any solution to $F_1 F_2 = G_1 G_2$ corresponds to quadruples $(G, H, A, B)$ with $F_1 = GA$, $F_2 = HB$, $G_1 = GB$, $G_2 = HA$, where $G$ and $H$ can be any monic polynomials of degree $d\leq N$, and $A$, $B$ are coprime monic polynomials of degree $N - d$.
All polynomials $G$, $H$, $A$, and $B$ appearing in such a decomposition must also be $z$-rough, so
\begin{align*}
    \mathsf{E}_\times(\mathcal{S})\leq\sum_{d=0}^N\Phi(d,z)^2\Phi(N-d,z)^2.
\end{align*}
Isolating the end points and using \cref{lem:rough-bsg} with $q\geq 2$ on the other summands, we obtain 
\[
\mathsf{E}_\times(\mathcal{S}) \leq 2 \Phi(N, z)^2 + 16 q^{2N} \cdot \frac{N-1}{z^4}.
\]
	By \cref{lem:rough-bsg}, the righthand side is $(2+o(1))|\mathcal{S}|^2$ provided that $N^{1/2}=o(z)$ as $N\to\infty$.   Since $|\mathsf{E}_\times(\mathcal{S})| \geq 2|\mathcal{S}|^2 - |\mathcal{S}|$ due to the diagonal solutions, this establishes \eqref{main-mult-energy}. 
\end{proof}

\printbibliography

@article {Sawin-2021,
    AUTHOR = {Sawin, Will},
     TITLE = {Square-root cancellation for sums of factorization functions
              over short intervals in function fields},
   JOURNAL = {Duke Math. J.},
  FJOURNAL = {Duke Mathematical Journal},
    VOLUME = {170},
      YEAR = {2021},
    NUMBER = {5},
     PAGES = {997--1026},
      ISSN = {0012-7094,1547-7398},
   MRCLASS = {11T55 (11M38 11N37)},
  MRNUMBER = {4255048},
MRREVIEWER = {R\'egis\ Blache},
       DOI = {10.1215/00127094-2020-0060},
       URL = {https://doi-org.myaccess.library.utoronto.ca/10.1215/00127094-2020-0060},
}

@article {Tamam-2014,
    AUTHOR = {Tamam, Nattalie},
     TITLE = {The fourth moment of {D}irichlet {$L$}-functions for the
              rational function field},
   JOURNAL = {Int. J. Number Theory},
  FJOURNAL = {International Journal of Number Theory},
    VOLUME = {10},
      YEAR = {2014},
    NUMBER = {1},
     PAGES = {183--218},
      ISSN = {1793-0421,1793-7310},
   MRCLASS = {11M38 (11N36 11T06)},
  MRNUMBER = {3189975},
MRREVIEWER = {Arnaud\ Chadozeau},
       DOI = {10.1142/S1793042113500899},
       URL = {https://doi-org.myaccess.library.utoronto.ca/10.1142/S1793042113500899},
}

@misc{AtherfoldNajnudel-2025,
      title={The Fourier coefficients of the critical holomorphic multiplicative chaos}, 
      author={Christopher Atherfold and Joseph Najnudel},
      year={2025},
      eprint={2508.13849},
      archivePrefix={arXiv},
      primaryClass={math.PR},
      url={https://arxiv.org/abs/2508.13849}, 
      howpublished="\url{https://arxiv.org/abs/2508.13849}",
}

@misc{GorodetskyWong-2025,
       AUTHOR = {O. Gorodetsky and M. D. Wong},
        TITLE = {On the limiting distribution of sums of random multiplicative functions}, 
      JOURNAL = {},
     FJOURNAL = {},
       VOLUME = {},
         YEAR = {2025},
        PAGES = {},
         ISSN = {},
      MRCLASS = {},
     MRNUMBER = {},
   MRREVIEWER = {},
       EPRINT = {2508.12956},
ARCHIVEPREFIX = {arXiv},
 PRIMARYCLASS = {math.NT},
          URL = {https://arxiv.org/abs/2508.12956},
 HOWPUBLISHED = "\url{https://arxiv.org/abs/2508.12956}",
}

@misc{GorodetskyWong-2025-2,
       AUTHOR = {O. Gorodetsky and M. D. Wong},
        TITLE = {Multiplicative chaos measure for multiplicative functions: the $L^1$-regime}, 
      JOURNAL = {},
     FJOURNAL = {},
       VOLUME = {},
         YEAR = {2025},
        PAGES = {},
         ISSN = {},
      MRCLASS = {},
     MRNUMBER = {},
   MRREVIEWER = {},
       EPRINT = {2503.10555},
ARCHIVEPREFIX = {arXiv},
 PRIMARYCLASS = {math.NT},
          URL = {https://arxiv.org/abs/2503.10555},
 HOWPUBLISHED = "\url{https://arxiv.org/abs/2503.10555}",
}

@misc{Harper-2024,
       AUTHOR = {A. J. Harper},
        TITLE = {Moments of random multiplicative functions, III: A short review}, 
      JOURNAL = {},
     FJOURNAL = {},
       VOLUME = {},
         YEAR = {2024},
        PAGES = {},
         ISSN = {},
      MRCLASS = {},
     MRNUMBER = {},
   MRREVIEWER = {},
       EPRINT = {2410.11523},
ARCHIVEPREFIX = {arXiv},
 PRIMARYCLASS = {math.NT},
          URL = {https://arxiv.org/abs/2410.11523},
 HOWPUBLISHED = "\url{https://arxiv.org/abs/2410.11523}",
}

@article {Thorne-2008,
    AUTHOR = {Thorne, F.},
     TITLE = {Irregularities in the distributions of primes in function
              fields},
   JOURNAL = {J. Number Theory},
  FJOURNAL = {Journal of Number Theory},
    VOLUME = {128},
      YEAR = {2008},
    NUMBER = {6},
     PAGES = {1784--1794},
      ISSN = {0022-314X,1096-1658},
   MRCLASS = {11T55 (11N05)},
  MRNUMBER = {2419194},
MRREVIEWER = {Mireille\ Car},
       DOI = {10.1016/j.jnt.2007.08.014},
       URL = {https://doi.org/10.1016/j.jnt.2007.08.014},
}

@article {Mcleish-1974,
    AUTHOR = {McLeish, D. L.},
     TITLE = {Dependent central limit theorems and invariance principles},
   JOURNAL = {Ann. Probability},
  FJOURNAL = {The Annals of Probability},
    VOLUME = {2},
      YEAR = {1974},
     PAGES = {620--628},
      ISSN = {0091-1798},
   MRCLASS = {60F05},
  MRNUMBER = {358933},
MRREVIEWER = {B.\ M.\ Brown},
       DOI = {10.1214/aop/1176996608},
       URL = {https://doi.org/10.1214/aop/1176996608},
}

@misc{BarySorokerGoldgraber-2023,
       AUTHOR = {Lior Bary-Soroker and Noam Goldgraber},
        TITLE = {Product Sets of Arithmetic Progressions in Function Fields}, 
      JOURNAL = {},
     FJOURNAL = {},
       VOLUME = {},
         YEAR = {2023},
        PAGES = {},
         ISSN = {},
      MRCLASS = {},
     MRNUMBER = {},
   MRREVIEWER = {},
       EPRINT = {2309.05046},
ARCHIVEPREFIX = {arXiv},
 PRIMARYCLASS = {math.NT},
          URL = {https://arxiv.org/abs/2309.05046},
 HOWPUBLISHED = "\url{https://arxiv.org/abs/2309.05046}",
}

@article {SoundXu-2023,
    AUTHOR = {Soundararajan, Kannan and Xu, Max Wenqiang},
     TITLE = {Central limit theorems for random multiplicative functions},
   JOURNAL = {J. Anal. Math.},
  FJOURNAL = {Journal d'Analyse Math\'ematique},
    VOLUME = {151},
      YEAR = {2023},
    NUMBER = {1},
     PAGES = {343--374},
      ISSN = {0021-7670,1565-8538},
   MRCLASS = {11K65 (11N60 60F05)},
  MRNUMBER = {4682955},
       DOI = {10.1007/s11854-023-0331-y},
       URL = {https://doi.org/10.1007/s11854-023-0331-y},
}

@article {Xu-2022,
    AUTHOR = {Xu, Max Wenqiang},
     TITLE = {Better than square-root cancellation for random multiplicative
              functions},
   JOURNAL = {Trans. Amer. Math. Soc. Ser. B},
  FJOURNAL = {Transactions of the American Mathematical Society. Series B},
    VOLUME = {11},
      YEAR = {2024},
     PAGES = {482--507},
      ISSN = {2330-0000},
   MRCLASS = {11N37 (11K65 60G15 60G57)},
  MRNUMBER = {4707490},
       DOI = {10.1090/btran/175},
       URL = {https://doi.org/10.1090/btran/175},
}

@article {SoundZaman-2022,
    AUTHOR = {Soundararajan, Kannan and Zaman, Asif},
     TITLE = {A model problem for multiplicative chaos in number theory},
   JOURNAL = {Enseign. Math.},
  FJOURNAL = {L'Enseignement Math\'ematique},
    VOLUME = {68},
      YEAR = {2022},
    NUMBER = {3-4},
     PAGES = {307--340},
      ISSN = {0013-8584,2309-4672},
   MRCLASS = {11K65 (60F99)},
  MRNUMBER = {4452425},
MRREVIEWER = {Ben\ Joseph\ Green},
       DOI = {10.4171/lem/1031},
       URL = {https://doi.org/10.4171/lem/1031},
}

@article {Webb-1983,
    AUTHOR = {Webb, William A.},
     TITLE = {Sieve methods for polynomial rings over finite fields},
   JOURNAL = {J. Number Theory},
  FJOURNAL = {Journal of Number Theory},
    VOLUME = {16},
      YEAR = {1983},
    NUMBER = {3},
     PAGES = {343--355},
      ISSN = {0022-314X,1096-1658},
   MRCLASS = {12C05 (10H32 12C15)},
  MRNUMBER = {707607},
MRREVIEWER = {H.\ G.\ Diamond},
       DOI = {10.1016/0022-314X(83)90062-8},
       URL = {https://doi.org/10.1016/0022-314X(83)90062-8},
}

@article {Pollack-2008,
    AUTHOR = {Pollack, Paul},
     TITLE = {A polynomial analogue of the twin prime conjecture},
   JOURNAL = {Proc. Amer. Math. Soc.},
  FJOURNAL = {Proceedings of the American Mathematical Society},
    VOLUME = {136},
      YEAR = {2008},
    NUMBER = {11},
     PAGES = {3775--3784},
      ISSN = {0002-9939,1088-6826},
   MRCLASS = {11T55 (11N32)},
  MRNUMBER = {2425715},
MRREVIEWER = {Mireille\ Car},
       DOI = {10.1090/S0002-9939-08-09351-9},
       URL = {https://doi.org/10.1090/S0002-9939-08-09351-9},
}

@article {Hildebrand-1985,
    AUTHOR = {Hildebrand, Adolf},
     TITLE = {Integers free of large prime divisors in short intervals},
   JOURNAL = {Quart. J. Math. Oxford Ser. (2)},
  FJOURNAL = {The Quarterly Journal of Mathematics. Oxford. Second Series},
    VOLUME = {36},
      YEAR = {1985},
    NUMBER = {141},
     PAGES = {57--69},
      ISSN = {0033-5606,1464-3847},
   MRCLASS = {11N25},
  MRNUMBER = {780350},
MRREVIEWER = {M.\ Ram\ Murty},
       DOI = {10.1093/qmath/36.1.57},
       URL = {https://doi.org/10.1093/qmath/36.1.57},
}

@article {Harper-2013,
    AUTHOR = {Harper, Adam J.},
     TITLE = {On the limit distributions of some sums of a random
              multiplicative function},
   JOURNAL = {J. Reine Angew. Math.},
  FJOURNAL = {Journal f\"ur die Reine und Angewandte Mathematik. [Crelle's
              Journal]},
    VOLUME = {678},
      YEAR = {2013},
     PAGES = {95--124},
      ISSN = {0075-4102,1435-5345},
   MRCLASS = {60F05 (60-02 60E15 60G50)},
  MRNUMBER = {3056104},
MRREVIEWER = {Erich\ Haeusler},
       DOI = {10.1515/crelle.2012.027},
       URL = {https://doi.org/10.1515/crelle.2012.027},
}

@misc{FUSRP-2023,
       AUTHOR = {Maximilian C. E. Hofmann and Annemily Hoganson and Siddarth Menon and William Verreault and Asif Zaman},
        TITLE = {Moments of random multiplicative functions over function fields}, 
      JOURNAL = {},
     FJOURNAL = {},
       VOLUME = {},
         YEAR = {2024},
        PAGES = {},
         ISSN = {},
      MRCLASS = {},
     MRNUMBER = {},
   MRREVIEWER = {},
       EPRINT = {2408.08309},
ARCHIVEPREFIX = {arXiv},
 PRIMARYCLASS = {math.NT},
          URL = {https://arxiv.org/abs/2408.08309},
 HOWPUBLISHED = "\url{https://arxiv.org/abs/2408.08309}"
}

@book {Gut-Probability,
    AUTHOR = {Gut, Allan},
     TITLE = {Probability: a graduate course},
    SERIES = {Springer Texts in Statistics},
   EDITION = {Second},
 PUBLISHER = {Springer, New York},
      YEAR = {2013},
      ISBN = {978-1-4614-4707-8; 978-1-4614-4708-5},
   MRCLASS = {60-01 (60F05 60F15 60G42)},
  MRNUMBER = {2977961},
       DOI = {10.1007/978-1-4614-4708-5},
       URL = {https://doi.org/10.1007/978-1-4614-4708-5},
}

@book {Rosen-FF,
    AUTHOR = {Rosen, Michael},
     TITLE = {Number theory in function fields},
    SERIES = {Graduate Texts in Mathematics},
    VOLUME = {210},
 PUBLISHER = {Springer-Verlag, New York},
      YEAR = {2002},
     PAGES = {xii+358},
      ISBN = {0-387-95335-3},
   MRCLASS = {11R58 (11R60 11T55)},
  MRNUMBER = {1876657},
MRREVIEWER = {Ernst-Ulrich\ Gekeler},
       DOI = {10.1007/978-1-4757-6046-0},
       URL = {https://doi.org/10.1007/978-1-4757-6046-0},
}

@article {RandomChowla,
    AUTHOR = {Klurman, Oleksiy and Shkredov, Ilya D. and Xu, Max Wenqiang},
     TITLE = {On the random {C}howla conjecture},
   JOURNAL = {Geom. Funct. Anal.},
  FJOURNAL = {Geometric and Functional Analysis},
    VOLUME = {33},
      YEAR = {2023},
    NUMBER = {3},
     PAGES = {749--777},
      ISSN = {1016-443X,1420-8970},
   MRCLASS = {11K65 (11N37 60F05 60F15)},
  MRNUMBER = {4597641},
MRREVIEWER = {Joni\ Ter\"av\"ainen},
       DOI = {10.1007/s00039-023-00641-y},
       URL = {https://doi.org/10.1007/s00039-023-00641-y},
}

@article {FUSRP2020,
    AUTHOR = {Aggarwal, Daksh and Subedi, Unique and Verreault, William and
              Zaman, Asif and Zheng, Chenghui},
     TITLE = {Sums of random multiplicative functions over function fields
              with few irreducible factors},
   JOURNAL = {Math. Proc. Cambridge Philos. Soc.},
  FJOURNAL = {Mathematical Proceedings of the Cambridge Philosophical
              Society},
    VOLUME = {173},
      YEAR = {2022},
    NUMBER = {3},
     PAGES = {715--726},
      ISSN = {0305-0041,1469-8064},
   MRCLASS = {11K65 (11N60 60F05 60G50)},
  MRNUMBER = {4497978},
MRREVIEWER = {Marco\ Aymone},
       DOI = {10.1017/S030500412200010X},
       URL = {https://doi.org/10.1017/S030500412200010X},
}

@article {AfsharPorrit-2019,
    AUTHOR = {Afshar, Ardavan and Porritt, Sam},
     TITLE = {The function field {S}athe-{S}elberg formula in arithmetic
              progressions and `short intervals'},
   JOURNAL = {Acta Arith.},
  FJOURNAL = {Acta Arithmetica},
    VOLUME = {187},
      YEAR = {2019},
    NUMBER = {2},
     PAGES = {101--124},
      ISSN = {0065-1036,1730-6264},
   MRCLASS = {11T55 (11M38)},
  MRNUMBER = {3897499},
MRREVIEWER = {Antonino\ Leonardis},
       DOI = {10.4064/aa170726-24-4},
       URL = {https://doi.org/10.4064/aa170726-24-4},
}

@inproceedings {RudnickSurvey,
    AUTHOR = {Rudnick, Zeev},
     TITLE = {Some problems in analytic number theory for polynomials over a
              finite field},
 BOOKTITLE = {Proceedings of the {I}nternational {C}ongress of
              {M}athematicians---{S}eoul 2014. {V}ol. {II}},
     PAGES = {443--459},
 PUBLISHER = {Kyung Moon Sa, Seoul},
      YEAR = {2014},
      ISBN = {978-89-6105-805-6; 978-89-6105-803-2},
   MRCLASS = {11T55 (11N05 11N13)},
  MRNUMBER = {3728622},
MRREVIEWER = {Eugenijus\ Manstavi\v cius},
}

@article {Bankshortintervals,
    AUTHOR = {Bank, Efrat and Bary-Soroker, Lior and Rosenzweig, Lior},
     TITLE = {Prime polynomials in short intervals and in arithmetic
              progressions},
   JOURNAL = {Duke Math. J.},
  FJOURNAL = {Duke Mathematical Journal},
    VOLUME = {164},
      YEAR = {2015},
    NUMBER = {2},
     PAGES = {277--295},
      ISSN = {0012-7094,1547-7398},
   MRCLASS = {11T06 (11N05)},
  MRNUMBER = {3306556},
MRREVIEWER = {Greg\ Martin},
       DOI = {10.1215/00127094-2856728},
       URL = {https://doi.org/10.1215/00127094-2856728},
}

@article {Keatingshortintervals,
    AUTHOR = {Keating, Jonathan and Rudnick, Zeev},
     TITLE = {Squarefree polynomials and {M}\"obius values in short
              intervals and arithmetic progressions},
   JOURNAL = {Algebra Number Theory},
  FJOURNAL = {Algebra \& Number Theory},
    VOLUME = {10},
      YEAR = {2016},
    NUMBER = {2},
     PAGES = {375--420},
      ISSN = {1937-0652,1944-7833},
   MRCLASS = {11T55 (11M38 11M50)},
  MRNUMBER = {3477745},
       DOI = {10.2140/ant.2016.10.375},
       URL = {https://doi.org/10.2140/ant.2016.10.375},
}

@article {KeatingRudnickshortintervals,
    AUTHOR = {Keating, Jonathan P. and Rudnick, Ze\'ev},
     TITLE = {The variance of the number of prime polynomials in short
              intervals and in residue classes},
   JOURNAL = {Int. Math. Res. Not. IMRN},
  FJOURNAL = {International Mathematics Research Notices. IMRN},
      YEAR = {2014},
    VOLUME = {2014},
    NUMBER = {1},
     PAGES = {259--288},
      ISSN = {1073-7928,1687-0247},
   MRCLASS = {11N13 (11R58)},
  MRNUMBER = {3158533},
MRREVIEWER = {Volker\ Ziegler},
       DOI = {10.1093/imrn/rns220},
       URL = {https://doi.org/10.1093/imrn/rns220},
}

@article {Ofir-kprimes,
    AUTHOR = {Elboim, Dor and Gorodetsky, Ofir},
     TITLE = {Uniform estimates for almost primes over finite fields},
   JOURNAL = {Proc. Amer. Math. Soc.},
  FJOURNAL = {Proceedings of the American Mathematical Society},
    VOLUME = {150},
      YEAR = {2022},
    NUMBER = {7},
     PAGES = {2807--2822},
      ISSN = {0002-9939,1088-6826},
   MRCLASS = {11K65 (05A05 11T06)},
  MRNUMBER = {4428869},
MRREVIEWER = {Olivier\ Bordell\`es},
       DOI = {10.1090/proc/15870},
       URL = {https://doi.org/10.1090/proc/15870},
}

@article {HWANG,
    AUTHOR = {Hwang, Hsien-Kuei},
     TITLE = {Asymptotic expansions for the {S}tirling numbers of the first
              kind},
   JOURNAL = {J. Combin. Theory Ser. A},
  FJOURNAL = {Journal of Combinatorial Theory. Series A},
    VOLUME = {71},
      YEAR = {1995},
    NUMBER = {2},
     PAGES = {343--351},
      ISSN = {0097-3165,1096-0899},
   MRCLASS = {11B73 (05A16)},
  MRNUMBER = {1342456},
MRREVIEWER = {J.\ S\'andor},
       DOI = {10.1016/0097-3165(95)90010-1},
       URL = {https://doi.org/10.1016/0097-3165(95)90010-1},
}

@article {Wintner-1944,
    AUTHOR = {Wintner, Aurel},
     TITLE = {Random factorizations and {R}iemann's hypothesis},
   JOURNAL = {Duke Math. J.},
  FJOURNAL = {Duke Mathematical Journal},
    VOLUME = {11},
      YEAR = {1944},
     PAGES = {267--275},
      ISSN = {0012-7094,1547-7398},
   MRCLASS = {10.0X},
  MRNUMBER = {10160},
MRREVIEWER = {S.\ Bochner},
       URL = {http://projecteuclid.org/euclid.dmj/1077472524},
}

@article {Harper-2020,
    AUTHOR = {Harper, Adam J.},
     TITLE = {Moments of random multiplicative functions, {I}: {L}ow
              moments, better than squareroot cancellation, and critical
              multiplicative chaos},
   JOURNAL = {Forum Math. Pi},
  FJOURNAL = {Forum of Mathematics. Pi},
    VOLUME = {8},
      YEAR = {2020},
     PAGES = {e1, 95},
      ISSN = {2050-5086},
   MRCLASS = {11N37 (11K65 11L40 60G15 60G57)},
  MRNUMBER = {4061962},
MRREVIEWER = {Filip\ Saidak},
       DOI = {10.1017/fmp.2019.7},
       URL = {https://doi.org/10.1017/fmp.2019.7},
}

@article {Hough-2011,
    AUTHOR = {Hough, Bob},
     TITLE = {Summation of a random multiplicative function on numbers
              having few prime factors},
   JOURNAL = {Math. Proc. Cambridge Philos. Soc.},
  FJOURNAL = {Mathematical Proceedings of the Cambridge Philosophical
              Society},
    VOLUME = {150},
      YEAR = {2011},
    NUMBER = {2},
     PAGES = {193--214},
      ISSN = {0305-0041,1469-8064},
   MRCLASS = {11N60 (11K65)},
  MRNUMBER = {2770059},
MRREVIEWER = {Antanas\ Laurin\v cikas},
       DOI = {10.1017/S0305004110000514},
       URL = {https://doi.org/10.1017/S0305004110000514},
}

@article {Chatt-Sound-2012,
    AUTHOR = {Chatterjee, Sourav and Soundararajan, Kannan},
     TITLE = {Random multiplicative functions in short intervals},
   JOURNAL = {Int. Math. Res. Not. IMRN},
  FJOURNAL = {International Mathematics Research Notices. IMRN},
      YEAR = {2012},
    VOLUME = {2012},
    NUMBER = {3},
     PAGES = {479--492},
      ISSN = {1073-7928,1687-0247},
   MRCLASS = {11N60},
  MRNUMBER = {2885978},
MRREVIEWER = {Robert\ Daniel\ Hough},
       DOI = {10.1093/imrn/rnr023},
       URL = {https://doi.org/10.1093/imrn/rnr023},
}

@incollection {GranvilleSmooth2008,
    AUTHOR = {Granville, Andrew},
     TITLE = {Smooth numbers: computational number theory and beyond},
 BOOKTITLE = {Algorithmic number theory: lattices, number fields, curves and
              cryptography},
    SERIES = {Math. Sci. Res. Inst. Publ.},
    VOLUME = {44},
     PAGES = {267--323},
 PUBLISHER = {Cambridge Univ. Press, Cambridge},
      YEAR = {2008},
      ISBN = {978-0-521-80854-5},
   MRCLASS = {11Y35 (11P05 11R47 11Y16 11Y60 11Y70)},
  MRNUMBER = {2467549},
MRREVIEWER = {Ra\'ul\ Dur\'an D\'iaz},
}

@article {Hildebrand1984,
    AUTHOR = {Hildebrand, Adolf},
     TITLE = {Integers free of large prime factors and the {R}iemann
              hypothesis},
   JOURNAL = {Mathematika},
  FJOURNAL = {Mathematika. A Journal of Pure and Applied Mathematics},
    VOLUME = {31},
      YEAR = {1984},
    NUMBER = {2},
     PAGES = {258--271},
      ISSN = {0025-5793},
   MRCLASS = {11N25 (11M26)},
  MRNUMBER = {804201},
MRREVIEWER = {Aleksandar\ Ivi\'c},
       DOI = {10.1112/S0025579300012481},
       URL = {https://doi.org/10.1112/S0025579300012481},
}

@misc{MeraiPrescribed2024,
       AUTHOR = {László Mérai},
        TITLE = {Smooth polynomials with several prescribed coefficients},
      JOURNAL = {},
     FJOURNAL = {},
       VOLUME = {},
         YEAR = {2024},
        PAGES = {},
         ISSN = {},
      MRCLASS = {},
     MRNUMBER = {},
   MRREVIEWER = {},
       EPRINT = {2307.10280},
ARCHIVEPREFIX = {arXiv},
 PRIMARYCLASS = {math.NT},
          URL = {https://arxiv.org/abs/2307.10280},
          DOI = {10.48550/arXiv.2307.10280},
 HOWPUBLISHED = "\url{https://arxiv.org/abs/2307.10280}"
}

@article {Gran-Harp-Sound-2015,
    AUTHOR = {Granville, Andrew and Harper, Adam J. and Soundararajan,
              Kannan},
     TITLE = {Mean values of multiplicative functions over function fields},
   JOURNAL = {Res. Number Theory},
  FJOURNAL = {Research in Number Theory},
    VOLUME = {1},
      YEAR = {2015},
     PAGES = {Paper No. 25, 18},
      ISSN = {2522-0160,2363-9555},
   MRCLASS = {11T55 (11M38)},
  MRNUMBER = {3501009},
       DOI = {10.1007/s40993-015-0023-5},
       URL = {https://doi.org/10.1007/s40993-015-0023-5},
}

@article {Shiu-1980,
    AUTHOR = {Shiu, P.},
     TITLE = {A {B}run-{T}itchmarsh theorem for multiplicative functions},
   JOURNAL = {J. Reine Angew. Math.},
  FJOURNAL = {Journal f\"ur die Reine und Angewandte Mathematik. [Crelle's
              Journal]},
    VOLUME = {313},
      YEAR = {1980},
     PAGES = {161--170},
      ISSN = {0075-4102,1435-5345},
   MRCLASS = {10H25},
  MRNUMBER = {552470},
MRREVIEWER = {A.\ I.\ Vinogradov},
       DOI = {10.1515/crll.1980.313.161},
       URL = {https://doi.org/10.1515/crll.1980.313.161},
}

@article {OfirSmoothPolys23,
    AUTHOR = {Gorodetsky, Ofir},
     TITLE = {Uniform estimates for smooth polynomials over finite fields},
   JOURNAL = {Discrete Anal.},
  FJOURNAL = {Discrete Analysis},
      YEAR = {2023},
     PAGES = {Paper No. 16, 31},
      ISSN = {2397-3129},
   MRCLASS = {11N25 (11R58 11T06)},
  MRNUMBER = {4653669},
MRREVIEWER = {Adam\ J.\ Harper},
}
\vspace*{-30pt}
\parindent0pt
\end{document}